\newtheorem{Thm}{Theorem}[section]
\newtheorem{theorem}[Thm]{Theorem}
\newtheorem{definition}[Thm]{Definition}
\newtheorem{lemma}[Thm]{Lemma}
\newtheorem{corollary}[Thm]{Corollary}
\newtheorem{proposition}[Thm]{Proposition}
\newtheorem{remark}[Thm]{Remark}
\def\E{{\mathbb{E}}}
\def\P{{\mathbb{P}}}
\def\b{{\mathfrak{b}}}
\def\dd{{\mathfrak{d}}}
\def\pp{{\mathbf{p}}}
\def\qq{{\mathbf{q}}}
\title{Tail distributions of cover times of once-reinforced random walks
	\footnote{Y. Liu is supported by CNNSF (No. 11731009, No.11926327, No. 12231002) and Center for Statistical Science, PKU.
		K. Xiang is supported by CNNSF (No.~12171410) and by Hu Xiang Gao Ceng Ci Ren Cai Ju Jiao Gong Cheng-Chuang Xin Ren Cai (No. 2019RS1057).}}
\author{
	Xiangyu Huang\footnote{YMSC, Tsinghua University, Beijing 100084, China. \emph{xiangyuhuang077@gmail.com}}
	\and
	Yong Liu\footnote{LMAM, School of Mathematical Sciences, Peking University, Beijing 100871, China. \emph{liuyong@math.pku.edu.cn}}
	\and
	Kainan Xiang\footnote{School of Mathematics and Computational Science, Xiangtan University, Xiangtan City 411105, Hunan Province, China. \emph{kainan.xiang@xtu.edu.cn}}}
\date{}
\begin{document}

\maketitle
\begin{abstract}
	We consider the tail distribution of the edge cover time of a specific non-Markov process, 
	$\delta$ once-reinforced random walk, on finite connected graphs, whose transition probability is proportional to weights of edges. Here the weights are $1$ on edges not traversed and $\delta\in(0,\infty)$ otherwise.
	In detail, we show that its tail distribution decays exponentially, and obtain a phase transition of the exponential integrability of the edge cover time with critical exponent $\alpha_c^1(\delta)$, which has a variational representation and some interesting analytic properties including $\alpha_c^1(0+)$ reflecting the graph structures.\\

	\noindent \textbf{Key words}: Once-reinforced random walk; Cover time;Exponential integrability; Critical exponent.
\end{abstract}

	
	\section{Introduction}\label{sec 1}
\setcounter{equation}{0}

	\noindent For all connected locally finite undirected graphs $G=(V,E)$\label{G=(V,E)} with vertex set $V$ and edge set $E$, we write $u\sim v$\label{u sim v} if $u,v\in V$ are adjacent, and denote the corresponding edge by $uv=\{u,v\}$. For all subgraphs $G'=(V',E')\subseteq G$, we set $\partial E'$ to be the set of all edges $e'\in E'$ such that there is some $e\in E\setminus E'$ adjacent to $e'$.
	For each discrete stochastic process moving on the graph $G$, we define the edge cover time $C_E$ to be the stopping time that the process visits all edges at least once. Precisely, for any discrete process $(X_t)_{t\ge 0}$, set
	$$C_E=\inf\{t:\ \forall e\in E,\ \exists\,s\le t,\ X_{s-1}X_{s}=e \}.$$\label{cover time}
	Similarly we define the vertex cover time $C_V$ to be the stopping time for process covering all vertices. In literature, the mean cover time $t_{\rm cov}:=\max_{v\in V}\mathbb{E}_v C_V$ are always considered, where $\mathbb{E}_v$ is the expectation conditioned on process starting at $v$. For the relevant background material, please refer to survey \cite{L1993} and books \cite{AF2002, LPW2009}. Cover time is also useful in the study of computer science, due to its applications to designing universal traversal sequences, testing graph connectivity shown in \cite{AKLLR1979}, and protocol testing shown in \cite{MP1994}.
	
	There are many results on cover times of simple random walks (SRWs) in literatures. One important result was given by \cite{DLP2012}. They gave a sharp estimate for the mean cover time $t_{\rm cov}$ by the relationship between the discrete Gaussian free field and the SRW. According to this relationship, \cite{Z2018} proved the exponential decay of the tail distribution of vertex cover time for SRWs. For SRWs on $d$-dimensional torus, for $d=2$, \cite{CGP2013} gave a large deviation principle (LDP) result of the vertex cover time, and recently for $d\ge3$, \cite{LSX2024} also proved an LDP result by the relevance to random interlacements. \cite{DK2025} also showed an exponentially decay of the vertex cover time of SRWs on every finite graph. For some other graphs, \cite{M2024} showed an up to constant estimate of mean cover time $t_{\rm cov}$ for SRWs on dynamical percolation on $d$-dimensional torus in sub-critical case, using some results in \cite{PSS2015}. \cite{R2022} showed a suprising limit result on mean cover time $t_{\rm cov}$ for branching random walk on $d$-regular trees for $d\ge3$. On $b$-ary trees, \cite{A1991} analyzed the asymptotic behavior of $t_{\rm cov}$ of SRWs, and \cite{DRZ2021} observed the convergence in distribution of the vertex cover time of SRWs specifically for $b=2$.
	
 In this paper, we consider the once-reinforced random walk (ORRW) $X=(X_n)_{n\geq 0}$\label{X=(X_n)_{n>0}} on $G$, introduced by \cite{Dav1990}, which is a variation of the edge reinforced random walk (ERRW). The transition probability of ORRWs depends on the weight function defined on edges, where the weight on every edge at time $n$ is $1$ if this edge has not been traversed by the process, and $\delta$ otherwise. Here $\delta$ is a positive parameter called the reinforcement factor.
 The ORRW fits in a large family of self-interacting random walks, the study of which is very challenging due to the dependence on the whole past trajectory. Compared with the original model ERRW, the ORRW has a simplified weight function. However, the study of ORRW is suprisingly more challenging than other ERRWs. Up to now, even the most basic question, recurrence/transience, has no complete result on graphs with circles. For some relevant results, refer to \cite{Se2006, KS2018, CKS2017, HLSX2021, KSS2018}.

We aim to know more about the long time behaviors of ORRWs.
This paper is devoted to studying the critical exponent for exponential integrability of some stopping times of ORRWs, including the edge cover time of ORRWs on finite connected graphs. To the best of our knowledge, this paper is the first one considering the cover time of non-Markov process.
We always assume $\vert V\vert\geq 3$ in this paper due to the fact that $C_E\equiv 1$ for $|V|=2$. Set $\mathbb{P}_{x_0}$ to be the probability under the condition that the starting point $x_0$ is in $V$.
Our main results are summarized as follows:

\begin{theorem}[Critical exponent for exponential integrability of $C_E$]\label{Mthm C_E}
	$\mathbb{P}_{x_0}\left(C_E>n\right)$ decays exponentially with the rate $\alpha^1_c(\delta)\in (0,\infty)$, i.e.,
	\begin{equation}\label{exp-decay CE1}
		\lim_{n\to\infty}\frac{1}{n}\log \mathbb{P}_{x_0}\left(C_E>n\right)=-\alpha^1_c(\delta).
	\end{equation}
	More precisely,
	\begin{equation}\label{more alpha_c}
		0<\liminf_{n\to\infty}e^{n\alpha^1_c(\delta)}\mathbb{P}_{x_0} (C_E>n)\le \limsup_{n\to\infty}e^{n\alpha^1_c(\delta)}\mathbb{P}_{x_0} (C_E>n)<\infty.
	\end{equation}
	And $\alpha_c^1(\delta)$ is the critical exponent for exponential integrability of $C_E$ in the sense that
	$$
	\mathbb{E}_{x_0}\left[ e^{\alpha C_E} \right]<\infty \ \mbox{if}\ \alpha<\alpha^1_c(\delta),\ \mbox{and}\
	\mathbb{E}_{x_0}\left[ e^{\alpha C_E} \right]=\infty\ \mbox{if}\  \alpha\ge\alpha^1_c(\delta).
	$$
\end{theorem}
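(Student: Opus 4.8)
The plan is to pass to an auxiliary finite Markov chain and run a Perron--Frobenius analysis adapted to the monotone structure of the ORRW. Although $(X_n)$ itself is not Markov, the pair $Y_n:=(X_n,\mathcal A_n)$, where $\mathcal A_n:=\{e\in E:\exists\,s\le n,\ X_{s-1}X_s=e\}$ is the set of edges already traversed, is a time-homogeneous Markov chain on the finite space $V\times 2^E$, and $C_E=\inf\{n:\mathcal A_n=E\}$ is its absorption time into $\{(v,E):v\in V\}$. The key extra structure is that $\mathcal A_n$ is non-decreasing, taking values along a (random) strictly increasing chain of edge-sets, and on the event $\{\mathcal A_m=A\}$ for a fixed $A\subsetneq E$ the weights are frozen ($\delta$ on $A$, $1$ elsewhere), so the position moves by a sub-Markov matrix $Q_A$ on $V(A)$ depending only on $A$. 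This $Q_A$ is reversible with respect to $\pi_A(x):=\delta\,d_A(x)+\bigl(d(x)-d_A(x)\bigr)$ (with $d_A$ the degree in $(V(A),A)$) and, since $(V(A),A)$ is connected, irreducible; hence $\rho(A):=\rho(Q_A)$ is a simple Perron eigenvalue, and $\rho(A)\in(0,1)$ whenever $A$ is transient, because some boundary vertex of $A$ gives a strictly sub-stochastic row. Call $A$ \emph{reachable} if $\mathbb{P}_{x_0}(\mathcal A_n=A\text{ for some }n)>0$; I set
$$\rho_{\max}:=\max\{\rho(A):A\ \text{reachable and transient}\}\in(0,1),\qquad \alpha^1_c(\delta):=-\log\rho_{\max}\in(0,\infty).$$
(Equivalently, $\rho_{\max}$ is the spectral radius of the sub-stochastic part of $Y$, which is block upper-triangular along the configuration order with diagonal blocks $Q_A$.)

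\textbf{Step 2: variational formula and the monotonicity lemma.} Conjugating $Q_A$ to a symmetric matrix via $\pi_A$ yields the Dirichlet-type identity
$$1-\rho(A)=\min_{\psi\not\equiv 0}\ \frac{\delta\sum_{xy\in A}\bigl(\psi(x)-\psi(y)\bigr)^2+\sum_{x}\bigl(d(x)-d_A(x)\bigr)\psi(x)^2}{\sum_x\pi_A(x)\,\psi(x)^2},$$
which is the source of the variational representation of $\alpha^1_c(\delta)$. The crux is the \emph{Monotonicity Lemma: if $A\subsetneq A'$ are both reachable and transient, then $\rho(A)<\rho(A')$.} I would prove this by plugging the strictly positive Perron eigenfunction $\psi^*_A$ of $A$ into the variational problem for $A'=A\cup\{e\}$: if $e=uw$ with $w\notin V(A)$ one extends by $\psi^*_A(w):=\psi^*_A(u)$, which kills the new gradient term while the extra safe weight at $u$ strictly decreases the Rayleigh quotient; if $e=uw$ is a chord inside $V(A)$ the added gradient term $\delta\bigl(\psi^*_A(u)-\psi^*_A(w)\bigr)^2$ must be controlled against the weight gained in the denominator, using positivity of $\psi^*_A$. \textbf{Establishing this last estimate uniformly over all finite graphs and all $\delta\in(0,\infty)$ --- the chord case with $\delta>1$ is the delicate one --- is the main obstacle}; given the Lemma the remaining steps are essentially Perron--Frobenius bookkeeping, because the Lemma guarantees that configurations attaining $\rho_{\max}$ form an antichain, so the sub-stochastic part of $Y$ has no Jordan block at $\rho_{\max}$.

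\textbf{Step 3: the two-sided bound \eqref{more alpha_c}.} For the lower bound, fix a reachable transient $A^*$ with $\rho(A^*)=\rho_{\max}$; since it is reachable, $x_0\in V(A^*)$, and by following a fixed edge-walk covering $(V(A^*),A^*)$ the walk attains configuration $A^*$ within $2|E|$ steps with probability $p^*>0$. From any $v\in V(A^*)$,
$$\mathbb{P}_v\bigl(\text{configuration stays }A^*\text{ for }k\text{ more steps}\bigr)=\bigl(Q_{A^*}^k\mathbf{1}\bigr)_v\ \ge\ \rho_{\max}^{\,k}\,\frac{\min\phi^*}{\|\phi^*\|_\infty},$$
where $\phi^*>0$ is the Perron eigenfunction of $Q_{A^*}$ (using $\mathbf{1}\ge\phi^*/\|\phi^*\|_\infty$ and $Q_{A^*}^k\phi^*=\rho_{\max}^k\phi^*$); hence $\mathbb{P}_{x_0}(C_E>n)\ge c\,\rho_{\max}^{\,n}$, so $\liminf_n e^{n\alpha^1_c(\delta)}\mathbb{P}_{x_0}(C_E>n)>0$. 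For the upper bound, write $\mathbb{P}_{x_0}(C_E>n)=\sum_{A}\mathbb{P}_{x_0}(\mathcal A_n=A)$ over reachable transient $A$; splitting on the last time $s$ at which the configuration became $A$,
$$\mathbb{P}_{x_0}(\mathcal A_n=A)\ \le\ \sum_{s\le n}h_A(s)\,\sup_{v\in V(A)}\mathbb{P}_v\bigl(\text{stays }A\text{ for }n-s\text{ steps}\bigr)\ \le\ C_A\sum_{s\le n}h_A(s)\,\rho(A)^{\,n-s},$$
where $h_A(s)$ is the probability that the configuration first equals $A$ at time $s$. Decomposing $h_A(s)$ over the finitely many increasing chains $\emptyset\subsetneq\cdots\subsetneq A$ and bounding the time in each intermediate configuration $B\subsetneq A$ by the exit time of $Q_B$, the Monotonicity Lemma gives $h_A(s)\le C_A'\,s^{|E|}\,(\rho^*_A)^s$ with $\rho^*_A:=\max_{B\subsetneq A,\ \text{reach. trans.}}\rho(B)<\rho(A)$; hence $\sum_s h_A(s)\rho(A)^{-s}<\infty$ and $\mathbb{P}_{x_0}(\mathcal A_n=A)\le C_A''\rho(A)^n\le C_A''\rho_{\max}^{\,n}$. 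Summing the finitely many $A$ gives $\mathbb{P}_{x_0}(C_E>n)\le C\rho_{\max}^{\,n}$, so $\limsup_n e^{n\alpha^1_c(\delta)}\mathbb{P}_{x_0}(C_E>n)<\infty$.

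\textbf{Step 4: conclusions.} Equation \eqref{exp-decay CE1} is immediate from \eqref{more alpha_c}, since $\tfrac1n\log\mathbb{P}_{x_0}(C_E>n)=-\alpha^1_c(\delta)+\tfrac1n\log\bigl(\rho_{\max}^{-n}\mathbb{P}_{x_0}(C_E>n)\bigr)$ and the bracketed factor is eventually bounded above and below by positive constants. For the integrability threshold, since $n\mapsto\mathbb{P}_{x_0}(C_E>n)$ is non-increasing with $\mathbb{P}_{x_0}(C_E>n)\asymp\rho_{\max}^{\,n}$, summation by parts gives $\mathbb{E}_{x_0}[e^{\alpha C_E}]<\infty\iff\sum_n e^{\alpha n}\mathbb{P}_{x_0}(C_E>n)<\infty\iff e^{\alpha}\rho_{\max}<1\iff\alpha<\alpha^1_c(\delta)$; and for $\alpha\ge\alpha^1_c(\delta)$ the lower bound $\mathbb{P}_{x_0}(C_E>n)\ge c\,\rho_{\max}^{\,n}$ forces $e^{\alpha n}\mathbb{P}_{x_0}(C_E>n)\ge c$, whence $\mathbb{E}_{x_0}[e^{\alpha C_E}]=\infty$.
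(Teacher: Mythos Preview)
Your approach—pass to the finite Markov chain $(X_n,\mathcal A_n)$ and run Perron--Frobenius on each frozen configuration—is natural and genuinely different from the paper's, which obtains the rate via a restricted Laplace principle on a lifted directed graph (Theorem~\ref{estimate for rate}) and then proves the two-sided bound by a renewal-time decomposition combined with Lemma~\ref{order of Markov process}. Your lower bound in Step~3 is correct, and with it your derivations of \eqref{exp-decay CE1} and of the full integrability dichotomy (including $\alpha=\alpha_c^1$) go through. The real gap is the Monotonicity Lemma, on which your proof of the sharp bound \eqref{more alpha_c} rests: it is \emph{false}, already in the pendant case you describe as routine. Take $G$ to be the triangle on $\{1,2,3\}$ together with pendant edges $34,35,\ldots,3k$, start at $x_0=1$, and compare $A_0=\{12\}$ with $A=A_0\cup\{23\}$, a pendant extension at $u=2$ to $w=3$. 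A direct computation gives $\rho(A_0)=\delta/(1+\delta)$ while $\rho(A)^2=\tfrac{\delta}{2(1+\delta)}+\tfrac{\delta}{2(\delta+k-2)}$; for $\delta=3$ and any $k\ge 8$ one gets $\rho(A)<\rho(A_0)$ (e.g.\ $k=10$: $\rho(A_0)=0.75$, $\rho(A)\approx 0.715$). The mechanism defeats your test function: extending $\psi^*_{A_0}$ by $\psi(w):=\psi(u)$ does kill the new gradient term, but it inserts the boundary contribution $(d(w)-1)\psi(w)^2$ into the numerator, and when $d(w)$ is large this overwhelms the extra safe weight in the denominator, so the Rayleigh quotient \emph{increases} rather than decreases.

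Without monotonicity, your Step~3 upper bound degrades to $\mathbb{P}_{x_0}(C_E>n)\le C\,n^{|E|}\rho_{\max}^{\,n}$ (bound every intermediate $\rho(B)$ by $\rho_{\max}$ and count compositions), which is enough for \eqref{exp-decay CE1} and the exponential integrability threshold but not for $\limsup_n e^{n\alpha_c^1}\mathbb{P}_{x_0}(C_E>n)<\infty$. That last statement is equivalent to the absence of a nontrivial Jordan block at $\rho_{\max}$ for the block-upper-triangular sub-stochastic matrix of $(X_n,\mathcal A_n)$, and once strict monotonicity fails you no longer have the antichain structure needed to rule such a block out; a different idea is required here.
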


Theorem \ref{Mthm C_E} is a corollary of Theorem \ref{m-exp decay}. For more details of $\alpha_c^1$, see Theorem \ref{m-exp decay} and \eqref{eq-alpha-1}.

\begin{theorem}[Analytic property and asymptotic behaviour of $\alpha^1_c(\delta)$]
	\label{Mthm C_Edelta}
	$\mathbf {(a)}$  $\alpha_c^1(\delta)$ is continuous and strictly decreasing in $\delta>0$ and uniformly continuous in $\delta\ge\delta_0$ for any $\delta_0>0$.
	$\mathbf {(b)}$ $\lim\limits_{\delta\to \infty}\alpha_c^1(\delta)=0$. $\mathbf{(c)}$ $\lim\limits_{\delta\to 0+}\alpha_c^1(\delta)=\infty$ when $G$ is a 3-vertex connected graph or a star-shaped graph, and $\lim\limits_{\delta\to 0+}\alpha_c^1(\delta)<\infty$ otherwise.
\end{theorem}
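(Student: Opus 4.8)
The plan is to deduce all three parts from the variational representation \eqref{eq-alpha-1} of $\alpha^1_c(\delta)$ provided by Theorem~\ref{m-exp decay}, together with a handful of explicit lower bounds for $\mathbb P_{x_0}(C_E>n)$ obtained from carefully chosen trajectories. Recall that \eqref{eq-alpha-1} exhibits $\alpha^1_c(\delta)$ as the minimum, over the finitely many ``confinement patterns'' of the walk — roughly, a choice of one edge $e$ to be left uncovered together with a set of already-traversed edges to which the walk is confined — of an exponential escape rate that is an explicit, real-analytic function of $\delta$.

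$\mathbf{(a)}$ Fix a confinement pattern. Raising $\delta$ raises the conductance of every traversed edge while the edges leaving the confinement region keep conductance $1$, so the sub-stochastic kernel of the confined walk has ``escape'' entries non-increasing in $\delta$; a Dirichlet-form / largest-eigenvalue comparison then shows each pattern's escape rate is non-increasing in $\delta$, and strictly so for the optimal pattern, which must retain a traversed edge (otherwise $e$ could never be avoided). Being a minimum over finitely many such real-analytic functions, $\alpha^1_c(\delta)$ is continuous and strictly decreasing. Finally, a continuous non-increasing function on $[\delta_0,\infty)$ with a finite limit at $+\infty$ — furnished by part $\mathbf{(b)}$ — extends continuously to the compact interval $[\delta_0,+\infty]$ and is therefore uniformly continuous there.

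$\mathbf{(b)}$ Since $|V|\ge 3$, choose a neighbour $v$ of $x_0$, set $e_0=x_0v$ and $D=\max_u\deg(u)$. On the event that the walk steps to $v$ and then oscillates along $e_0$ for the remaining $n-1$ steps, at least one edge stays uncovered, so
\begin{equation*}
\mathbb P_{x_0}(C_E>n)\ \ge\ \Big(\tfrac{\delta}{\delta+D}\Big)^{\,n},
\end{equation*}
whence $\alpha^1_c(\delta)\le\log(1+D/\delta)\to 0$; with $\alpha^1_c(\delta)>0$ this is $\mathbf{(b)}$. The finite half of $\mathbf{(c)}$ produces a $\delta$-independent rate in the same spirit. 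Say $G$ has a \emph{lingering region}: an edge $e=uv$ with $G\setminus e$ still connecting $x_0$ to every edge of $E\setminus\{e\}$, and two vertices $p\sim q$ with $p,q\notin\{u,v\}$ in the $x_0$-component of $G\setminus e$. For each fixed $\delta>0$ the walk reaches, with positive probability, the configuration in which exactly $e$ is untraversed and $X=p$; thereafter, as long as $e$ is untraversed the walk runs simple random walk on the traversed edges (all of conductance $\delta$) and covers $e$ only upon reaching $\{u,v\}$, with probability $1-O(\delta)$ per visit there. Bounding below by the event that this SRW avoids $\{u,v\}$ for $n$ further steps gives $\mathbb P_{x_0}(C_E>n)\ge c_\delta\,e^{-\lambda n}$ with $\lambda$ the finite, $\delta$-independent escape rate of that SRW (finite precisely because $p\sim q$), hence $\limsup_{\delta\to0+}\alpha^1_c(\delta)\le\lambda<\infty$.

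It remains to treat graphs with no lingering region, which a case analysis (on whether $G$ is a tree of diameter $\le 2$, a tree of diameter $\ge 3$, or contains a cycle) identifies as precisely the connected graphs on three vertices and the stars $K_{1,m}$. For such $G$, at every time after the first step the current vertex either has an incident untraversed edge, which is then followed with conditional probability $\ge 1-C\delta$ (covering a new edge), or has all incident edges already traversed, in which case within at most $\mathrm{diam}(G)\le 2$ further steps the walk reaches a vertex of the first type with probability $1-O(\delta)$. Since only $|E|$ fresh traversals are needed to finish, $\{C_E>n\}$ forces at least $c\,n$ ``wasteful'' events, each of conditional probability $\le C\delta$, so $\mathbb P_{x_0}(C_E>n)\le\mathrm{poly}(n)\,(C\delta)^{c\,n}$ for some $c=c(G)>0$, giving $\alpha^1_c(\delta)\ge c\log\tfrac1{C\delta}-o(1)\to\infty$; equivalently, in \eqref{eq-alpha-1} every confinement pattern forces the walk onto a conductance-$1$ edge within a bounded number of steps, so each pattern's rate diverges as $\delta\to0+$. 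The main obstacle is this final step: the graph-theoretic dichotomy — lingering region exists versus $G$ is a three-vertex graph or a star — is exhaustive only after a somewhat delicate case analysis, including low-degree cases such as a triangle with one pendant edge, and one must verify that the ``reach the lingering configuration, then run SRW'' lower bound is genuinely uniform as $\delta\to0+$; the remaining ingredients are routine once \eqref{eq-alpha-1} is available.
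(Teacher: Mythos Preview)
Your approach is valid but takes a route genuinely different from the paper's. The paper does not argue directly about $\alpha^1_c$ at all: it first proves a general Theorem~\ref{thm_prop_alpha_c} for $\alpha_c(\delta)$ associated with an arbitrary decreasing family $\mathscr{S}_0$, using only the variational formulas \eqref{exprs1 alpha_c}--\eqref{exprs2  alpha_c}. Monotonicity and continuity come from explicit relative-entropy difference bounds (yielding $|\alpha_c(\delta_1)-\alpha_c(\delta_2)|\le\log(\delta_2/\delta_1)$), and the $\delta\to0$ dichotomy in part~(c) is obtained via Proposition~\ref{property of alpha_c 2} by a perturbation argument on invariant measures showing that the minimizing $\mu$ must charge $\partial E_0$. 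Theorem~\ref{Mthm C_Edelta} then reduces to checking that the abstract condition ``$E_e\subseteq E_0$ for some $E_0\in\mathscr{S}_1$'' fails precisely for stars and $3$-vertex graphs, which the paper does by a short case analysis on the degrees of the endpoints of an edge through $x_0$.

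Your route instead reinterprets the inner infimum in \eqref{eq-alpha-1} (for fixed $E_0$) as a Perron--Frobenius eigenvalue, deduces monotonicity and analyticity spectrally, and handles~(c) with explicit trajectory lower and upper bounds (the ``lingering region'' SRW-avoidance bound for the finite case, and the ``each step wastes with probability $O(\delta)$'' bound for stars/triangles). This is more hands-on and arguably more transparent probabilistically; the paper's route is more systematic and transfers verbatim to every stopping time $\mathcal{T}_{\mathscr{S}_0}$. Two points in your sketch would benefit from being made explicit: (i) the identification of the infimum over $(\nu,\hat q)$ with $-\log\lambda_{\max}$ of the sub-stochastic kernel $\hat p_{E_0}\vert_{E_0}$ is being used but not stated; (ii) strict monotonicity relies on the Perron eigenvector being strictly positive on $V_0$, which holds because each $E_0\in\mathscr{S}_1$ induces a connected subgraph --- your parenthetical ``otherwise $e$ could never be avoided'' does not quite capture this. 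The graph-theoretic dichotomy you need is the same as the paper's, and your case split (tree of small/large diameter versus contains a cycle) does produce it, though the dependence on $x_0$ in your ``lingering region'' definition should be checked explicitly for bridge edges.
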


The proof of Theorem \ref{Mthm C_Edelta} is stated in Section \ref{sec 4.1}.

	\begin{figure}[htbp]
	\subfigure[]{\begin{minipage}[t]{0.5\textwidth}
			\centering{\includegraphics[scale=0.5]{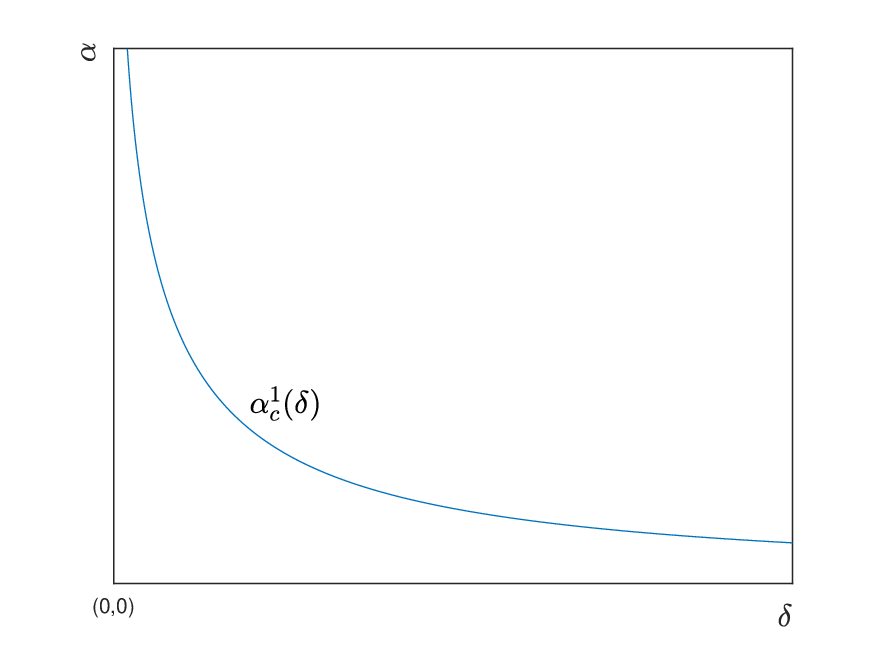}}
	\end{minipage}}
	\subfigure[]{\begin{minipage}[t]{0.5\textwidth}
			\centering{\includegraphics[scale=0.5]{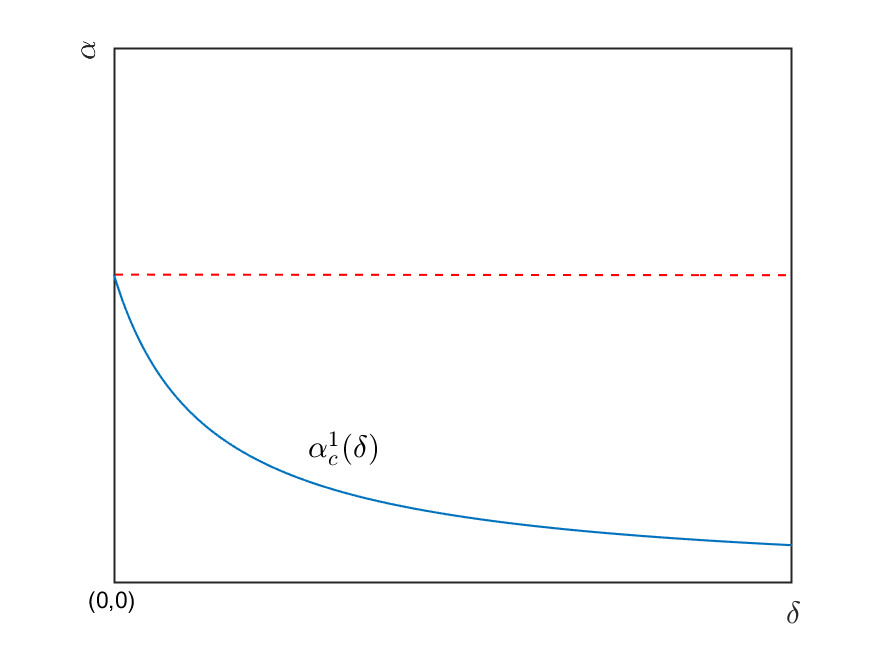}}
	\end{minipage}}
	\caption{\small{This is a sketch map of $\alpha_c^1(\cdot)$ for the ORRW on finite connected graphs. The left picture stands for $\alpha_c^1(\cdot)$ on  $3$-vertex connected graph or star-shaped graph. In this picture, $\alpha_c^1(\delta)$ converges to $\infty$ and $0$ as $\delta$ approaches $0$ and $\infty$ respectively.
			The right picture stands for $\alpha_c^1(\cdot)$ on other graphs.
			Here, $\alpha_c^1(\delta)$ converges to a positive number
			as $\delta$ approaches $0$, and converges to $0$ as $\delta$ approaches $\infty$.
	} }
	\label{phase line 0}
\end{figure}

Note that the edge cover time $C_E$ is a.s. finite on finite connected graphs, 
and that the $\delta$-ORRW becomes an SRW on $G$ after traversing all edges. These observations strongly suggest that long time behaviors of ORRWs should be the same as those of SRWs. However, on finite connected graphs, the LDP for empirical measures of ORRWs is different from that of the SRWs, see \cite{HLX2022}. Therefore, to understand the influence to the LDP, it is natural and interesting to study the edge cover time.

The model we consider, i.e., ORRWs, is a non-Markovian process. Due to the loss of Markov property, the methods in the literature on cover times of SRWs do not work well on the ORRWs. In this paper, we use the general LDP result of ORRWs shown in \cite{HLX2022} to study the tail distribution of the edge cover time $C_E$ (it also works for the vertex cover time). Additionally, we consider the critical exponent for exponential integrability of $C_E$. Moreover, we generalize these results to some more stopping times.

	In this paper, we introduce a lifted graph and use \emph{a general LDP for empirical measures of ORRWs} on this graph to give our proof. This general LDP considers a restricted Laplace functional in order to study the cover time and some other stopping times further. This method helps us to overcome the non-Markov property of ORRWs. A novel ingredient in our proofs is to provide an exponential decay (see Theorem \ref{m-exp decay}), which implies the critical exponent for exponential integrability of the cover time (see Theorem \ref{thm_prop_alpha_c}).

This paper is organized as follows. In Section \ref{sec 2}, we give core ideas and main results of the paper, introduce the weak convergence approach, and explain how it works in our study. In Section \ref{sec 4.1}, we prove the variational representation of critical exponent for for exponential integrability of the cover time. In Section \ref{sec 4.2}, we show the analytic property of this critical exponent. 
Finally, in Appendices, we give our proofs of some results such as propositions, lemmas and theorems appearing in the main text.

	\section{Main results and preliminaries}\label{sec 2}
\setcounter{equation}{0}
\noindent Our detailed main results are presented in Subsection \ref{sec 2.1}. In Subsection \ref{sec 2.15}, we lift the graph $G$ to a directed graph $S$, and state the related theorems on $S$ which are utilized to demonstrate the main results.

Let $X=(X_n)_{n\ge0}$ be the ORRW on $G$ with reinforcement factor $\delta$, and
$(\mathscr{F}_n)_{n\ge 0}$ be the natural filtration generated by the history of $X$.  We then give the definition of the ORRW $X$ by the following transition probability: for all $n\geq 0$, and $u\sim v$,

\[
\P\left(X_{n+1}=u\left|X_n=v,\mathscr{F}_n\right.\right)=\frac{w_n(uv)}{\mathop{\sum}_{u'\sim v}w_n(u'v)},
\]
where $w_n(e)$ is a random weight given by
\begin{equation}\label{weight}
	w_n(e)=1+(\delta-1)\cdot {\bm 1}_{\{N(e,n)>0 \}}=\left\{\begin{array}{ll}
		1 & {\rm if}\ N(e,n)=0,\\
		\delta & {\rm if}\ N(e,n)>0,
	\end{array}
	\right.
\end{equation}
where $\delta$ is a positive parameter called the reinforcement factor, $N(e,n):=|\{i<n:X_iX_{i+1}=e \}|$ is the number of times that $e$ has been traversed by the walk up to time $n$, and $|A|$ denotes the cardinality of any set $A$. If $\delta<1$, it is negatively reinforced; if $\delta>1$, it is positively reinforced. We write $\delta$-ORRW as the ORRW with parameter $\delta.$

\subsection{Main results}\label{sec 2.1}
\noindent Consider a $\delta$-ORRW $X=(X_n)_{n\geq 0}$ on a finite connected graph $G=(V,E)$ such that $X_0=x$ for some $x\in V$. Let $\b:=|E|\geq 2$ and $\mathbb{P}_{x_0}$ be the law of the $X=(X_n)_{n\geq 0}$ starting at $x_0$,  and $\mathbb{E}_{x_0}$ be the expectation under $\mathbb{P}_{x_0}$.

	\begin{definition}[Transition kernel on $G$]\label{Def for p_u on tree}
	\[
	\hat{p}_{E'}(x,y):=\frac{g(\delta,E',xy)}{\sum_{z\sim x}g(\delta,E',xz)},\ \text{for } y\sim x,
	\]
	where $E'\subseteq E$ and $g(\delta,E',xy):=\delta {\bf{1}}_{\{xy\in E' \}} + {\bf{1}}_{\{xy\notin E' \}}$. Specifically, $\hat{p}:=\hat{p}_E$ is the transition probability of the SRW on $G$.
\end{definition}

{

Let
\begin{eqnarray}\label{eq-edge-subset}
\mathscr{S}=\Bigg\{E':\ \exists\,\text{connected}\ G'=(V',E')\subseteq G=(V,E)\ \text{and}\ \exists\,v\in V\ \text{with}\
\{x,v\}\in E'\Bigg\}
\end{eqnarray}
be the set of edge subsets $E'$ of $E$ which induce a connected graph $G'=(V',E')$ with $V'$ being the set of all vertices of $E'$ and $x\in V'$ (noting $x$ is the starting point of the ORRW). Given any non-empty decreasing subset $\mathscr{S}_0$\label{mathscr{S}_0} of $\mathscr{S}$ with ``decreasing'' meaning that
$$\mbox{for any}\ E_1,E_2\in\mathscr{S}\ \mbox{with}\ E_1\subseteq E_2,\ \mbox{if}\ E_2\in\mathscr{S}_0,\ \mbox{then}\ E_1\in\mathscr{S}_0.$$
Consider a class of stopping times defined as
$$
\mathcal{T}_{\mathscr{S}_0}:=\inf\left\{n>0:\ \{e\in E: \exists m<n, X_mX_{m+1}=e \}\notin \mathscr{S}_0\right\}.
$$\label{mathcal{T}_{mathscr{S}_0}}
Notice edge cover time $C_E$ is a special case of the above stopping times:
$C_E=\mathcal{T}_{\mathscr{S}_1}\ \mbox{with}\ \mathscr{S}_1=\mathscr{S}\setminus\{E\}.$
Here,  $\mathscr{S}_1$ is also a decreasing subset of $\mathscr{S}$.

For any edge set $E'\subseteq E$, let $G'=(V',E')$ be the graph induced by $E'$ on all vertices of $E';$ and denote by $\mathscr{T}(E')$ the collection of the transition kernels $q$ from $V'$ to $V$ such that for any $x\in V'$ and $y\in V$, $q(x,y)>0$ only if $xy\in E'$. Particularly, $\mathscr{T}_G:=\mathscr{T}(E)$\label{mathscr{T}_G} is the set of the transition probabilities of Markov chains on the vertex set $V$ of the graph $G$, where the Markov chains move to their neighbours for each step.
For any Polish space $\mathscr{X}$, let $\mathscr{P}(\mathscr{X})$\label{mathscr{X}} be the set of all probability measures on $\mathscr{X}$ equipped with the weak convergence topology.

Now our main results on critical exponents for exponential integrability of stopping times $\mathcal{T}_{\mathscr{S}_0}$, particularly $C_E=\mathcal{T}_{\mathscr{S}_1}$, are given respectively by Theorem \ref{m-exp decay}, and Theorems \ref{Mthm C_E}-\ref{Mthm C_Edelta}.

\begin{theorem}[Critical exponent for exponential integrability of $\mathcal{T}_{\mathscr{S}_0}$]\label{m-exp decay}
Set
	\begin{equation}
	\alpha_c(\delta):=\inf_{\stackrel{\left(\nu,E_0,\hat{q}\right): E_0\in\mathscr{S}_0,\ \hat{q}\in\mathscr{T}(E_0)}{\nu\in\mathscr{P}(V),\ \nu \hat{q}=\nu,\ {\rm supp}(\nu)\subseteq V_0 }}\int_{V} R(\hat{q}\|\hat{p}_{E_0})\ {\rm d}\nu.\label{new exp. of alpha_c}
	\end{equation}	
\begin{itemize}
	\item[(a)]		
	$\mathbb{P}_{x_0}\left(\mathcal{T}_{\mathscr{S}_0}>n\right)$ decays exponentially with the rate $\alpha_c(\delta)\in (0,\infty)$, i.e.,
		\begin{equation}\label{exp-decay 1}
		\lim_{n\to\infty}\frac{1}{n}\log \mathbb{P}_{x_0}\left(\mathcal{T}_{\mathscr{S}_0}>n\right)=-\alpha_c(\delta).
		\end{equation}
	\item[(b)]	 More precisely,
	\begin{equation}\label{more alpha_c}
0<\liminf_{n\to\infty}e^{n\alpha_c(\delta)}\mathbb{P}_{x_0} (\mathcal{T}_{\mathscr{S}_0}>n)\le \limsup_{n\to\infty}e^{n\alpha_c(\delta)}\mathbb{P}_{x_0} (\mathcal{T}_{\mathscr{S}_0}>n)<\infty.
\end{equation}	
\item[(c)]  $\alpha_c(\delta)$ is the critical exponent for exponential integrability of $\mathcal{T}_{\mathscr{S}_0}$ in the sense that

\begin{equation*}\label{m-exp-crit}
	\mathbb{E}_{x_0}\left[ e^{\alpha \mathcal{T}_{\mathscr{S}_0}} \right]<\infty, \text{ if }  \alpha<\alpha_c(\delta), \text{ and }
	\mathbb{E}_{x_0}\left[ e^{\alpha \mathcal{T}_{\mathscr{S}_0}} \right]=\infty, \text{ if }  \alpha\ge\alpha_c(\delta).
\end{equation*}

\end{itemize}
		
	\end{theorem}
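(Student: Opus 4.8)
The plan is to derive Theorem \ref{m-exp decay} from the general large deviation principle for empirical measures of $\delta$-ORRWs established in \cite{HLX2022}, applied on the lifted directed graph $S$ that records the set of edges traversed so far together with the current vertex. The key observation is that the stopping time $\mathcal{T}_{\mathscr{S}_0}$ depends on the ORRW trajectory only through the sequence of (edge-set, vertex) pairs, and that the event $\{\mathcal{T}_{\mathscr{S}_0}>n\}$ is precisely the event that the lifted walk stays inside the "good" region of $S$ corresponding to edge sets in $\mathscr{S}_0$ for the first $n$ steps. Since $\mathscr{S}_0$ is decreasing, once the walk leaves this region it never returns, so $\{\mathcal{T}_{\mathscr{S}_0}>n\}$ is a monotone (decreasing in $n$) family of events whose probability we must estimate exponentially.

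For the lower bound in \eqref{exp-decay 1} I would proceed as follows. Restricting the ORRW to stay within the good region and never traverse a new edge that would push the traversed set out of $\mathscr{S}_0$ means the empirical measure of the lifted walk is supported on a fixed fiber: a connected subgraph $G_0=(V_0,E_0)$ with $E_0\in\mathscr{S}_0$, where the walk has already traversed exactly $E_0$. On such a fiber the ORRW moves as a genuine Markov chain with transition kernel $\hat{p}_{E_0}$ (all relevant edges are reinforced, carrying weight $\delta$, and the "forbidden" edges carry weight $1$ but are never used on this event by assumption — more carefully, one wants the walk to stay on $E_0$ and one pays the cost of avoiding the edges in $\partial E_0 \setminus(\text{stuff})$). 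By the LDP lower bound (Varadhan-type / Laplace principle), for any stationary measure $\nu$ on $V_0$ for a transition kernel $\hat q\in\mathscr{T}(E_0)$, a typical trajectory of length $n$ whose empirical measure is close to $\nu\otimes\hat q$ has probability at least $\exp(-n(\int R(\hat q\|\hat p_{E_0})\,d\nu + o(1)))$, and such a trajectory, with high probability, never leaves the good region (since $\nu$ is supported on $V_0$ and $\hat q$ only uses edges of $E_0$, with a small correction needed to keep the finitely many "new-edge" excursions from happening — handled by a standard local modification of the path at a cost that is subexponential). Optimizing over all admissible triples $(\nu,E_0,\hat q)$ yields $\liminf_n \frac1n\log\mathbb{P}_{x_0}(\mathcal{T}_{\mathscr{S}_0}>n)\ge -\alpha_c(\delta)$.

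For the upper bound, I would use the LDP upper bound for the empirical measure of the lifted walk together with the contraction/identification of the rate function. On the event $\{\mathcal{T}_{\mathscr{S}_0}>n\}$ the lifted empirical measure is supported on the good region; since the lifted state space is finite (as $G$ is finite, there are finitely many edge subsets), by compactness and the LDP upper bound, $\limsup_n\frac1n\log\mathbb{P}_{x_0}(\mathcal{T}_{\mathscr{S}_0}>n)\le -\inf\{I(\mu): \mathrm{supp}(\mu)\subseteq \text{good region}\}$, where $I$ is the (Donsker–Varadhan-type) rate function on the lifted space from \cite{HLX2022}. The final step is to show this infimum equals $\alpha_c(\delta)$: any measure $\mu$ on the lifted space with finite rate and support in the good region must in fact be supported on a single fiber (because the lifted dynamics only moves between fibers by adding edges, which is a one-way transition with no return, so a measure invariant under the "flow" associated to finite rate cannot charge two distinct fibers — this uses that $I(\mu)<\infty$ forces a flow-balance condition), reducing $I(\mu)$ to $\int R(\hat q\|\hat p_{E_0})\,d\nu$ for the corresponding $(\nu,E_0,\hat q)$. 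Parts (b) and (c) then follow: (b) by a more refined renewal/subadditivity argument showing the $o(n)$ corrections are actually $O(1)$ — since the good region is finite, standard Perron–Frobenius theory for the sub-stochastic restriction gives $\mathbb{P}_{x_0}(\mathcal{T}_{\mathscr{S}_0}>n)\asymp e^{-n\alpha_c(\delta)}$ with matching constants — and (c) is immediate from (a) and (b) by summing the geometric-type series $\sum_n e^{\alpha n}\mathbb{P}_{x_0}(\mathcal{T}_{\mathscr{S}_0}>n)$.

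The main obstacle I anticipate is the identification step in the upper bound: showing that a finite-rate invariant-type measure on the lifted graph supported in the good region must concentrate on a single fiber, and that its restriction there is exactly a stationary pair $(\nu,\hat q)$ with the claimed relative-entropy cost. This requires carefully unpacking the rate function from \cite{HLX2022} on the lifted graph and exploiting the acyclic (monotone) structure of inter-fiber transitions; a subtlety is that the ORRW's transition kernel on a partially-explored fiber is not $\hat p_{E_0}$ but a mixture reflecting which edges are already traversed, so one must check the lifted rate function's "comparison kernel" on the fiber of edge-set $E_0$ genuinely is $\hat p_{E_0}$ as in Definition \ref{Def for p_u on tree}. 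Establishing that $\alpha_c(\delta)\in(0,\infty)$ — strict positivity — also needs an argument: it follows because $\hat p_{E_0}$ assigns positive probability to leaving $V_0$ along some edge in $\partial E_0$ unless $E_0=E$, so $R(\hat q\|\hat p_{E_0})$ is bounded below by a positive constant uniformly over the (finitely many) fibers, while finiteness is clear since any fixed stationary pair gives a finite value.
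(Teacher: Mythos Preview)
Your overall strategy matches the paper's: apply the restricted Laplace principle from \cite{HLX2022} (Theorem \ref{estimate for rate} here) on the lifted graph to obtain the exponential decay rate of $\mathbb{P}_{x_0}(\mathcal{T}_{\mathscr{S}_0}>n)$, identify that rate with the variational formula \eqref{new exp. of alpha_c}, and read off (c) from (a)--(b) by summing a geometric series.

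However, your identification step rests on an incorrect claim. You assert that any $\mu$ with finite rate and support in the good region ``must in fact be supported on a single fiber.'' This is false: the rate function $\Lambda_{\delta,\mathscr{S}_0}$ in \eqref{rate function} is \emph{defined} as an infimum over decompositions $\sum_k r_k\mu_k=\mu$ with each $\mu_k$ living on its own fiber $E_k$, so convex combinations across fibers have finite rate (and your ``flow-balance'' argument cannot rule them out, since any $\qq$ with $\mu\qq=\mu$ simply fixes each fiber separately). What is true---and what the paper proves as Proposition \ref{another expression of alpha_c}---is that the \emph{infimum over all $\mu$} of $\Lambda_{\delta,\mathscr{S}_0}(\mu)$ equals the single-fiber infimum \eqref{exprs1 alpha_c}. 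The argument is a direct variational comparison rather than a dynamical obstruction: any triple $(\mu,E_0,\qq)$ embeds as a decomposition with one $r_k=1$, giving ``$\le$''; and any decomposition has cost at least $\min_k \int R(\qq_k\|\pp_{E_k})\,{\rm d}\mu_k$, giving ``$\ge$''. The passage from \eqref{exprs1 alpha_c} to the vertex-level formula \eqref{new exp. of alpha_c} is then Lemma \ref{inf attainment} plus a routine unwinding of the map $T$.

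For part (b), ``standard Perron--Frobenius theory for the sub-stochastic restriction'' is too quick. On the lifted space that also carries the traversed-edge-set coordinate the process is Markov, but the restriction to the good region is highly reducible (the edge-set only grows), so the transition matrix is block upper-triangular by fiber; reducible nonnegative matrices can produce $n^{k}\rho^{n}$ asymptotics when the top eigenvalue is shared across blocks, and plain Perron--Frobenius does not deliver $\Theta(\rho^n)$. The paper handles this by a renewal decomposition at the new-edge times $\tau_i$ of \eqref{def of renewal time}: between $\tau_{i-1}$ and $\tau_i$ the walk is a genuine irreducible Markov chain on the fixed fiber $E_{i-1}$, to which Lemma \ref{order of Markov process} (Perron--Frobenius) applies, and the pieces are reassembled via a convolution estimate over $n_1+\dots+n_{l-1}=n$ to obtain $\mathbb{P}_{x_0}(\mathcal{T}_{\mathscr{S}_0}>n)=\Theta(p^n)$ with $p=\max_{i,\{E_j\}} p_{i,\{E_j\}}$.
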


The proof of Theorem \ref{m-exp decay} is stated in Section \ref{sec 4.1}. Letting
\begin{equation}
	\alpha_c^1(\delta):=\inf_{\stackrel{\left(\nu,E_0,\hat{q}\right): E_0\in\mathscr{S}_1}{\nu \hat{q}=\nu,\, {\rm supp}(\nu)\subseteq E_0 }}\int_{V} R(\hat{q}\|\hat{p}_{E_0})\ {\rm d}\nu, \label{eq-alpha-1}
\end{equation}
and applying Theorem \ref{m-exp decay} to $C_E=\mathcal{T}_{\mathscr{S}_1}$ ($\mathscr{S}_1=\mathscr{S}\setminus \{E\}$), we have Theorem \ref{Mthm C_E}.  We still cannot provide some explicit expressions of the critical exponent $\alpha_c(\delta)$ and $\alpha^1_c(\delta)$. However,  through detailed analysis, we obtain some analytic properties and asymptotic behavior of  $\alpha^1_c(\delta)$ as $\delta\to 0$ and $\delta\to\infty$ in the following Theorem \ref{thm_prop_alpha_c}.

\begin{theorem}[Analytic property and asymptotic behaviour of $\alpha_c(\delta)$]\label{thm_prop_alpha_c}
	
	\begin{itemize}
		\item [(a)] $\alpha_c(\delta)$ is continuous and strictly decreasing in $\delta>0$ and uniformly continuous in $\delta\ge\delta_0$ for any $\delta_0>0$.
		
		\item[(b)]$\lim\limits_{\delta\to \infty}\alpha_c(\delta)=0$.
		\item [(c)] If for some edge $e\in E$, the collection of all edges adjacent to $e$, denoted by
		
		\noindent $E_e:=\{e'\in E:\ e'\cap e\neq\emptyset\},$
		is a subset of some $E_0\in\mathscr{S}_0$, then
		\[
		\lim_{\delta\to0}\alpha_c(\delta)=\inf_{\stackrel{(\nu,\E_0\in\mathscr{S}_0,\hat{q}):\ \nu\in\mathscr{P}(V), \nu\hat{q}=\nu}{ E({\rm supp}(\nu))\subseteq E_0\setminus \partial E_0 }}\int_V R(\hat{q}\|\hat{p})\ {\rm d}\nu<\infty,
		\]
		where $E(V')$ is the edge set of subgraph induced by the vertex set $V'$. Otherwise,		
		\[\lim_{\delta\to 0}\alpha_c(\delta)=\infty.\]
	\end{itemize}
\end{theorem}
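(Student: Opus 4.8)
The plan is to analyze the variational formula \eqref{new exp. of alpha_c} directly, treating $\alpha_c(\delta)$ as an infimum over the compact-ish set of triples $(\nu,E_0,\hat q)$ with $E_0$ ranging over the \emph{finite} set $\mathscr{S}_0$ and, for each fixed $E_0$, the pair $(\nu,\hat q)$ ranging over stationary distributions supported in $V_0$ together with kernels $\hat q\in\mathscr{T}(E_0)$. For part (a), the main point is that $\delta\mapsto\hat p_{E_0}(x,y)$ is a smooth (rational, strictly positive) function of $\delta$ on $(0,\infty)$ for each fixed $E_0$ and each edge $xy$, so $\delta\mapsto R(\hat q\,\|\,\hat p_{E_0})=\sum_{y}\hat q(x,y)\log\frac{\hat q(x,y)}{\hat p_{E_0}(x,y)}$ is continuous (indeed locally Lipschitz, uniformly over the compact set of $(\nu,\hat q)$) in $\delta$, uniformly in the minimizing variables once $\delta$ is bounded away from $0$. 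Taking the infimum over the finitely many $E_0$ and the compact set of $(\nu,\hat q)$ preserves continuity and uniform continuity on $[\delta_0,\infty)$. For strict monotonicity I would show that for each $y\sim x$, increasing $\delta$ makes $\hat p_{E_0}(x,y)$ move monotonically in a way that, combined with the fact that the relevant stationary kernels $\hat q$ must put some mass on untraversed edges (because $E_0\neq E$ forces $V_0$ to have a boundary), strictly increases the corresponding $\log\frac1{\hat p_{E_0}}$ term; a short argument using that $\hat p_{E_0}(x,\cdot)$ is the SRW kernel reweighted by $\delta$ on $E_0$-edges shows the integrand is strictly decreasing in $\delta$ at any near-optimal triple, hence so is the infimum.

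For part (b), I would exhibit an explicit near-stationary choice: pick $E_0\in\mathscr{S}_0$ and a vertex-subset $V_0$ with a spanning structure in which the SRW on the induced subgraph has stationary measure $\pi_{E_0}$; then as $\delta\to\infty$ the kernel $\hat p_{E_0}(x,\cdot)$ converges to the transition kernel that, from any vertex incident to an $E_0$-edge, stays inside $E_0$ (the weight $\delta$ on traversed edges dominates the weight $1$ on the boundary edges leaving $E_0$). Choosing $\hat q=\hat p_{E_0}$ restricted to this recurrent class gives $R(\hat q\,\|\,\hat p_{E_0})\to 0$ pointwise, and one checks the stationary measure can be taken supported in $V_0$ in the limit; hence $0\le\alpha_c(\delta)\le R(\hat q\,\|\,\hat p_{E_0})\to 0$.

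Part (c) is the heart of the matter and the step I expect to be the main obstacle. The dichotomy reflects whether one can ``hide'' inside $E_0$ using only \emph{already-traversed} edges: if some edge $e$ has its full neighborhood $E_e$ contained in an $E_0\in\mathscr{S}_0$, then as $\delta\to 0$ the walk, having traversed $E_0$, can be trapped near $e$ using only interior edges of $E_0$ — formally, one restricts $\hat q$ to $E_0\setminus\partial E_0$, and since on those edges $\hat p_{E_0}\to\hat p$ (the SRW kernel) as $\delta\to0$ fails in general but the \emph{relative} weights among $E_0$-edges at an interior vertex are all $\delta$ and cancel, giving convergence of $\hat p_{E_0}(x,\cdot)$ restricted to interior vertices to a genuine stochastic kernel; the infimum of $\int R(\hat q\,\|\,\hat p)\,d\nu$ over stationary $\nu$ supported on $E_0\setminus\partial E_0$ is then finite and is the claimed limit, and a lower-semicontinuity/$\Gamma$-convergence argument as $\delta\to 0$ identifies it exactly. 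Conversely, if no such $e$ exists, then every stationary $\hat q\in\mathscr{T}(E_0)$ for every $E_0\in\mathscr{S}_0$ must route positive mass across some boundary edge of $E_0$, i.e. from a traversed edge (weight $\delta$) to an untraversed one (weight $1$); at such a step $\hat p_{E_0}(x,y)\asymp \delta\to0$ for the traversed direction while $\hat q$ keeps order-one mass there, forcing $R(\hat q\,\|\,\hat p_{E_0})\gtrsim \log\frac1\delta\to\infty$, uniformly over admissible triples, whence $\alpha_c(\delta)\to\infty$. Making ``must route mass across the boundary'' precise — this is where the combinatorial condition on $E_e$ enters — and getting the lower bound uniform over the finitely many $E_0$ and over all stationary $(\nu,\hat q)$ is the delicate part; I would argue by contradiction, extracting from a hypothetical bounded sequence of minimizers a limiting stationary kernel supported on interior edges, which would produce an edge with neighborhood inside some $E_0$.
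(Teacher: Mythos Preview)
Your outline for parts (b) and (c) is broadly aligned with the paper's approach---in particular, the compactness/contradiction argument you sketch for the divergent case of (c) is exactly what the paper does (extracting a limit of minimizers and showing the limit must be supported on interior edges). For (b) the paper actually takes a shortcut via a direct probabilistic estimate ($\mathbb{P}_{x_0}(\tau_2>n)\ge c(\delta/(\b-1+\delta))^{n-1}$) rather than the variational formula, but your route would also work.

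The genuine gap is in part (a), specifically strict monotonicity. You assert that ``the relevant stationary kernels $\hat q$ must put some mass on untraversed edges (because $E_0\neq E$ forces $V_0$ to have a boundary)''. First, $\hat q\in\mathscr{T}(E_0)$ is supported on $E_0$-edges, so the statement as written is confused; presumably you mean the minimizing stationary $\nu$ must put mass on boundary vertices of $V_0$. But this does \emph{not} follow from $V_0$ having a boundary. At an interior vertex $x$ (all incident edges in $E_0$) one has $\hat p_{E_0}(x,\cdot)=\hat p(x,\cdot)$ independently of $\delta$, so if the minimizer $(\nu,E_0,\hat q)$ were supported on interior vertices---which is a priori possible precisely in the ``finite-limit'' regime of part (c)---the cost would be $\delta$-independent and strict monotonicity would fail. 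The paper closes this gap via a separate proposition (their Proposition~\ref{another expression of alpha_c 1}(b)) showing the infimum in the alternative representation \eqref{exprs2 alpha_c} is always attained at a $\mu$ with full support ${\rm supp}(\mu|_E)=E_0$, hence touching $\partial E_0$. The proof is a non-obvious perturbation argument: starting from a minimizer $\mu'$ with ${\rm supp}(\mu'|_E)\subsetneq E_0$, one adds mass $\varepsilon$ on a new edge and shows the derivative of the cost at $\varepsilon=0$ is $-\infty$ (the relative-entropy part contributes $\sim\varepsilon\log\varepsilon$, which beats the $O(\varepsilon)$ change in the boundary-penalty term). You should either supply this argument or cite it; without it, strict monotonicity is unproven and, by the same token, the strict inequality needed to drive the divergence in the ``otherwise'' case of (c) is also unjustified at the level of your sketch.
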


 The proof of Theorem \ref{thm_prop_alpha_c} is stated in Section \ref{sec 4.2}. 
 Theorem \ref{Mthm C_Edelta} is derived from Theorem \ref{thm_prop_alpha_c}, see in Section \ref{sec 4.2}.
Moreover, due to Theorems \ref{m-exp decay} and \ref{thm_prop_alpha_c}, we can prove  a comparison theorem of tail probability of $\mathcal{T}_{\mathscr{S}_0}(\delta)$ with respect to different $\delta$.

 \begin{corollary}\label{stochastic inequality}
 	For any $0<\delta_1<\delta_2$, 
 	there exists some $N=N(\delta_1,\delta_2)$ such that
 	$$\mathbb{P}_{x_0}\left(\mathcal{T}_{\mathscr{S}_0}(\delta_1)>n\right)<\mathbb{P}_{x_0}\left(\mathcal{T}_{\mathscr{S}_0}(\delta_2)>n\right),\ n>N.$$
 \end{corollary}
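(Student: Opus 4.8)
The plan is to read off the corollary directly from the two preceding theorems: the exact exponential decay rate in Theorem~\ref{m-exp decay} and the \emph{strict} monotonicity of $\delta\mapsto\alpha_c(\delta)$ in Theorem~\ref{thm_prop_alpha_c}(a). Note first that, for a fixed non-empty decreasing family $\mathscr{S}_0\subseteq\mathscr{S}$, the stopping time $\mathcal{T}_{\mathscr{S}_0}(\delta)$ is defined by the \emph{same} combinatorial rule for every $\delta$; only the law $\mathbb{P}_{x_0}$ of the driving walk $X$ depends on $\delta$. So all quantities below refer to the same graph $G$, the same starting point $x_0$, and the same $\mathscr{S}_0$, and the only thing that changes between $\delta_1$ and $\delta_2$ is the underlying measure.

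Then I would proceed as follows. By Theorem~\ref{m-exp decay}(a), for $i=1,2$,
\[
\lim_{n\to\infty}\frac1n\log\mathbb{P}_{x_0}\!\left(\mathcal{T}_{\mathscr{S}_0}(\delta_i)>n\right)=-\alpha_c(\delta_i),
\]
and in particular these tail probabilities are positive for all large $n$ (using also the $\liminf$ bound in Theorem~\ref{m-exp decay}(b)). Subtracting the two relations gives
\[
\lim_{n\to\infty}\frac1n\log\frac{\mathbb{P}_{x_0}\!\left(\mathcal{T}_{\mathscr{S}_0}(\delta_1)>n\right)}{\mathbb{P}_{x_0}\!\left(\mathcal{T}_{\mathscr{S}_0}(\delta_2)>n\right)}=-\bigl(\alpha_c(\delta_1)-\alpha_c(\delta_2)\bigr).
\]
Since $\alpha_c$ is strictly decreasing on $(0,\infty)$ by Theorem~\ref{thm_prop_alpha_c}(a) and $0<\delta_1<\delta_2$, the number $\Delta:=\alpha_c(\delta_1)-\alpha_c(\delta_2)$ is strictly positive, so the displayed limit equals $-\Delta<0$. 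Hence there is $N=N(\delta_1,\delta_2)$ such that for all $n>N$,
\[
\frac1n\log\frac{\mathbb{P}_{x_0}\!\left(\mathcal{T}_{\mathscr{S}_0}(\delta_1)>n\right)}{\mathbb{P}_{x_0}\!\left(\mathcal{T}_{\mathscr{S}_0}(\delta_2)>n\right)}<-\frac{\Delta}{2}<0,
\]
which rearranges to $\mathbb{P}_{x_0}(\mathcal{T}_{\mathscr{S}_0}(\delta_1)>n)<\mathbb{P}_{x_0}(\mathcal{T}_{\mathscr{S}_0}(\delta_2)>n)$, as claimed. (Alternatively, one can bound the numerator above by $C_1e^{-n\alpha_c(\delta_1)}$ and the denominator below by $c_2e^{-n\alpha_c(\delta_2)}$ for large $n$, using the two-sided estimates of Theorem~\ref{m-exp decay}(b), and conclude in the same way.)

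There is no substantial obstacle: the analytic content is entirely carried by Theorems~\ref{m-exp decay} and~\ref{thm_prop_alpha_c}, and the proof is just a logarithmic comparison of tails. The two points worth stressing are, first, why the statement must be merely \emph{eventual} — the argument only shows that the ratio of tail probabilities tends to $0$, not that $\mathcal{T}_{\mathscr{S}_0}(\delta_1)$ is stochastically dominated by $\mathcal{T}_{\mathscr{S}_0}(\delta_2)$ for every $n$; since reinforcement alters the short-time behaviour of the walk in a way that need not be monotone, the threshold $N$ cannot in general be removed. Second, it is the \emph{strictness} of the monotonicity of $\alpha_c$ that is doing the work: if one only knew $\alpha_c(\delta_1)\ge\alpha_c(\delta_2)$ with possible equality, the sub-exponential corrections could tip the comparison either way, so invoking Theorem~\ref{thm_prop_alpha_c}(a) (rather than mere continuity of $\alpha_c$) is the key input.
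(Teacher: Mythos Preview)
Your proof is correct and follows essentially the same approach as the paper: both combine the exponential decay rate from Theorem~\ref{m-exp decay} with the strict monotonicity of $\alpha_c$ from Theorem~\ref{thm_prop_alpha_c}(a), the only cosmetic difference being that you work directly with the log-ratio of the two tail probabilities while the paper sandwiches each tail separately between $e^{-n(\alpha_c(\delta_i)\pm\varepsilon)}$ and then takes $\varepsilon$ small enough.
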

 
 \begin{proof}[Proof of Corollary \ref{stochastic inequality}]
 	By Theorem \ref{m-exp decay}, for $i\in\{1,2\}$, we can verify that \[\lim_{n\to\infty}\frac{1}{n}\log\mathbb{P}_{x_0}\left(\mathcal{T}_{\mathscr{S}_0}(\delta_i)>n\right)=-\alpha_c(\delta_i).\]
 	By Theorem \ref{thm_prop_alpha_c},
 	$\alpha_c(\delta_1)>\alpha_c(\delta_2)>0$. Thus for any $\varepsilon\in\left(0,\alpha_c(\delta_2)\right)$, there is some $N=N(\varepsilon,\delta_1,\delta_2)$ such that for all $n>N$,
 	$
 	\exp\left(-n(\alpha_c(\delta_i)+\varepsilon)\right) <\mathbb{P}_{x_0}\left(\mathcal{T}_{\mathscr{S}_0}(\delta_i)>n\right)< \exp\left(-n(\alpha_c(\delta_i)-\varepsilon)\right),
 	$
 	$i=1,2.$
 	Then for small enough $\varepsilon$, $\mathbb{P}_{x_0}\left(\mathcal{T}_{\mathscr{S}_0}(\delta_1)>n\right)<\mathbb{P}_{x_0}\left(\mathcal{T}_{\mathscr{S}_0}(\delta_2)>n\right)$ for any $n>N$.\end{proof}

 We give Corollary \ref{stochastic inequality}, a comparison theorem of stopping times for different $\delta$, by some delicate exponential estimates in Theorem \ref{m-exp decay}. We are interested in some stronger comparison theorems to answer some questions on long time behaviors of the ORRWs. For instance, on infinite connected graph $G$, if $\delta_1<\delta_2$ and the $\delta_1$-ORRW is recurrent, is the $\delta_2$-ORRW recurrent?

 \begin{remark}
 	\rm	
 	The stopping times $\mathcal{T}_{\mathscr{S}_0}$ include not only $C_E$, but also some interesting stopping times such as hitting times and cover times of subgraphs.
 	
 	For any subset $E'\subset E$, set
 	\[
 	\mathscr{S}_{E'}^{\rm hitting}=\left\{E''\in \mathscr{S}:\ E''\subseteq E' \right\},\ \text{and}\ 
 	\mathscr{S}_{E'}^{\rm cover}=\left\{E''\in\mathscr{S}:\ E'\setminus E''\neq \emptyset\right\}.
 	\]
 	For the $\delta$-ORRW $(X_n)_{n\geq 0}$, $\mathcal{T}_{\mathscr{S}_{E'}^{\rm hitting}}=\inf\left\{n\geq 1:\ X_{n-1}X_n\notin E'\right\}$
 	is the hitting time of $E\setminus E'$, and $\mathcal{T}_{\mathscr{S}_{E'}^{\rm cover}}=\inf\left\{n\geq 1:\ E'\subseteq\{X_{m-1}X_m:\ m\le n\} \right\}$ is the edge cover time of subgraph induced by $E'$.
 	Thus we may obtain the similar properties to those in Theorem \ref{m-exp decay} and Corollary \ref{stochastic inequality} for both the hitting time and the edge cover time of subgraphs, where the discussion of edge cover times of subgraphs may be helpful to consider the recurrence. More precisely, for fixed $m$, if we can obtain a uniformly exponential integrability of cover times $C_{m,n}:=\inf\{ t: X_t\in [-n,n]^2 \text{ traverse all edges in } [-m,m]^2  \}$, which also implies its tightness, then we may deduce that the ORRW in $\mathbb{Z}^2$ traverses all edges in $[-m,m]^2$ in finite time $C_m$. This may yield a proof of the recurrence on $\mathbb{Z}^2$ by using Borel-Cantelli's lemma to $\mathbb{P}\big(X_{j+C_m} \text{ returns to }(0,0)\big)$. We will follow this idea to consider the recurrence of ORRWs on $\mathbb{Z}^2$.
 \end{remark}

\subsection{Lifted directed graph and generalized Laplace principle}\label{sec 2.15}

\noindent  In this subsection, we introduce the lifted directed graph. This graph was firstly introduced in \cite{HLX2022}, where the vertex set and edge set consist of all oriented edges of $G=(V,E)$ and all $2$-step oriented paths of $G$, respectively. We implement a stochastic process  $(\mathcal{Z}_n)_{n\ge 0}$ (see \eqref{Z-n}) on the lifted directed graph, the projection of which is the ORRW $(X_n)_{n\ge0}$ on the graph $G$. We consider its empirical process $(\mathcal{L}_n)_{n\ge 0}$.
This method helps us to turn the ORRW into some vertex-reinforced random walk on the lifted directed graph. Its transition probability can be determined if the current location of the random walk and the current empirical measure are known. Precisely, $(\mathcal{Z}_n, \mathcal{L}_n)_{n\ge 0}$ is a non-homogeneous Markov process.

Define a lifted directed graph $S$\label{S} associated with $G$ as follows:
\begin{itemize}
  \item Set the vertex set $V_S$ of $S$ to be $\{ \overrightarrow{uv}: u,v\in V, u\sim v \}$.
  \item  To character the directed edge set of $S$,  for  $\overrightarrow{uv}\in V_S$, set $z^+$ (resp. $z^-$), the head (resp. tail) of $z$, by that
	\begin{equation}
	z^+=v,\ z^-=u.\label{head_tail}
	\end{equation}

For every two vertices $z_1,z_2\in S$, denoted by $z_1\to z_2$, if $z_1^+=z_2^-$\label{z_1 to z_2}, and then the  directed edge  from $z_1$ to $z_2$ is defined as $\overrightarrow{z_1z_2}$ if $z_1\to z_2$.
\item For vertex set $V'\subseteq V_S$, let $\partial V'$ be all vertices $z'\in V'$ such that there is some $z\in V_S\setminus V'$ with $z'\to z$.  $\partial V'$ is the boundary vertex set of $V'$.
\end{itemize}

Set
\begin{equation}\label{Z-n}
 \mathcal{Z}_n:=\overrightarrow{X_nX_{n+1}},  \ n\ge 0.
 \end{equation}
Here, for any $n\geq 0$, $\mathcal{Z}_n^-=X_{n}$ and $\mathcal{Z}_n^+=X_{n+1}$. 

\begin{definition}[Transition kernel on $S$]\label{Def for p_u}  For any measure $\mu$ on $V_S$, define
	\begin{align*}
		\pp_{\mu}(z_1,z_2;\delta):=
			\left\{
			\begin{aligned}
			&\frac{{\bm 1}_{\{\mu|_E(z_2|_E)=0 \}}+\delta {\bm 1}_{\{\mu|_E(z_2|_E)>0 \}}}{\sum_{z\leftarrow z_1}{\bm 1}_{\{\mu|_E(z|_E)=0 \}}+\delta {\bm 1}_{\{\mu|_E(z|_E)>0 \}}}, \ &z_1\to z_2,\\
			&0, \ &{\rm otherwise,}
			\end{aligned}\right.
			\end{align*}
where $\mu|_E$ is the projective measure of $\mu$ to edges in $E$, and $z|_E$ stands for the edge given by deleting the direction of $z$.
\end{definition}
	
We emphasize that for fixed $\delta$, $\pp_\mu$ only depends the support of $\mu$, i.e., $\pp_\mu=\pp_\nu$ if $\text{supp}(\mu)=\text{supp}(\nu)$. Sometimes, we write $\pp_\mu(z_1,z_2;\delta)$ as $\pp_{\mu}(z_1,z_2)$ for fixed $\delta$.

	For every $E'\subseteq E$, we denote the indicator by $\rho_{E'}$ on $V_S$, where
	\[
	\rho_{E'}(\{z\})=
	1 \text{ if }z|_E\in E', \text{ and }
	0 \text{ if }z|_E\notin E'.
	\]
For convenience, we write
\begin{equation}\label{p_E}
\pp_{E'}(z_1,z_2;\delta):=\pp_{\rho_{E'}}(z_1,z_2;\delta).
\end{equation}

For every fixed path $\omega=(X_n)_{n\geq 0}$, let	
$
 {E}_n^\omega=\left\{e\in E:\ \exists m\le n,\ \mathcal{Z}_m|_E(\omega)=e\right\}
$
be the collection of edges traversed by $X$ up to time $n\geq 1$. We may observe that for any $n\geq 1,$
$\pp_{{E}_n^\omega}$ is the transition probability from $\mathcal{Z}_n$ to $\mathcal{Z}_{n+1}$. Precisely,
	\begin{equation}
	\P(\mathcal{Z}_{n+1}=z_2\,|\,\mathcal{Z}_n=z_1,\mathscr{F}_n)=\pp_{{E}_n^\omega}(z_1,z_2), \text{ for }z_1\to z_2, \label{transition probability}
	\end{equation}
where $\mathscr{F}_n$ is the natural filtration generated by the history of $X$ up to time $n$, i.e., generated by the history of $\mathcal{Z}$ up to time $n-1$.

Set
\begin{eqnarray}\label{eq-empirical-lifted}	
\mathcal{L}^n(A)=\frac{1}{n}\sum_{i=0}^{n-1}\dd_{\mathcal{Z}_i}(A),\ A\subseteq S,\ n\geq 1
\end{eqnarray}
to be the empirical measures of $(\mathcal{Z}_n)_{n\geq 0}$, where $\dd_z$ is the Dirac measure on $V_S$ at $z$. Specifically, $\mathcal{L}^0\equiv0$.  Since  $\pp_{\mathcal{L}^{n}}=\pp_{E_n^\omega}$, the process $(\mathcal{Z}_n,\mathcal{L}^n)_{n\geq 0}$ is a non-homogeneous Markov process with the transition probability
\begin{equation}\label{Z_n,L_n}
  \P\left(\left. \mathcal{Z}_{n+1}=z',\mathcal{L}^{n+1}=\mu'\,\right|\,\mathcal{Z}_n=z,\mathcal{L}^n=\mu\right)=p_{\mu+\dd_z}\left(z,z'\right)\cdot \dd_{\frac{n\mu+\dd_z}{n+1}}\left(\mu'\right),\ n\geq 1,
\end{equation}
where $\dd_{\nu}$ is the Dirac measure on $\mathscr{P}(V_S)$\label{mathscr{P}(T)} at $\nu$.

Denote by $\mathscr{E}$\label{mathscr{E}} the collection of all sequences of subsets $\{E_k\}_{1\le k\le \b}$ satisfying the following conditions: 
\begin{itemize}
	\item[\bf{A1}]$E_k\subset E_{k+1}\subseteq E,\ 1\leq k<\b$.
	\item[\bf{A2}]$|E_k|=k,\ 1\leq k\leq \b$.
	\item[\bf{A3}]There exists $v\in V$ such that $\{x,v\}\in E_1$.
	\item[\bf{A4}]The unique edge in $E_{k+1} \setminus E_k$ is adjacent to some edge in $E_k$.
\end{itemize}
Each $E_k$ induces a connected graph on all its vertices. Intuitively, $\mathscr{E}$ collects all possible sequences of edge sets generated by $(X_nX_{n+1})_{n\geq 0}$. Mathematically speaking, for all fixed paths $\omega=(X_n)_{n\geq 0}$, we have 
$E_k(\omega)=\{X_{n-1}X_{n}:\ n\le{\tau}_{k}\}$,
where
\begin{equation}
	{\tau}_k=\inf\left\{j>{\tau}_{k-1}:\ X_{j-1}X_j\notin \{X_{i-1}X_i:i<j \} \right\}\ \mbox{and}\ {\tau}_1:=0.\label{def of renewal time}
\end{equation}

For all non-empty decreasing subsets $\mathscr{S}_0$\label{mathscr{S}_0} of $\mathscr{S}$, and all $\mu\in\mathscr{P}(V_S),$ set
	\begin{align}
		\mathscr{A}(\mu,\mathscr{S}_0)=\Bigg\{& (\mu_k,r_k,E_k)_{1\leq k\leq \b}:\ \{E_j\}_{1\leq j\leq \b}\in\mathscr{E};
		r_k\ge 0,\ r_l=0\ \text{for}\ E_l\notin\mathscr{S}_0,\ \sum_{k=1}^{\b}r_k=1;\nonumber\\
	&\hskip 5mm \mu_k\in\mathscr{P}(V_S),\ {\rm{supp}}(\mu_k|_E)\subseteq E_k,\ \sum_{k=1}^{\b} r_k \mu_k=\mu\Bigg\}. \label{A}
	\end{align}
 Let $\mathscr{T}_S$\label{mathscr{T}_S} be the collection of all transition probabilities of nearest-neighbour Markov chains on $V_S$.

\begin{definition}\label{Def Lambda} For any $\mu\in\mathscr{P}(V_S)$ and non-empty decreasing subset $\mathscr{S}_0$ of $\mathscr{S},$ let
		\begin{align}
		{\Lambda}_{\delta,\mathscr{S}_0}(\mu)=\inf_{\stackrel{(\mu_k,r_k,E_k)_{k}\in\mathscr{A}(\mu,\mathscr{S}_0)}
        {\qq_{k}\in\mathscr{T}_S:\,\mu_{k}\qq_{k}=\mu_{k}}}
		\sum_{k=1}^{\b}r_k\int_{V_S}R(\qq_{k}\|\pp_{E_k})\ {\rm d}\mu_{k}.\label{rate function}
		\end{align}
Specifically, write $\Lambda_{\delta,\mathscr{S}}(\cdot)$ as $\Lambda_\delta(\cdot)$.
\end{definition}
	
Denote by
\begin{equation}\label{eq-renew-subset}
\mathscr{E}_z:=\{\{E_k\}_{1\le k\le \b}\in\mathscr{E}:\ E_1=\{z|_E\} \},
\end{equation}
the renewal subsets of $E$ starting from edge set $\{z|_E\}$ for any $z\in V_S.$ Set
\begin{equation}
 \mathscr{A}_z(\mu,\mathscr{S}_0)=\left\{(\mu_{k},r_k,E_k)_{1\leq k\leq \b}\in\mathscr{A}(\mu,\mathscr{S}_0):\ \{E_k\}_{1\le k\le \b}\in \mathscr{E}_z \right\}.
 \label{A_z}
\end{equation}
Define
\begin{eqnarray}\label{eq-closed-subset}
\mathcal{C}(\mathscr{S}_0):=\left\{\mu\in\mathscr{P}(V_S):\ \text{supp}(\mu|_E)\subseteq E'\ \text{for some}\ E'\in \mathscr{S}_0 \right\}.
\end{eqnarray}
Then $\mathcal{C}(\mathscr{S}_0)$ is a closed subset of $\mathscr{P}(V_S)$ under the weak convergence topology. 

For any bounded and continuous function $h$, set
\begin{equation}
W^n_{\mathscr{S}_0}(z):=\left\{\begin{array}{ll}
-\frac{1}{n}\log{{\E}_z\left\{\exp\left[-nh(\mathcal{L}^n)\right]\mathbf{1}_{\{\mathcal{L}^n\in\mathcal{C}(\mathscr{S}_0)\}}\right\}},&    {\P}_z(\mathcal{L}^n\in\mathcal{C}(\mathscr{S}_0))>0\\
\infty,&  {\P}_z(\mathcal{L}^n\in\mathcal{C}(\mathscr{S}_0))=0    \end{array}\right.. \label{eq-W_S0^n}
\end{equation}
We name $W^n_{\mathscr{S}_0}(z)$ the restricted Laplace functional; and now present a generalized Laplace principle of $W^n_{\mathscr{S}_0}(z)$ in \cite{HLX2022}, which is a pivotal theorem in this paper.
		
	\begin{theorem}[\cite{HLX2022}, Theorem 2.7]\label{estimate for rate}
For any bounded and continuous function $h$ on $\mathscr{P}(V_S),$
		\begin{equation}
		\lim_{n\to\infty}W^n_{\mathscr{S}_0}(z) = \inf_{\mu\in \mathcal{C}(\mathscr{S}_0)}\left\{\Lambda^z_{\delta,\mathscr{S}_0} (\mu)+h(\mu)\right\},\label{our variational representation}
		\end{equation}
		where
		\begin{align}
		{\Lambda}^z_{\delta,\mathscr{S}_0}(\mu)=\inf_{\stackrel{(\mu_k,r_k,E_k)_{k}\in\mathscr{A}_z(\mu,\mathscr{S}_0)}{\qq_{k}\in\mathscr{T}_S:\, \mu_{k}\qq_{k}=\mu_{k}}}
		\sum_{k=1}^{\b}r_k\int_{V_S}R(\qq_{k}\|\pp_{E_k})\ {\rm d}\mu_{k}.\label{Lambda^z}
		\end{align}
	\end{theorem}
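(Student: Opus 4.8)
The final statement to prove is Theorem~\ref{estimate for rate}, the generalized Laplace principle for the restricted Laplace functional $W^n_{\mathscr{S}_0}(z)$. Although this is cited from \cite{HLX2022}, I will sketch how I would reconstruct the proof using the weak convergence (Dupuis--Ellis) approach, since that is the natural framework here.

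\textbf{Overall strategy.} The plan is to establish the two matching inequalities (Laplace upper and lower bounds) for $W^n_{\mathscr{S}_0}(z)$ via a variational/weak-convergence argument on the non-homogeneous Markov process $(\mathcal{Z}_n,\mathcal{L}^n)_{n\ge0}$. First I would write the restricted Laplace functional as a stochastic control problem: by the Markov structure \eqref{Z_n,L_n} and a measurable-selection / dynamic-programming argument, one has
\begin{equation*}
  W^n_{\mathscr{S}_0}(z)=\inf_{\bar{q}}\ \mathbb{E}^{\bar{q}}_z\!\left[\frac{1}{n}\sum_{i=0}^{n-1}R\bigl(\bar q_i(\,\cdot\,)\,\big\|\,\pp_{E_i^\omega}(\bar{\mathcal{Z}}_i,\cdot)\bigr)+h(\bar{\mathcal{L}}^n)+\infty\cdot\mathbf{1}_{\{\bar{\mathcal{L}}^n\notin\mathcal{C}(\mathscr{S}_0)\}}\right],
\end{equation*}
where the infimum is over controlled transition kernels $\bar q$ and $\bar{\mathcal{Z}},\bar{\mathcal{L}}$ are the controlled process and its empirical measure. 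This representation is the entropy/Donsker--Varadhan duality applied step by step.

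\textbf{Lower bound (hard direction).} I would take a near-optimal sequence of controls $\bar q^{(n)}$ achieving $W^n_{\mathscr{S}_0}(z)$ up to $o(1)$, with uniformly bounded relative-entropy cost. The key is to identify the limit of the controlled empirical measures. The process spends ``macroscopic time fractions'' $r_k$ in each regime where exactly the edge set $E_k$ has been traversed, with $\{E_k\}_{1\le k\le\b}\in\mathscr{E}_z$ (this is where $\mathscr{E}_z$ enters: $E_1=\{z|_E\}$ because $\mathcal{Z}_0=z$); on the $k$-th stretch the occupation measure converges to some $\mu_k$ with $\mathrm{supp}(\mu_k|_E)\subseteq E_k$ and, being a limiting empirical measure of a Markov chain with limiting kernel $\qq_k$, it must be $\qq_k$-invariant, $\mu_k\qq_k=\mu_k$. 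Lower semicontinuity and convexity of $(\mu,q)\mapsto R(q\|\pp_{E_k})$ under the weak topology, together with the constraint that stretches with $E_l\notin\mathscr{S}_0$ contribute zero time ($r_l=0$) because of the indicator $\mathbf{1}_{\{\mathcal{L}^n\in\mathcal{C}(\mathscr{S}_0)\}}$, then yields
$\liminf_n W^n_{\mathscr{S}_0}(z)\ge \inf_{\mu\in\mathcal{C}(\mathscr{S}_0)}\{\Lambda^z_{\delta,\mathscr{S}_0}(\mu)+h(\mu)\}$. Tightness of the controlled occupation measures (finite state space makes this automatic) and a careful time-decomposition at the renewal times $\tau_k$ of \eqref{def of renewal time} are the technical heart.

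\textbf{Upper bound.} For the reverse inequality I would fix any $\mu\in\mathcal{C}(\mathscr{S}_0)$ and any near-optimal decomposition $(\mu_k,r_k,E_k)_k\in\mathscr{A}_z(\mu,\mathscr{S}_0)$ with invariant kernels $\qq_k$, then build an explicit control: run the process using $\qq_1$ until time $\approx r_1 n$ (staying inside $E_1$), forcing a traversal of a new edge to move to regime $E_2$, then use $\qq_2$ for time $\approx r_2 n$, and so on; the small extra cost of the forced transitions is $o(n)$. By the ergodic theorem for each (finite-state, irreducible-on-its-support) chain $\qq_k$, the empirical measure of this controlled process converges to $\sum_k r_k\mu_k=\mu\in\mathcal{C}(\mathscr{S}_0)$, so the constraint indicator is eventually satisfied; the averaged relative-entropy cost converges to $\sum_k r_k\int R(\qq_k\|\pp_{E_k})\,\mathrm{d}\mu_k$, and $h(\bar{\mathcal{L}}^n)\to h(\mu)$ by continuity. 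Taking infima gives $\limsup_n W^n_{\mathscr{S}_0}(z)\le \inf_{\mu\in\mathcal{C}(\mathscr{S}_0)}\{\Lambda^z_{\delta,\mathscr{S}_0}(\mu)+h(\mu)\}$.

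\textbf{Main obstacle.} The principal difficulty is the lower bound, specifically proving that any limit point of the controlled empirical measures genuinely decomposes along a renewal sequence in $\mathscr{E}_z$ with invariant components --- i.e.\ that the ``phase structure'' is respected in the limit. Because $\pp_{E_n^\omega}$ depends on the running empirical measure only through its support, the cost functional is piecewise constant in a subtle way across the renewal epochs $\tau_k$, and one must rule out pathological controls that oscillate the support. Handling this requires a stopping-time decomposition together with the observation that each $\tau_{k+1}-\tau_k$ is either $o(n)$ or $\Theta(n)$ along subsequences, plus lower semicontinuity to pass the entropy cost to the limit; this is exactly the step worked out in \cite{HLX2022}.
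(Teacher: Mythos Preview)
The paper does not contain a proof of Theorem~\ref{estimate for rate}; it is quoted verbatim from \cite{HLX2022} (their Theorem~2.7) and used as a black box to derive Theorem~\ref{exp decay} and the subsequent results. So there is nothing in the present paper to compare your argument against.

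That said, your sketch is a faithful outline of the Dupuis--Ellis weak-convergence scheme that \cite{HLX2022} announces (and that the present paper alludes to in Section~\ref{sec 2.15} by introducing the lifted graph and the non-homogeneous Markov pair $(\mathcal{Z}_n,\mathcal{L}^n)$): the stepwise entropy representation of $W^n_{\mathscr{S}_0}$, the decomposition of the controlled trajectory along the renewal times $\tau_k$ of \eqref{def of renewal time}, the identification of the limiting occupation measures $\mu_k$ as $\qq_k$-invariant with $\mathrm{supp}(\mu_k|_E)\subseteq E_k$, and the explicit ``piecewise-stationary'' control for the upper bound are exactly the ingredients one expects. Your identification of the main obstacle---controlling the phase/support structure in the lower bound so that limit points land in $\mathscr{A}_z(\mu,\mathscr{S}_0)$---is also on target; this is precisely where the ORRW-specific work lies and what the reference \cite{HLX2022} handles. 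In short: your proposal is a reasonable reconstruction, but strictly speaking no comparison with the present paper is possible because the proof lives entirely in the cited reference.
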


\begin{lemma}[\cite{HLX2022}, Lemma 3.6]\label{inf attainment}
	For fixed $\mu\in\mathscr{P}(V_S)$ and $E'\subseteq E$, set $\nu=T(\mu)$, where $T(\mu)(u)=\mu(\{z:z^-=u\})$ for any $\mu\in\mathscr{P}(V_S)$ and $u\in V$. Then
	\[
	\inf_{\qq\in\mathscr{T}_S:\,\mu \qq=\mu}\int_{V_S}R(\qq\|\pp_{E'})\ {\rm d}\mu=\inf_{\hat{q}\in\mathscr{T}_G:\,\forall z\in V_S, \nu(z^-)\hat{q}(z^-,z^+)=\mu(z)}\int_{V}R(\hat{q}\|\hat{p}_{E'})\ {\rm d}\nu.
	\]
\end{lemma}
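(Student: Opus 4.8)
The plan is to establish the identity by proving two inequalities, exploiting the fact that $\pp_{E'}$ is a ``lifted'' version of $\hat p_{E'}$ and that $R(\cdot\|\cdot)$ is the relative entropy of transition kernels integrated against an invariant measure. First I would set up notation: given $\mu\in\mathscr P(V_S)$ with $\nu=T(\mu)$, note that for $z\in V_S$ with $z^-=u$, $z^+=w$, the kernel $\pp_{E'}(z,\cdot)$ is supported on $\{z':z'^-=w\}$ and, by Definition \ref{Def for p_u}, $\pp_{E'}(z,z')$ depends on $z$ only through $z^+=w$; in fact $\pp_{E'}(z,z')=\hat p_{E'}(w,z'^+)$ whenever $z\to z'$. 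This is the structural observation that drives both directions.

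For the direction ``$\le$'', I would take any admissible $\hat q\in\mathscr T_G$ on the right-hand side, i.e. with $\nu(z^-)\hat q(z^-,z^+)=\mu(z)$ for all $z\in V_S$, and build a lifted kernel $\qq\in\mathscr T_S$ by $\qq(z,z'):=\hat q(z^+,z'^+)$ for $z\to z'$ (and $0$ otherwise). One checks $\mu\qq=\mu$: for $z'\in V_S$, $\sum_z \mu(z)\qq(z,z')=\sum_{z:z^+=z'^-}\mu(z)\hat q(z'^-,z'^+)$, and $\sum_{z:z^+=z'^-}\mu(z)=\nu(z'^-)$ by definition of $T$, so this equals $\nu(z'^-)\hat q(z'^-,z'^+)=\mu(z')$. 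Then, using that relative entropy of a ``pushed-forward'' kernel matches because both $\qq(z,\cdot)$ and $\pp_{E'}(z,\cdot)$ factor through $z^+$, a direct computation gives $\int_{V_S}R(\qq\|\pp_{E'})\,{\rm d}\mu=\sum_z\mu(z)\sum_{z'}\qq(z,z')\log\frac{\qq(z,z')}{\pp_{E'}(z,z')}=\sum_u\nu(u)\sum_{w}\hat q(u,w)\log\frac{\hat q(u,w)}{\hat p_{E'}(u,w)}=\int_V R(\hat q\|\hat p_{E'})\,{\rm d}\nu$, so the left side is $\le$ the right side.

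For the reverse direction ``$\ge$'', I would take any $\qq\in\mathscr T_S$ with $\mu\qq=\mu$ and project it: define $\hat q(u,w):=\frac{1}{\nu(u)}\sum_{z:z^-=u}\mu(z)\,\qq\big(z,\overrightarrow{uw}\big)$ on $\{u:\nu(u)>0\}$ (arbitrary neighbour-supported completion elsewhere), which is a valid element of $\mathscr T_G$. The invariance $\mu\qq=\mu$ should translate into $\nu(u)\hat q(u,w)=\mu(\overrightarrow{uw})$: indeed $\mu(\overrightarrow{uw})=(\mu\qq)(\overrightarrow{uw})=\sum_{z:z^+=u}\mu(z)\qq(z,\overrightarrow{uw})$, and one checks this matches $\nu(u)\hat q(u,w)$ after regrouping — this bookkeeping is where care is needed. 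Finally, by convexity of relative entropy (the log-sum inequality / joint convexity of $(a,b)\mapsto a\log(a/b)$), averaging the kernels $\qq(z,\cdot)$ over $z$ with $z^-=u$ against weights $\mu(z)/\nu(u)$ can only decrease the entropy relative to $\pp_{E'}$ (which is constant in $z$ over such $z$), giving $\int_{V_S}R(\qq\|\pp_{E'})\,{\rm d}\mu\ge \int_V R(\hat q\|\hat p_{E'})\,{\rm d}\nu$; taking infima yields ``$\ge$''.

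The main obstacle I anticipate is the careful matching of invariance conditions and the index bookkeeping between the ``head'' and ``tail'' descriptions of directed edges in $V_S$: the constraint on the right is phrased via $z^-$ whereas $\mu\qq=\mu$ naturally sums over $z$ with $z^+$ equal to the relevant vertex, so one must be attentive that $\nu=T(\mu)$ records tails and verify that $\sum_{z:z^+=u}\mu(z)=\nu(u)$ as well (this holds because $\mu$ is $\qq$-invariant and $\qq$ is nearest-neighbour on $V_S$, equivalently because the marginal on heads equals the marginal on tails for a stationary chain on $V_S$). Once that symmetry is pinned down, the entropy computation in the ``$\le$'' direction is an exact identity and the ``$\ge$'' direction is a clean application of convexity, so the remainder is routine. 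I would also remark that the infimum on the left is attained, since both sides are infima of the same quantity over matching feasible sets once the correspondence $\qq\leftrightarrow\hat q$ is shown to be essentially a bijection on the relevant (invariance-constrained) subsets.
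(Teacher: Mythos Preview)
The paper does not supply a proof of this lemma; it is quoted from \cite{HLX2022} (their Lemma~3.6), so there is no in-paper argument against which to compare your proposal directly. That said, your two-inequality strategy---lift $\hat q$ to $\qq$ for ``$\le$'', project $\qq$ to $\hat q$ and invoke convexity of relative entropy for ``$\ge$''---is the natural approach and is essentially correct.

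Two points need tightening. First, there is an index slip in your projection step: since $\qq(z,\overrightarrow{uw})>0$ forces $z^+=u$, the sum $\sum_{z:z^-=u}\mu(z)\,\qq(z,\overrightarrow{uw})$ you wrote is identically zero; the definition must read $\hat q(u,w)=\nu(u)^{-1}\sum_{z:z^+=u}\mu(z)\,\qq(z,\overrightarrow{uw})$, and the convexity averaging is over $\{z:z^+=u\}$, not $\{z:z^-=u\}$. Second, the head--tail identity $\sum_{z:z^+=u}\mu(z)=\nu(u)$ that you flag at the end is needed in \emph{both} directions, not only ``$\ge$'': in ``$\le$'' it is precisely what makes your check of $\mu\qq=\mu$ succeed and what turns the entropy calculation into an exact identity (your inner sum depends on $z$ only through $z^+$, so it collapses to the head marginal, not the tail marginal $\nu$). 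The clean resolution is that whenever the left-hand constraint set is nonempty---some $\qq_0\in\mathscr T_S$ with $\mu\qq_0=\mu$ exists---summing $\mu(z')=\sum_z\mu(z)\qq_0(z,z')$ over $z'$ with $z'^-=u$ gives $\sum_{z:z^+=u}\mu(z)=\nu(u)$. Without that hypothesis the left side of the lemma is $+\infty$ while the right side is finite, so the identity should be read under the standing assumption (satisfied in every application in the paper) that $\mu$ is invariant for some $\qq\in\mathscr T_S$.
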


By Theorem \ref{estimate for rate}, we deduce the following theorem (see the proof in Section \ref{sec 4.1}).

	\begin{theorem}\label{exp decay}
$\mathbb{P}_{x_0}\left(\mathcal{T}_{\mathscr{S}_0}>n\right)$ decays exponentially with the rate $\inf\limits_{\mu\in \mathcal{C}(\mathscr{S}_0)}{\Lambda}_{\delta,\mathscr{S}_0}(\mu)\in (0,\infty)$:
		\begin{equation}\label{exp-decay 1}
		\lim_{n\to\infty}\frac{1}{n}\log \mathbb{P}_{x_0}\left(\mathcal{T}_{\mathscr{S}_0}>n\right)=-\inf_{\mu\in\mathcal{C}(\mathscr{S}_0)}{\Lambda}_{\delta,\mathscr{S}_0}(\mu).
		\end{equation}
	\end{theorem}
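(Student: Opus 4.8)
The plan is to deduce this from the generalized Laplace principle (Theorem \ref{estimate for rate}) by choosing an appropriate test function $h$ and exploiting the fact that the event $\{\mathcal{T}_{\mathscr{S}_0}>n\}$ is, up to the trajectory being stopped at renewal times, exactly the event $\{\mathcal{L}^n\in\mathcal{C}(\mathscr{S}_0)\}$. More precisely, by definition of $\mathcal{T}_{\mathscr{S}_0}$, the event $\{\mathcal{T}_{\mathscr{S}_0}>n\}$ says that the edge set $E_n^\omega$ traversed up to time $n$ still belongs to $\mathscr{S}_0$; since $\mathscr{S}_0$ is decreasing and $\mathrm{supp}(\mathcal{L}^n|_E)=E_n^\omega$, this is precisely $\{\mathcal{L}^n\in\mathcal{C}(\mathscr{S}_0)\}$. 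Hence $\mathbb{P}_{x_0}(\mathcal{T}_{\mathscr{S}_0}>n)=\mathbb{P}_{z_0}(\mathcal{L}^n\in\mathcal{C}(\mathscr{S}_0))$ where $z_0$ ranges over the (finitely many) lifted vertices with $z_0^-=x_0$, summed with the appropriate first-step weights; this reduces the problem to the lifted process started at a single $z\in V_S$.

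First I would apply Theorem \ref{estimate for rate} with $h\equiv 0$. This gives
\[
\lim_{n\to\infty}\Big(-\tfrac1n\log\mathbb{P}_z(\mathcal{L}^n\in\mathcal{C}(\mathscr{S}_0))\Big)=\inf_{\mu\in\mathcal{C}(\mathscr{S}_0)}\Lambda^z_{\delta,\mathscr{S}_0}(\mu),
\]
provided $\mathbb{P}_z(\mathcal{L}^n\in\mathcal{C}(\mathscr{S}_0))>0$ for all large $n$, which holds because one can keep the walk traversing only edges inside a fixed $E_0\in\mathscr{S}_0$ containing $z|_E$ (recall $|V|\ge 3$, and $\mathscr{S}_0\neq\emptyset$ is decreasing so it contains a singleton edge set through $x$, and the lifted graph restricted to such $E_0$ has a cycle allowing the empirical measure to stay in $\mathcal{C}(\mathscr{S}_0)$ indefinitely). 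Next I would remove the dependence on the starting vertex $z$: by the first-step decomposition above and finiteness of $V_S$,
\[
\lim_{n\to\infty}\tfrac1n\log\mathbb{P}_{x_0}(\mathcal{T}_{\mathscr{S}_0}>n)=-\min_{z:\,z^-=x_0}\inf_{\mu\in\mathcal{C}(\mathscr{S}_0)}\Lambda^z_{\delta,\mathscr{S}_0}(\mu),
\]
and I would argue that the right-hand side equals $-\inf_{\mu\in\mathcal{C}(\mathscr{S}_0)}\Lambda_{\delta,\mathscr{S}_0}(\mu)$. This last identity is the crux: comparing \eqref{rate function} with \eqref{Lambda^z}, the only difference is that $\Lambda^z$ restricts to decompositions in $\mathscr{A}_z$ (renewal sequences with $E_1=\{z|_E\}$) whereas $\Lambda$ allows all of $\mathscr{A}$. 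Since every $\{E_k\}\in\mathscr{E}$ has $E_1=\{x_0 v\}$ for some $v$ (condition \textbf{A3}), taking the minimum over the finitely many admissible starting edges $z$ at $x_0$ recovers exactly the union $\bigcup_z\mathscr{A}_z(\mu,\mathscr{S}_0)=\mathscr{A}(\mu,\mathscr{S}_0)$, so the min-over-$z$ of the infima equals the single infimum defining $\Lambda_{\delta,\mathscr{S}_0}$.

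Finally I would check that $\inf_{\mu\in\mathcal{C}(\mathscr{S}_0)}\Lambda_{\delta,\mathscr{S}_0}(\mu)\in(0,\infty)$. Finiteness follows from exhibiting one admissible configuration: fix $E_0\in\mathscr{S}_0$ with $x_0$ a vertex of $E_0$, run along a spanning structure of $E_0$ to realize a renewal sequence $\{E_k\}$, put all mass $r_{k_0}=1$ on the stage $k_0=|E_0|$ with $\mu_{k_0}$ a stationary measure of some $\qq\in\mathscr{T}_S$ supported on the lifted cycle inside $E_0$ whose relative entropy against $\pp_{E_0}$ is finite (possible since $\pp_{E_0}(z_1,z_2)>0$ whenever $z_1\to z_2$ and $z_2|_E\in E_0$). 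Strict positivity follows because $\Lambda^z_{\delta,\mathscr{S}_0}$, hence $\Lambda_{\delta,\mathscr{S}_0}$, is a good rate function that vanishes only on the (unique, by irreducibility of the SRW on the connected $G$) invariant measure of $\pp_E$, whose $E$-support is all of $E$ and therefore lies outside the closed set $\mathcal{C}(\mathscr{S}_0)$; equivalently, if the infimum were $0$ one could produce arbitrarily good near-stationary configurations confined to $\mathcal{C}(\mathscr{S}_0)$, contradicting that the true ORRW almost surely exits every proper $\mathscr{S}_0$ in finite time together with a compactness argument on $\mathscr{P}(V_S)$. I expect the main obstacle to be the rigorous justification of this positivity — i.e., ruling out that a sequence $\mu_n\in\mathcal{C}(\mathscr{S}_0)$ with $\Lambda_{\delta,\mathscr{S}_0}(\mu_n)\to 0$ exists — which requires the lower semicontinuity and compact sublevel sets of $\Lambda_{\delta,\mathscr{S}_0}$ (inherited from \cite{HLX2022}) plus the fact that any zero of $\Lambda_{\delta,\mathscr{S}_0}$ must be an invariant measure whose support is not contained in any $E'\in\mathscr{S}_0\subsetneq\mathscr{S}$.
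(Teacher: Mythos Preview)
Your proposal is correct and follows essentially the same route as the paper: identify $\{\mathcal{T}_{\mathscr{S}_0}>n\}=\{\mathcal{L}^n\in\mathcal{C}(\mathscr{S}_0)\}$, apply Theorem~\ref{estimate for rate} with a test function vanishing on $\mathcal{C}(\mathscr{S}_0)$ (the paper allows any such $h_0$; your choice $h\equiv 0$ is the simplest instance), and then pass from $\Lambda^z_{\delta,\mathscr{S}_0}$ to $\Lambda_{\delta,\mathscr{S}_0}$ via the first-step decomposition and the identity $\min_{z:\,z^-=x_0}\inf_\mu\Lambda^z_{\delta,\mathscr{S}_0}(\mu)=\inf_\mu\Lambda_{\delta,\mathscr{S}_0}(\mu)$, which you justify exactly as the paper does (implicitly) through $\bigcup_z\mathscr{A}_z(\mu,\mathscr{S}_0)=\mathscr{A}(\mu,\mathscr{S}_0)$. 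Your additional discussion of why the infimum lies in $(0,\infty)$ is extra relative to the paper's proof of this particular theorem, which simply asserts it; the paper effectively defers finiteness and positivity to the surrounding machinery (Proposition~\ref{another expression of alpha_c} and Theorem~\ref{m-exp decay}(b)).
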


\section{Variational representations of  critical exponent}\label{sec 4.1}
\noindent
An alternative perspective is that Theorem \ref{m-exp decay} presents a concise variational formula (\ref{new exp. of alpha_c})  of  $\alpha_c(\delta)$, the critical exponent for the exponential integrability of $\mathcal{T}_{\mathscr{S}_0}$ through the estimate of the tail probability $\mathbb{P}_{x_0}\{\mathcal{T}_{\mathscr{S}_0}>n\}$.

In order to show Theorem \ref{m-exp decay}, we derive the asymptotic estimate of $\frac{1}{n}\log \mathbb{P}_{x_0}\{\mathcal{T}_{\mathscr{S}_0}>n\}$ in Theorem \ref{exp decay}, a corollary of Theorem \ref{estimate for rate} on lifted directed graphs.  Furthermore, we simplify the variational representation of $\alpha_c(\delta)$ in Theorem \ref{exp decay} through Proposition \ref{another expression of alpha_c}, offering a more convenient approach to demonstrate   Theorem \ref{m-exp decay}.
	Additionally, by Theorem \ref{m-exp decay} (a) and Theorem \ref{exp decay}, we can express $\alpha_c$ as 
	\begin{equation}
	\alpha_c(\delta)=\inf_{\mu\in\mathcal{C}(\mathscr{S}_0)}{\Lambda}_{\delta,\mathscr{S}_0}(\mu), \label{eq-representation-alpha-c} 
	\end{equation}
	a formulation that will be utilized in Section \ref{sec 4.2} and Appendices \ref{sec-pf-prop-4.1}, \ref{sec-pf-prop-4.2}.

 At first, we prove Theorem  \ref{exp decay}.

 \begin{proof}[Proof of Theorem \ref{exp decay}]
Noting that $\{\mathcal{L}^n\in\mathcal{C}(\mathscr{S}_0)\}=\{\mathcal{T}_{\mathscr{S}_0}>n\}$,
	it suffices to show that for any $z\in V_S$ with $z^-=x,$
\[
\lim_{n\to\infty}\frac{1}{n}\log \mathbb{P}_{x_0}\left(\left.\mathcal{L}^n\in\mathcal{C}(\mathscr{S}_0)\right\vert \mathcal{Z}_0=z \right)=-\inf_{\mu\in\mathcal{C}(\mathscr{S}_0)}{\Lambda}^z_{\delta,\mathscr{S}_0}(\mu),
\]
since 
$\inf_{\mu\in\mathcal{C}(\mathscr{S}_0)}{\Lambda}_{\delta,\mathscr{S}_0}(\mu)=\min_{z:\,z^-=x}\inf_{\mu\in\mathcal{C}(\mathscr{S}_0)}{\Lambda}^z_{\delta,\mathscr{S}_0}(\mu),$
and
\[\lim_{n\to\infty}\frac{1}{n}\log \mathbb{P}_{x_0}\left(\mathcal{L}^n\in\mathcal{C}(\mathscr{S}_0)\right)=\max_{z:\,z^-=x}\lim_{n\to\infty}\frac{1}{n}\log \mathbb{P}_{x_0}\left(\left.\mathcal{L}^n\in\mathcal{C}(\mathscr{S}_0)\right\vert \mathcal{Z}_0=z\right).\]

Let $h_0$ be some bounded continuous function vanishing on $\mathcal{C}(\mathscr{S}_0)$. Then
\begin{align}\label{(4.1.1)}
\lim_{n\to\infty}\frac{1}{n}\log \mathbb{E}_{x_0}\left[\left. e^{-nh_0(\mathcal{L}^n)}\textbf{1}_{\{\mathcal{L}^n\in\mathcal{C}(\mathscr{S}_0)\}}\right\vert \mathcal{Z}_0=z\right]=\lim_{n\to\infty}\frac{1}{n}\log \mathbb{P}_{x_0}\left(\left.\mathcal{L}^n\in\mathcal{C}(\mathscr{S}_0)\right\vert \mathcal{Z}_0=z\right).
\end{align}

Applying  Theorem \ref{estimate for rate} to $h_0$, we obtain
\begin{equation}\label{(4.1.3)}
\lim_{n\to\infty}\frac{1}{n}\log \mathbb{E}_{x_0}\left[\left. e^{-nh_0(\mathcal{L}^n)}\textbf{1}_{\{\mathcal{L}^n\in\mathcal{C}(\mathscr{S}_0)\}}\right\vert \mathcal{Z}_0=z\right]
=\inf_{\mu\in\mathcal{C}(\mathscr{S}_0)}\Lambda_{\delta,\mathscr{S}_0}^z(\mu).
\end{equation}
By (\ref{(4.1.1)}) and (\ref{(4.1.3)}), we complete the proof.
\end{proof}

\begin{proposition}\label{another expression of alpha_c}
 $\inf\limits_{\mu\in\mathcal{C}(\mathscr{S}_0)}\Lambda_{\delta,\mathscr{S}_0}(\mu)$ can also be expressed as			
\begin{align}\label{exprs1 alpha_c}
	\inf_{\mu\in\mathcal{C}(\mathscr{S}_0)}\Lambda_{\delta,\mathscr{S}_0}(\mu)=&\inf_{\stackrel{\left(\mu,E_0,q\right):E_0\in\mathscr{S}_0,\qq\in\mathscr{T}_S,}{\mu\in\mathscr{P}(V_S), \text{\rm supp}(\mu|_E)\subseteq E_0,\mu \qq=\mu}}\int_{V_S} R(\qq\|\pp_{E_0})\ {\rm d}\mu. 
\end{align}	
\end{proposition}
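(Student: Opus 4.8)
The plan is to establish \eqref{exprs1 alpha_c} by proving the two inequalities between its sides separately; write $I$ for the right-hand side of \eqref{exprs1 alpha_c}. The inequality $\inf_{\mu\in\mathcal{C}(\mathscr{S}_0)}\Lambda_{\delta,\mathscr{S}_0}(\mu)\ge I$ I expect to be essentially immediate from the definitions. Fix $\mu\in\mathcal{C}(\mathscr{S}_0)$ and any tuple $\big((\mu_k,r_k,E_k)_k,(\qq_k)_k\big)$ entering the infimum \eqref{rate function}, so $(\mu_k,r_k,E_k)_k\in\mathscr{A}(\mu,\mathscr{S}_0)$ and $\mu_k\qq_k=\mu_k$. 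For every index $k$ with $r_k>0$ one has $E_k\in\mathscr{S}_0$ (since $r_l=0$ whenever $E_l\notin\mathscr{S}_0$), together with $\qq_k\in\mathscr{T}_S$, $\mathrm{supp}(\mu_k|_E)\subseteq E_k$ and $\mu_k\qq_k=\mu_k$; hence the triple $(\mu_k,E_k,\qq_k)$ is admissible for $I$ and $\int_{V_S}R(\qq_k\|\pp_{E_k})\,\mathrm d\mu_k\ge I$. Since $\sum_k r_k=1$ and the terms with $r_k=0$ drop out, the cost of the tuple is $\ge I$; taking the infimum over tuples and then over $\mu\in\mathcal{C}(\mathscr{S}_0)$ gives the claim (the case $\mathscr{A}(\mu,\mathscr{S}_0)=\emptyset$ being trivial since then $\Lambda_{\delta,\mathscr{S}_0}(\mu)=\infty$).

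For the reverse inequality I would fix a triple $(\mu,E_0,\qq)$ admissible for $I$ — so $E_0\in\mathscr{S}_0$, $\qq\in\mathscr{T}_S$, $\mathrm{supp}(\mu|_E)\subseteq E_0$ and $\mu\qq=\mu$, and in particular $\mu\in\mathcal{C}(\mathscr{S}_0)$ — and exhibit a single tuple in $\mathscr{A}(\mu,\mathscr{S}_0)$, with associated stationary kernels, whose cost in \eqref{rate function} equals $\int_{V_S}R(\qq\|\pp_{E_0})\,\mathrm d\mu$; this yields $\Lambda_{\delta,\mathscr{S}_0}(\mu)\le\int_{V_S}R(\qq\|\pp_{E_0})\,\mathrm d\mu$ and then $\inf_{\mu\in\mathcal{C}(\mathscr{S}_0)}\Lambda_{\delta,\mathscr{S}_0}(\mu)\le I$ on taking the infimum over $(\mu,E_0,\qq)$. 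Concretely, set $k_0:=|E_0|$; since $E_0$ induces a connected graph containing $x$ through some edge $\{x,v\}\in E_0$, I would order $E_0$ by a connected edge-exploration as $e_1=\{x,v\},\dots,e_{k_0}$ with all partial unions $E_j:=\{e_1,\dots,e_j\}$ connected and $e_{j+1}$ adjacent to $E_j$, and then keep appending edges of $E\setminus E_0$ adjacent to the growing set until all of $E$ is used, producing a chain $\{E_k\}_{1\le k\le\b}\in\mathscr{E}$ with $E_{k_0}=E_0$. I would then take $r_{k_0}=1$, $\mu_{k_0}=\mu$, $\qq_{k_0}=\qq$, and for $k\ne k_0$ set $r_k=0$, let $\qq_k\in\mathscr{T}_S$ be a nearest-neighbour kernel extending the lift to $V_S$ of the simple random walk on the connected graph $(V_k,E_k)$, and $\mu_k\in\mathscr{P}(V_S)$ its stationary distribution, which is supported on $\{z:z|_E\in E_k\}$ so that $\mathrm{supp}(\mu_k|_E)\subseteq E_k$ and $\mu_k\qq_k=\mu_k$. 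Checking that $(\mu_k,r_k,E_k)_k\in\mathscr{A}(\mu,\mathscr{S}_0)$ is then routine. Since $\pp_{E_k}(z_1,\cdot)$ charges every out-neighbour of $z_1$ (as $\delta>0$), each $\qq_k(z_1,\cdot)$ is absolutely continuous with respect to $\pp_{E_k}(z_1,\cdot)$, so $\int_{V_S}R(\qq_k\|\pp_{E_k})\,\mathrm d\mu_k$ is finite and is multiplied by $r_k=0$; hence the total cost of this tuple is exactly $\int_{V_S}R(\qq_{k_0}\|\pp_{E_{k_0}})\,\mathrm d\mu_{k_0}=\int_{V_S}R(\qq\|\pp_{E_0})\,\mathrm d\mu$.

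The only genuinely non-formal step, and the one I expect to be the main obstacle, is the construction of the chain $\{E_k\}\in\mathscr{E}$ passing through the prescribed $E_0$ — that is, checking that conditions \textbf{A1}--\textbf{A4} can be arranged simultaneously with $E_{k_0}=E_0$. This reduces to the standard graph-theoretic fact that a connected graph admits an ordering of its edges along which every initial segment induces a connected subgraph and which may be started at any prescribed edge: grow a connected edge set one adjacent edge at a time, which never gets stuck because the ambient graph is connected (when the current vertex set is not yet all of $V_0$, connectivity supplies an edge leaving it; otherwise any remaining edge already touches the current vertex set). Once such an ordering of $E_0$ is in hand, extending it through $E\setminus E_0$ and reading off \textbf{A1}--\textbf{A4} is immediate, and everything else is bookkeeping with the definitions of $\mathscr{A}(\mu,\mathscr{S}_0)$, $\mathcal{C}(\mathscr{S}_0)$, $\Lambda_{\delta,\mathscr{S}_0}$ and $\pp_{E_k}$ (using that $\pp_\mu$ depends only on $\mathrm{supp}(\mu)$).
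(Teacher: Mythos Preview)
Your proof is correct and follows essentially the same approach as the paper: for the inequality $\ge I$ you bound the convex combination from below by the minimum of its terms (each admissible for $I$ since $r_k>0$ forces $E_k\in\mathscr{S}_0$), and for $\le I$ you concentrate all the weight at the single index $k_0=|E_0|$ after building a chain $\{E_k\}\in\mathscr{E}$ through $E_0$. Your treatment is in fact slightly more careful than the paper's on two minor points---ensuring $\mu_k\qq_k=\mu_k$ also for the irrelevant indices $k\neq k_0$ and noting that the associated relative entropies are finite so no $0\cdot\infty$ issue arises---but the substance is the same.
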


We will show this proposition in Appendix \ref{sec-pf-prop-3.1}.  Now, it is time to prove  Theorem \ref{m-exp decay}.

  \begin{proof}[Proof of Theorem \ref{m-exp decay} (a)]  Due to Theorem \ref{exp decay} and Proposition \ref{another expression of alpha_c}, we only need to show
  \begin{equation}\label{exprss1-1 alpha_c}
     \inf_{\stackrel{\left(\mu,E_0,q\right):E_0\in\mathscr{S}_0,\qq\in\mathscr{T}_S,}{\mu\in\mathscr{P}(V_S), \text{\rm supp}(\mu|_E)\subseteq E_0,\mu \qq=\mu}}\int_{V_S} R(\qq\|\pp_{E_0})\ {\rm d}\mu= \inf_{\stackrel{\left(\nu,E_0,\hat{q}\right): E_0\in\mathscr{S}_0,\ \hat{q}\in\mathscr{T}(E_0)}{\nu\in\mathscr{P}(V),\ \nu \hat{q}=\nu,\ {\rm supp}(\nu)\subseteq V_0 }}\int_{V} R(\hat{q}\|\hat{p}_{E_0})\ {\rm d}\nu.
   \end{equation}
  By Lemma \ref{inf attainment}, it is sufficient to verify that the right hand side (RHS) of \eqref{exprss1-1 alpha_c} equals
       \begin{equation}\label{exprss1-2 alpha_c}
    \inf_{\stackrel{\left(\mu,E_0,\hat{q}\right): {\rm supp}(\mu|_E)\subseteq   E_0\in\mathscr{S}_0}{T(\mu)(z^-)\hat{q}(z^-,z^+)=\mu(z)}}\int_{V} R(\hat{q}\|\hat{p}_{E_0})\ {\rm d}T(\mu),
      \end{equation}
 where $T(\mu)(u)=\mu(\{z:z^-=u\})$ for any $\mu\in\mathscr{P}(V_S)$ and $u\in V$.

Let
$$\mathcal{I}_{\rm nf}=\left\{\left(\nu,E_0,\hat{q}\right):\ E_0\in\mathscr{S}_0,\ \hat{q}\in\mathscr{T}(E_0),\ \nu\in\mathscr{P}(V),\ \nu \hat{q}=\nu,\ {\rm supp}(\nu)\subseteq V_0\right\},$$
$$\mathcal{I}_{\rm nf}^T=\left\{\left(\mu,E_0,\hat{q}\right):\ {\rm supp}(\mu|_E)\subseteq   E_0\in\mathscr{S}_0,\ T(\mu)(z^-)\hat{q}(z^-,z^+)=\mu(z),\ z\in V_S\right\}.$$

Given any $\left(\mu,E_0,\hat{q}\right)\in \mathcal{I}_{\rm nf}^T.$ Let $\nu=T(\mu)$. Summing $\nu(z^-)\hat{q}(z^-,z^+)=\mu(z)$ over all $z$ with $z^+=y$ for some $y\in V$, we get that $\nu\hat{q}=\nu$, and
$$``\hat{q}\in\mathscr{T}(E_0),\ {\rm supp}(\nu)\subseteq V_0"\Longleftrightarrow ``\mbox{for}\ x,y\in V,\ \nu(x)\hat{q}(x,y)>0\Rightarrow xy\in E_0".$$
However, for any $x,y\in V$ with $\nu(x)\hat{q}(x,y)>0,$ since
$\nu(x)\hat{q}(x,y)=\mu(z)\ \mbox{for}\ z\ \mbox{with}\ z^-=x,z^+=y,$
we have $z\in{\rm supp}(\mu)$ and $xy\in E_0$. Hence, $\hat{q}\in\mathscr{T}(E_0),\ {\rm supp}(\nu)\subseteq V_0$ and $\left(\nu,E_0,\hat{q}\right)\in \mathcal{I}_{\rm nf}.$ This implies that the RHS of \eqref{exprss1-1 alpha_c} is no more than \eqref{exprss1-2 alpha_c}.

On the other hand, for any $\left(\nu,E_0,\hat{q}\right)\in \mathcal{I}_{\rm nf},$ set $\mu(z)=\nu(z^-)\hat{q}(z^-,z^+),\ z\in V_S$. We verify that $\nu=T(\mu)$ and $\left(\mu,E_0,\hat{q}\right)\in \mathcal{I}_{\rm nf}^T$ as follows:
\begin{itemize}
	\item $\nu=T(\mu)$: For any $x\in V,$ summing $\mu(z)=\nu(z^-)\hat{q}(z^-,z^+)$ over all $z$ with $z^-=x$, by $\sum\limits_{y\in V}\hat{q}(x,y)=1$, we obtain $\nu(x)=T(\mu)(x)$.
	\item ${\rm supp}(\mu|_E)\subseteq E_0$: Since for any $z\in V_S$, $\mu(z)=\nu(z^-)\hat{q}(z^-,z^+)>0$ implies $z|_E\in E_0$, we show that ${\rm supp}(\mu|_E)\subseteq E_0$.
\end{itemize}
Thus the RHS of (\ref{exprss1-1 alpha_c}) is no less than \eqref{exprss1-2 alpha_c}. So far we have finished the proof.\end{proof}

  \begin{proof}[Proof of Theorem \ref{m-exp decay} (b)]  Our strategy is to demonstrate that there is a $p\in (0,1)$ such that
\begin{equation}\label{more alpha_c-2}
0<\liminf_{n\to\infty}\frac{\mathbb{P}_{x_0} (\mathcal{T}_{\mathscr{S}_0}>n)}{p^n}\le \limsup_{n\to\infty}\frac{\mathbb{P}_{x_0} (\mathcal{T}_{\mathscr{S}_0}>n)}{p^n}<\infty.
\end{equation}
This implies that
$
\lim_{n\to \infty}\frac{1}{n}\log\mathbb{P}_{x_0}\left(\mathcal{T}_{\mathscr{S}_0}>n\right)=-\log p.
$
By part (a) in Theorem  \ref{m-exp decay}, $p=e^{-\alpha_c(\delta)}$.

For any two nonnegative sequences $\{a_n\}_{n\ge 0}$ and $\{b_n\}_{n\ge0}$, call they have the same order if for some positive constants $h_1$ and $h_2$,
\begin{equation}
h_1b_n\leq a_n\leq h_2b_n,\ n\geq 0,\label{Theta()}
\end{equation}
and denote this by $a_n=\Theta(b_n)$. To prove \eqref{more alpha_c-2}, we need to show the following claim:
\vskip1mm

{  \emph
	{For $(\mathcal{Z}_n)_{n\ge0}$ with initial condition $\mathcal{Z}_0^-=x$ (choose $\mathcal{Z}_0^+$ uniformly among those vertices in $G$ adjacent to $x$), there exists a constant $p\in(0,1)$ such that
		\begin{equation}\label{exp-crt-clm}
		\mathbb{P}_{x_0}(\mathcal{T}_{\mathscr{S}_0}>n)=\Theta(p^n).
		\end{equation}}}

To this end, we have to employ the decomposition of $(\mathcal{Z}_n)_{n\ge 0}$ again.

Recall the renewal times ${\tau}_i$ defined in \eqref{def of renewal time}, choose some sequence $\{E_j\}_{1\leq j\leq d}\in\mathscr{E}$. Set $l=\min\{k\le \b: E_k\notin\mathscr{S}_0 \}$, $l$ depends on $\{E_j\}_j$. Note that under condition $\mathcal{Z}_{\tau_k}|_E\in E_k\ (1\le k\le i)$, the process $(\mathcal{Z}_n)_{n\ge \tau_i}$ can be regarded as a Markov process on $\{\overrightarrow{uv}: uv\in E_{i+1}\}$ when it stays in the subgraph $\{\overrightarrow{uv}: uv\in E_{i}\}$.   By the total probability formula,
we have
\begin{align}
\mathbb{P}_{x_0}\left(\mathcal{T}_{\mathscr{S}_0}>n\right)&=\sum_{\{E_j\}_j\in\mathscr{E}}\sum_{n_1+\dots+n_{l-1}>n}\prod_{i=1}^{l-1}\mathbb{P}_{x_0}\left({\tau}_{i}-{\tau}_{i-1}=n_i, \mathcal{Z}_{\tau_i}|_E\in E_i \Big|\mathcal{Z}_{\tau_k}|_E\in E_k,1\le k\le i-1\right).  \label{exp-crt-clm1}
\end{align}

Hence, a crucial aspect is to estimate the tail probability of $\tau_i-\tau_{i-1}$ under condition

\noindent $\{\mathcal{Z}_{\tau_k}|_E\in E_k,1\le k\le i-1\}$.  To address this,  we introduce a result on Markov chains with a finite state space as follows.
\begin{lemma}\label{order of Markov process}
	For an irreducible Markov chain $\{\xi_n\}_{n\ge0}$ on a finite state space $\widetilde{S}$, the probability it does not visit some subspace $A\neq \widetilde{S}$ until time $n$ has the same order as $p_0^n$ for some $p_0\in(0,1)$, i.e.,
	$
	\P(\tau> n)=\Theta(p_0^n),
	$
	where $\tau:=\inf\{n: \xi_n\in A \}$.
\end{lemma}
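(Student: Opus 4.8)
The plan is to prove Lemma~\ref{order of Markov process} by separately establishing a matching upper bound and lower bound for $\P(\tau>n)$.

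\medskip

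\emph{Upper bound.} First I would fix a vertex $a\in A$. Since $\{\xi_n\}_{n\ge0}$ is irreducible on the finite state space $\widetilde S$, there is an integer $m\ge 1$ and a constant $c>0$ such that from \emph{every} state $s\in\widetilde S$ one can reach $a$ within $m$ steps with probability at least $c$; concretely, one may take $m=|\widetilde S|$ and $c=\min_{s}\P_s(\xi_j=a\text{ for some }j\le m)>0$, the minimum being over a finite set of positive numbers. Then, by the Markov property applied at the times $m,2m,3m,\dots$, conditioning successively, one gets
\begin{equation*}
\P_s(\tau>km)\le (1-c)^k,\qquad k\ge 0,
\end{equation*}
uniformly in the initial state $s$. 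Writing $n=km+r$ with $0\le r<m$ and using monotonicity of $n\mapsto\P(\tau>n)$ yields $\P(\tau>n)\le (1-c)^{\lfloor n/m\rfloor}\le (1-c)^{-1}\big((1-c)^{1/m}\big)^n$. So with $p_0:=(1-c)^{1/m}\in(0,1)$ we obtain $\P(\tau>n)\le h_2\,p_0^n$ for a suitable constant $h_2$.

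\medskip

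\emph{Lower bound.} Here I would use that $A\neq\widetilde S$, so there is a state $b\in\widetilde S\setminus A$; fix a shortest path (under the chain's transitions) inside $\widetilde S\setminus A$ from $b$ back to $b$ of some length $\ell\ge 1$ with positive probability $c'>0$ of being followed — such a loop exists because, again by irreducibility restricted appropriately, $b$ is recurrent for the killed chain started at $b$ before any excursion into $A$ occurs with positive probability; more carefully, since the chain started at $b$ has positive probability $c'>0$ of returning to $b$ in $\ell$ steps without entering $A$ (take any cycle through $b$ avoiding $A$; one exists since the graph on $\widetilde S\setminus A$ containing $b$ has at least the self-transition or a returning walk available in a finite irreducible setting — if not, one argues via the first return decomposition). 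Concatenating $k$ such loops gives $\P_b(\tau>k\ell)\ge (c')^k$, and then for general initial distribution there is positive probability $c''>0$ of reaching $b$ in at most $m$ steps without hitting $A$ only if such a path exists; if it does not for the given start, one instead notes $\P(\tau>n)\ge \P(\tau>n,\ \xi_0=\text{start})$ is already handled by the chain's own positive escape probability. Combining, $\P(\tau>n)\ge h_1\,p_0'^n$ with $p_0':=(c')^{1/\ell}\in(0,1)$ and some $h_1>0$. To make the two exponential rates agree one invokes the general fact (which the paper is effectively proving in Theorem~\ref{m-exp decay}) that $\tfrac1n\log\P(\tau>n)$ converges, via Fekete's subadditive lemma applied to $-\log\P(\tau>n)$ — indeed $\P(\tau>n+m)\ge \P(\tau>n)\,\min_s\P_s(\tau>m)$ gives superadditivity of $\log\P(\tau>n)$ up to a constant once one checks $\min_s\P_s(\tau>m)>0$ for all $m$, which holds since from any $s\notin A$ there is a positive-probability length-$m$ walk avoiding $A$ (and if $s\in A$ the event $\tau>m$ is handled trivially or the start is conditioned away). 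The limit $p_0:=\lim_n \P(\tau>n)^{1/n}$ then lies in $(0,1)$, and sandwiching $\P(\tau>n)$ between $h_1p_0^n$ and $h_2p_0^n$ follows from submultiplicativity/supermultiplicativity estimates.

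\medskip

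The main obstacle I expect is the \emph{lower bound}, specifically producing a length-$\ell$ loop inside $\widetilde S\setminus A$ through a fixed state with positive probability: irreducibility of the full chain does not immediately guarantee that the chain restricted to $\widetilde S\setminus A$ (before absorption in $A$) can return to its starting point, so one must argue that the \emph{communicating class structure} of the killed chain contains a cycle — equivalently, that $\widetilde S\setminus A$ supports at least one closed walk of the chain. Since $A\neq\widetilde S$ and $\widetilde S$ is finite, starting from some $b\notin A$ the chain has positive probability of staying in $\widetilde S\setminus A$ for every finite time horizon only if some such cycle exists; ruling out the degenerate case where the killed chain is a.s. absorbed in bounded time requires noting that $\P_b(\tau>n)$ is nonincreasing and strictly positive for each $n$ (a positive-probability walk of length $n$ avoiding $A$ exists — take $b\to b_1\to\cdots$ following any trajectory, which must at some point re-enter a previously visited state by finiteness, creating the needed loop). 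Once this loop is in hand, the concatenation argument and the subadditivity/Fekete step are routine, so I would present the loop-existence as the one lemma deserving careful justification.
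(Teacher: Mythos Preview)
Your approach differs from the paper's: the paper makes $A$ absorbing, reduces (by restricting to the states in $A^c$ accessible from the start) to the case where the substochastic matrix $B=(p_{ij})_{i,j\in A^c}$ is irreducible, and then invokes the Perron--Frobenius theorem (citing \cite{DZ1998}) to obtain directly $\sum_j B^n(i,j)\phi_j=\Theta(p_0^n)$, with $p_0$ the spectral radius of $B$ and $\phi$ its positive eigenvector. This delivers the two-sided bound in one stroke.

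Your elementary route, by contrast, has a real gap at the final step. The inequality $\P(\tau>n+m)\ge\P(\tau>n)\,\min_s\P_s(\tau>m)$ is correct, but it is \emph{not} superadditivity of $\log\P(\tau>n)$, since the second factor is $\min_s\P_s(\tau>m)$ rather than $\P(\tau>m)$; your ``up to a constant'' hides exactly the comparison you have not established. Even if you repair this by working with the genuinely supermultiplicative $b_n:=\min_{s\in A^c}\P_s(\tau>n)$ and submultiplicative $B_n:=\max_{s\in A^c}\P_s(\tau>n)$, Fekete only yields the one-sided inequalities $b_n\le p_0^n$ and $B_n\ge q_0^n$, which point the wrong way for sandwiching $\P(\tau>n)$ and do not give $p_0=q_0$. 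What is missing is a comparison $b_{n+m}\ge\varepsilon B_n$ for fixed $m,\varepsilon>0$; that would force $B_n\le\varepsilon^{-1}p_0^{n+m}$, $b_n\ge\varepsilon q_0^{n-m}$, and hence $p_0=q_0$ with honest constants. But that comparison requires every state of $A^c$ to reach every other \emph{inside} $A^c$ in boundedly many steps with uniformly positive probability --- exactly the irreducibility of $B$ that the paper's reduction isolates. Without it, both your claim that $\min_s\P_s(\tau>m)>0$ for all $m$ and the sandwiching step can fail outright (take the deterministic cycle $1\to2\to3\to1$ with $A=\{3\}$: from state $2$ the chain hits $A$ in one step with certainty, so $\P_2(\tau>1)=0$). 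The sentence ``sandwiching\dots follows from submultiplicativity/supermultiplicativity estimates'' is precisely where the work lies, and as written it is not done.
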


We prove Lemma \ref{order of Markov process} in Appendix \ref{apdx-mrkproc}. Applying Lemma \ref{order of Markov process}, we obtain
\begin{equation}
\mathbb{P}_{x_0}\left({\tau}_{i}-{\tau}_{i-1}>n_i\Big|\mathcal{Z}_{\tau_k}|_E\in E_k,1\le k\le i-1\right)=\Theta\left(p_{i,\{E_j\}}^{n_i}\right)\label{order in subgraph},
\end{equation}
see Figure \ref{schedule line 3}. Note that
\begin{align}
&\mathbb{P}_{x_0} \left( \tau_i-\tau_{i-1}>n_i, \mathcal{Z}_{\tau_i}|_E\in E_i \left| \mathcal{Z}_{\tau_k}|_E\in E_k,1\le k\le i-1 \right. \right)\nonumber\\
&=\mathbb{P}_{x_0} \left( \tau_i-\tau_{i-1}>n_i \left| \mathcal{Z}_{\tau_k}|_E\in E_k,1\le k\le i-1 \right. \right) \times\nonumber\\
&\hskip 0.6cm \sum_{z:z|_E\in E_{i-1}}\Big[\mathbb{P}_{x_0} \left( \mathcal{Z}_{\tau_i}|_E\in E_i \left| \mathcal{Z}_{n_i}=z,  \tau_i-\tau_{i-1}>n_i, \mathcal{Z}_{\tau_k}|_E\in E_k,1\le k\le i-1 \right. \right)\times \nonumber\\
&\hskip 1.8cm \mathbb{P}_{x_0} \left(  \mathcal{Z}_{n_i}=z \left| \tau_i-\tau_{i-1}>n_i, \mathcal{Z}_{\tau_k}|_E\in E_k,1\le k\le i-1 \right. \right) \Big] . \label{eq-order-estimate}
\end{align}

\begin{figure}[htbp]
	\centering{\includegraphics{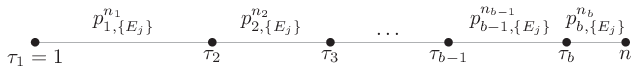}}
	\caption{\small{Under condition $\mathcal{Z}_{\tau_k}|_E\in E_k\ (1\le k\le i-1)$, the probability of the event $\{\tau_{i}-\tau_{i-1}>n_i\}$  has the same order as $p_{i,\{E_j\}}^{n_i}$.}}
	\label{schedule line 3}
\end{figure}

Set $(\tilde{\mathcal{Z}}_n)_{n\ge0}$ to be a Markov process with the state space $\{\overrightarrow{uv}: uv\in \tilde{E}_{i}\}$ and the transition probability $\pp_{E_{i-1}}$ (see definition in \eqref{p_E}), where $\tilde{E}_i$ is a union of $E_{i-1}$ and all edges adjacent to $E_{i-1}$. We observe
\begin{equation}
\tilde{\mathbb{P}}_z(\tilde{\mathcal{Z}}_{\tilde{\tau}_i}|_E\in E_i)=\mathbb{P}_{x_0} \left( \mathcal{Z}_{\tau_i}|_E\in E_i \left| \mathcal{Z}_{n_i}=z,  \tau_i-\tau_{i-1}>n_i, \mathcal{Z}_{\tau_k}|_E\in E_k,1\le k\le i-1 \right. \right), \label{eq-order-representation}
\end{equation}
where $\tilde{\mathbb{P}}_z$ is the probability derived by $(\tilde{\mathcal{Z}}_n)_{n\ge0}$ with starting point $z$, and $\tilde{\tau}_i:=\inf\{n:\tilde{\mathcal{Z}}_n|_E\notin E_{i-1}  \}$ is the stopping time such that $\tilde{\mathcal{Z}}_n$ runs out of $\{\overrightarrow{uv}: uv\in E_{i-1}\}$.
Actually, \eqref{eq-order-representation} is a constant in $(0,1)$, and the amount of $z$, i.e., $ |\{ z: z|_E\in E_{i-1}\}|$, is finite. Therefore, \eqref{eq-order-estimate} has the same order as \eqref{order in subgraph}.

Now, we are focusing on proving (\ref{exp-crt-clm}).

 Noting that \eqref{eq-order-estimate} has the same order as (\ref{order in subgraph}), we deduce
\begin{align*}
(\ref{exp-crt-clm1})&\ge\sum_{\{E_j\}_j\in\mathscr{E}}\max_{1\le i\le l-1}\left\{\mathbb{P}_{x_0}\left({\tau}_{i}-{\tau}_{i-1}>n\Big|\mathcal{Z}_{\tau_k}|_E\in E_k,1\le k\le \b\right)\right\}
=\Theta\left(\max_{1\le i\le l-1,\{E_j\}_j\in\mathscr{E}}\{p_{i,\{E_j\}}\}^n\right).
\end{align*}
And,
\begin{align*}
(\ref{exp-crt-clm1})
&\le\sum_{\{E_j\}_j\in\mathscr{E}}
 \sum_{n_1+\dots+n_{l-1}=n}\prod_{i=1}^{l-1}\mathbb{P}_{x_0}\left({\tau}_{i}-{\tau}_{i-1}>n_i, \mathcal{Z}_{\tau_i}\in E_i \Big|\mathcal{Z}_{\tau_k}|_E\in E_k,1\le k\le i-1\right)\\
&= \Theta\left(\sum_{\{E_j\}_j\in\mathscr{E}}\sum_{n_1+\dots+n_{l-1}=n}\prod_{i=1}^{l-1}p_{i,\{E_j\}}^{n_i}\right).
\end{align*}
For fixed $\{E_j\}_j\in\mathscr{E}$, let $k'=k'(\{E_j\})$ be an $i$ such that $p_{i,\{E_j\}}=\max_{1\le k\le l-1}\{p_{k,\{E_j\}}\}.$ Then
\begin{align}
&\sum_{\{E_j\}_j\in\mathscr{E}}\sum_{n_1+\dots+n_{l-1}=n}\prod_{i=1}^{l-1}p_{i,\{E_j\}}^{n_i}\nonumber\\
&=\sum_{\{E_j\}_j\in\mathscr{E}}(p_{k',\{E_j\}})^{n}\cdot\sum_{n_1+\dots+n_{k'-1}+n_{k'+1}+\dots+n_{l-1}\le n}\prod_{1\le i\not=k'\le l-1}\left(\frac{p_{i,\{E_j\}}}{p_{k',\{E_j\}}}\right)^{n_i}.\label{(4.9)}
\end{align}
On the one hand,
\begin{equation*}
(\ref{(4.9)})\ge\sum_{\{E_j\}_j\in\mathscr{E}}(p_{k',\{E_j\}})^n\cdot\prod_{1\le k\not=k'\le l-1}\sum_{r=0}^{\lfloor\frac{n}{l}\rfloor}\left(\frac{p_{k,\{E_j\}}}{p_{k',\{E_j\}}}\right)^r=\Theta\left(\max_{1\le i\le l-1,\{E_j\}_j\in\mathscr{E}}\{p_{i,\{E_j\}}\}^n\right),
\end{equation*}
where for any $a\in\mathbb{R},$ $\lfloor a\rfloor$ is the biggest integer no more than $a$. On the other hand,
\begin{align*}
(\ref{(4.9)})\le\prod_{1\le k\not=k'\le l-1}\sum_{r=0}^{n}\left(\frac{p_{k,\{E_j\}}}{p_{k',\{E_j\}}}\right)^r
=\Theta\left(\max_{1\le i\le l-1,\{E_j\}_j\in\mathscr{E}}\{p_{i,\{E_j\}}\}^n\right).
\end{align*}
Therefore, $(\ref{(4.9)})=\Theta\left(\max_{1\le i\le l-1,\{E_j\}_j\in\mathscr{E}}\{p_{i,\{E_j\}}\}^n\right)$, which implies \[\mathbb{P}_{x_0}\left(\mathcal{T}_{\mathscr{S}_0}>n\right)\le\Theta\left(\max_{1\le i\le l-1,\{E_j\}_j\in\mathscr{E}}\{p_{i,\{E_j\}}\}^n\right).\]

To sum up, choose
$p=\max_{1\le i\le l-1,\{E_j\}_j\in\mathscr{E}}\{p_{i,\{E_j\}}\}.$
Then $\mathbb{P}_{x_0}\left(\mathcal{T}_{\mathscr{S}_0}>n\right)=\Theta\left(p^n\right)$. \end{proof}

  \begin{proof}[Proof of Theorem \ref{m-exp decay} (c)]  Due to  $\mathbb{P}_{x_0}\left(\mathcal{L}^n\in\mathcal{C}(\mathscr{S}_0)\right)=\mathbb{P}_{x_0}\left(\mathcal{T}_{\mathscr{S}_0}>n\right)$, Part (a) in Theorem \ref{m-exp decay} shows that
	\begin{equation}
	\lim_{n\to\infty}\frac{1}{n}\log\mathbb{P}_{x_0}\left(\mathcal{T}_{\mathscr{S}_0}>n\right)=-\alpha_c(\delta).\label{exponential decay}
	\end{equation}
Note that
	\begin{align}
	\mathbb{E}_{x_0}\left[ e^{\alpha \mathcal{T}_{\mathscr{S}_0}}\right]=\sum_{n=0}^\infty  (e^{\alpha}-1)\cdot e^{\alpha n} \mathbb{P}_{x_0}\left(\mathcal{T}_{\mathscr{S}_0}>n\right).\label{equ in proof of exponential decay}
	\end{align}

\noindent	{\bf\ \emph{Case 1}}.	  If $\alpha<\alpha_c(\delta)$, then choose some $0<\varepsilon<\alpha_c(\delta)-\alpha$, by (\ref{exponential decay}) there exists some $N>0$ such that for all $n>N$, $\mathbb{P}_{x_0}\left(\mathcal{T}_{\mathscr{S}_0}>n\right)<e^{-n(\alpha_c(\delta)-\varepsilon)}$, which implies
	$
	e^{\alpha n} \mathbb{P}_{x_0}\left(\mathcal{T}_{\mathscr{S}_0}>n\right)<e^{n(\alpha+\varepsilon-\alpha_c(\delta))}.
	$
Since $\alpha+\varepsilon-\alpha_c(\delta)<0$, by (\ref{equ in proof of exponential decay}), we have that $\mathbb{E}_{x_0}\left[ e^{\alpha \mathcal{T}_{\mathscr{S}_0}}\right]<\infty$.

 \noindent	{\bf\ \emph{Case 2}}.	  If $\alpha>\alpha_c(\delta)$, then choose some $0<\varepsilon<\alpha-\alpha_c(\delta)$, and similarly we can find some $N>0$ such that for all $n>N$,
	$
	e^{\alpha n} \mathbb{P}_{x_0}\left(\mathcal{T}_{\mathscr{S}_0}>n\right)>e^{n(\alpha-\varepsilon-\alpha_c(\delta))}.
	$
Due to  $\alpha-\varepsilon-\alpha_c(\delta)>0$, by (\ref{equ in proof of exponential decay}), we see that $\mathbb{E}_{x_0}\left[ e^{\alpha \mathcal{T}_{\mathscr{S}_0}}\right]=\infty$.

 \noindent	{\bf\ \emph{Case 3}}.	 If $\alpha=\alpha_c(\delta)$,  Part (c)  in Theorem \ref{m-exp decay} shows that there is some constant $c>0$ and sufficiently large $M>0$ such that for all $n>M$,
 \begin{equation}\label{more alpha_c-1}
     c<e^{n\alpha_c(\delta)}\mathbb{P}_{x_0}\left(\mathcal{T}_{\mathscr{S}_0}>n\right).
 \end{equation}
  Thus, taking  $\alpha=\alpha_c(\delta)$ in (\ref{equ in proof of exponential decay}), by (\ref{more alpha_c-1}), we obtain
 $\mathbb{E}_{x_0}\left[e^{\alpha_c(\delta)\mathcal{T}_{\mathscr{S}_0}}\right]=\infty$.
 \end{proof}

\section{Analytic properties and  asymptotic behaviors of critical exponent}\label{sec 4.2}
\noindent     In this section, we mainly prove Theorem \ref{thm_prop_alpha_c}. At first, we show how to prove  Theorem   \ref{Mthm C_Edelta} by Theorem \ref{thm_prop_alpha_c}, which presents some  analytic properties and asymptotic behaviors of critical exponent for the exponential integrability of cover time $C_E$. 

{

\begin{proof}[Proof of Theorem \ref{Mthm C_Edelta}]
We may see that for any edge $e$ in star-shaped graphs or triangle, $E_e=E$. By Theorem \ref{thm_prop_alpha_c} (c), we deduce $\lim\limits_{\delta\to 0}\alpha_c^1(\delta)=\infty$. Next we prove that if $\lim\limits_{\delta\to 0}\alpha_c^1(\delta)=\infty$, then the graph should be star-shaped graphs or triangle. By Theorem \ref{thm_prop_alpha_c} (c), this can be deduced by that if $E_e=E$ for any edge $e$ adjacent to the starting point of ORRWs, then the graph should be star-shaped graphs or triangle.

Consider some edge $e$ adjacent to the starting point. Then $E_e=E$. We assume $e=uv$ for some vertices $u,v$.

\noindent \emph{\textbf{Case 1.}}
\emph{If for $u$ or $v$, there is only one edge adjacent to this vertex. Then the graph should be a star-shaped graph.}

\noindent \emph{\textbf{Case 2.}}
\emph{If for both $u$ and $v$, there are two or more edges adjacent. Assume $u_0\sim u$, $v_0\sim v$, where $u_0\neq v$ and $v_0\neq u$. Further, we assume the starting point to be $u$. Consider edge $e'=uu_0$. Since $E_{e'}=E$, $\{v_0,v\}\cap\{u_0,u\}\neq\emptyset$. By $v\neq u,u_0$ and $v_0\neq u$, we obtain that $u_0=v_0$.}

\emph{Assume that there is some other vertex $w\notin\{u,v,u_0,v_0\}$. Noting that $E_e=E$ and that the graph is connected, $w$ is adjacent to $u$ or $v$. If $u\sim w$, by $w\neq u_0,v$, $u\neq u_0,v$, we deduce that $u_0v\notin E_{uw}$, which contradicts to $E_{uw}=E$. Another case is $v\sim w$. By $w\neq u,v_0$, $v\neq u,v_0$, we have that $wv\notin E_{uv_0}$, which contradicts to $E_{uv_0}=E$.}

\emph{Therefore, there is only three vertices in the graph, where each vertex is adjacent to other vertices. This implies that the graph should be a triangle.}

To sum up, we show that the sufficient and necessary condition of $\lim\limits_{\delta\to0}\alpha_c^1(\delta)=\infty$ is that the graph is star-shaped graphs or triangle. \end{proof}
}

\begin{proposition}\label{another expression of alpha_c 1}
	\begin{itemize}
			\item[(a)] 	
	\begin{align}
	\inf_{\mu\in\mathcal{C}(\mathscr{S}_0)}\Lambda_{\delta,\mathscr{S}_0}(\mu)=\inf_{\stackrel{(\mu,E_0):E_0\in\mathscr{S}_0,}{\mu\in\mathscr{P}(V_S) , {\rm supp}(\mu|_E)\subseteq E_0}}  \Bigg\{\Lambda_{1}(\mu)+\int_{\{\overrightarrow{uv}: uv\in \partial E_{0}\}}\log{\frac{d(z)-k(z)+k(z)\delta}{d(z)\delta}}\ {\rm d}\mu\Bigg\},
	\label{exprs2  alpha_c}
	\end{align}
	where $d(z),k(z)$ are the out-degrees of $z$ in the graphs $S$ and $\{\overrightarrow{uv}: uv\in E_{0}\}$ respectively.
	\item[(b)] The infimum in \eqref{exprs2  alpha_c} is attained at some $\mu\in\mathscr{P}(V_S)$ with $\mu|_E\left(\partial E_0\right)>0$.
\end{itemize}
\end{proposition}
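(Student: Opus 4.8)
The plan is to start from the simplified variational formula in Proposition \ref{another expression of alpha_c}, namely
\[
\inf_{\mu\in\mathcal{C}(\mathscr{S}_0)}\Lambda_{\delta,\mathscr{S}_0}(\mu)=\inf_{\stackrel{\left(\mu,E_0,\qq\right):E_0\in\mathscr{S}_0,\qq\in\mathscr{T}_S,}{\mu\in\mathscr{P}(V_S),\ \text{\rm supp}(\mu|_E)\subseteq E_0,\ \mu\qq=\mu}}\int_{V_S} R(\qq\|\pp_{E_0})\ {\rm d}\mu,
\]
and to massage the inner relative entropy by comparing the reference kernel $\pp_{E_0}$ with $\pp_{E}=\pp_1$ (the lifted SRW kernel). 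The key algebraic observation is that for a vertex $z\in V_S$, the kernels $\pp_{E_0}(z,\cdot)$ and $\pp_E(z,\cdot)$ differ only through the normalizing constant: $\pp_E(z,z')=1/d(z)$ for every $z'\leftarrow z$, whereas $\pp_{E_0}(z,z')$ equals $1/k(z)$ if $z'|_E\in E_0$ and $\delta/(d(z)-k(z)+k(z)\delta)$ if $z'|_E\notin E_0$ — but since $\mathrm{supp}(\mu|_E)\subseteq E_0$, the walk under an invariant $\mu$ with $\mu\qq=\mu$ only ever uses outgoing edges $z\to z'$ with $z'|_E\in E_0$ when $z|_E\in E_0\setminus\partial E_0$, and can leave only from $z$ with $z|_E\in\partial E_0$. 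Writing $R(\qq\|\pp_{E_0})=R(\qq\|\pp_E)+\int\log\frac{\pp_E}{\pp_{E_0}}\,d\qq$ pointwise and integrating against $\mu$, the cross term collapses: for $z$ with $z|_E\in E_0\setminus\partial E_0$ one has $k(z)=d(z)$ and $\log(\pp_E/\pp_{E_0})\equiv 0$, while for $z$ with $z|_E\in\partial E_0$ the ratio on the edges actually charged by $\qq$ (those staying in $E_0$) is the constant $k(z)/d(z)$, so $\int_{V_S}\big(\int\log\frac{\pp_E}{\pp_{E_0}}\,d\qq\big)d\mu=\int_{\{\overrightarrow{uv}:uv\in\partial E_0\}}\log\frac{k(z)}{d(z)}\,d\mu$... wait — one must be careful: $\qq$ is allowed to keep the walk inside $E_0$, so the relevant ratio is $\pp_E(z,z')/\pp_{E_0}(z,z')=(1/d(z))/(1/k(z))=k(z)/d(z)$ — but the claimed integrand is $\log\frac{d(z)-k(z)+k(z)\delta}{d(z)\delta}$, which is the ratio for the \emph{other} branch. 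I would therefore reconcile this by using a second optimization step: by Lemma \ref{inf attainment} the inner infimum over $\qq$ with $\mu\qq=\mu$ reduces to an infimum over $\hat q\in\mathscr{T}_G$, and one can absorb the freedom in $\hat q$ (equivalently, re-parametrize which outgoing branch carries the $\delta$-weight) so that the constant appearing is exactly $\log\frac{d(z)-k(z)+k(z)\delta}{d(z)\delta}$; then what remains, $\inf_{\qq:\mu\qq=\mu}\int R(\qq\|\pp_E)\,d\mu$ optimized jointly over the decomposition, is by definition (Definition \ref{Def Lambda} with $\delta=1$, $\mathscr{S}_0=\mathscr{S}$, together with Proposition \ref{another expression of alpha_c} applied at $\delta=1$) exactly $\Lambda_1(\mu)$. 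This yields (a).

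For part (b), I would argue by contradiction: suppose the infimum in \eqref{exprs2 alpha_c} is attained only at measures $\mu$ with $\mu|_E(\partial E_0)=0$, i.e.\ $\mathrm{supp}(\mu|_E)\subseteq E_0\setminus\partial E_0$. Then the boundary integral vanishes and the value equals $\Lambda_1(\mu)$ for such $\mu$. But $\Lambda_1$ is the rate function for the empirical measure of the \emph{SRW} on the lifted graph, which is irreducible on all of $V_S$; any $\qq$-invariant $\mu$ supported strictly inside $E_0\setminus\partial E_0$ with $\mu\qq=\mu$ and $\qq$ nearest-neighbour would force $\qq$ itself to be supported on a proper subgraph that is not closed under the SRW (edges of $\partial E_0$ lead out), so $R(\qq\|\pp_E)>0$ on a set of positive $\mu$-measure, and in fact $\Lambda_1(\mu)>0$ for every such $\mu$ — while allowing a small mass $\varepsilon>0$ on $\partial E_0$ and routing it appropriately strictly decreases $\Lambda_1$ toward the genuinely stationary (uniform-type) measure. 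A cleaner route: take any $\mu^\*$ attaining the infimum; if $\mu^\*|_E(\partial E_0)=0$, perturb $\mu^\*$ to $\mu_\varepsilon=(1-\varepsilon)\mu^\*+\varepsilon\sigma$ where $\sigma$ is a probability measure charging $\partial E_0$ and chosen compatible with a genuine invariant distribution of an irreducible nearest-neighbour chain on $\{\overrightarrow{uv}:uv\in E_0\}$; one shows the map $\varepsilon\mapsto \Lambda_1(\mu_\varepsilon)+\int_{\partial E_0}(\cdots)d\mu_\varepsilon$ has right-derivative at $0$ equal to $-\Lambda_1(\mu^\*)+$ (bounded boundary term)$\cdot O(1)$, and since $\Lambda_1(\mu^\*)>0$ strictly (again because $\mu^\*$ cannot be invariant for an irreducible chain while avoiding $\partial E_0$), this derivative is negative for the right choice, contradicting minimality. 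Hence the minimizer must satisfy $\mu|_E(\partial E_0)>0$.

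The main obstacle is the bookkeeping in part (a): correctly tracking, for each boundary vertex $z$ with $z|_E\in\partial E_0$, how the relative entropy $R(\qq\|\pp_{E_0})$ splits into an ``$\pp_E$-part'' plus a clean multiplicative correction, given that $\qq$ may distribute mass both on in-$E_0$ edges and (for the comparison kernel $\pp_{E_0}$) the normalization implicitly references out-of-$E_0$ edges through $d(z)-k(z)+k(z)\delta$. The subtlety is that the correction term is a function of $z$ only (not of $\qq$ or $z'$), which is what makes it pull out of the entropy as a linear functional $\int_{\{\overrightarrow{uv}:uv\in\partial E_0\}}\log\frac{d(z)-k(z)+k(z)\delta}{d(z)\delta}\,d\mu$; establishing this cleanly requires writing $\pp_{E_0}(z,z')=\pp_E(z,z')\cdot c(z)\cdot(\text{branch factor})$ and checking that, after the $\qq$-infimum is taken (so that $\qq$ charges only in-$E_0$ branches at interior points), the branch factors recombine into the stated constant. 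Once that identity is in hand, recognizing the leftover as $\Lambda_1(\mu)$ is immediate from Definition \ref{Def Lambda} and Proposition \ref{another expression of alpha_c} specialized to $\delta=1$, $\mathscr{S}_0=\mathscr{S}$.
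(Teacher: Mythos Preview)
Your overall strategy for (a) matches the paper's: start from Proposition \ref{another expression of alpha_c}, split $R(\qq\|\pp_{E_0})=R(\qq\|\pp)+\int\log(\pp/\pp_{E_0})\,d\qq$, observe that the correction is a function of $z$ alone once $\qq$ is $\mu$-invariant with $\mathrm{supp}(\mu|_E)\subseteq E_0$, and identify the remaining piece as $\Lambda_1(\mu)$. However, you misread the definition of $\pp_{E_0}$. By Definition \ref{Def for p_u}, edges with $z_2|_E\in E_0$ carry weight $\delta$ (they are the ``already traversed'' edges) and edges with $z_2|_E\notin E_0$ carry weight $1$, so for $z'$ with $z'|_E\in E_0$ one has $\pp_{E_0}(z,z')=\delta/(d(z)-k(z)+k(z)\delta)$, not $1/k(z)$. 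With the correct formula, the ratio on the in-$E_0$ branches is exactly
\[
\frac{\pp(z,z')}{\pp_{E_0}(z,z')}=\frac{d(z)-k(z)+k(z)\delta}{d(z)\delta},
\]
which is precisely the claimed integrand; there is nothing to ``reconcile'' and no need to invoke Lemma \ref{inf attainment} or re-parametrize. The paper's proof is a two-line computation once this is straight; your detour through $\hat q$ is both unnecessary and does not lead anywhere concrete.

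For (b) your argument has a genuine gap. The paper does \emph{not} use a convex combination $(1-\varepsilon)\mu^*+\varepsilon\sigma$; instead it perturbs the optimal \emph{transition kernel} $\qq'$ at a single boundary state $z_0$ by diverting mass $\varepsilon$ onto a new edge, builds the corresponding invariant measure $\mu^\varepsilon$, and shows that $\frac{d}{d\varepsilon}\int R(\qq^\varepsilon\|\pp)\,d\mu^\varepsilon\to -\infty$ as $\varepsilon\downarrow 0$ because of the $\varepsilon\log\varepsilon$ contribution from the newly opened edge. This derivative blow-up is the essential mechanism: the boundary integral contributes only a bounded derivative, so the total is eventually negative. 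Your proposed derivative ``$-\Lambda_1(\mu^*)+O(1)$'' is not what one gets from a convex combination (convexity of $\Lambda_1$ only gives a derivative bounded by $\Lambda_1(\sigma)-\Lambda_1(\mu^*)$, which need not be negative), and in any case it misses the $\varepsilon\log\varepsilon$ phenomenon that actually drives the strict decrease. Without this, the contradiction does not close.
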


\begin{proposition}\label{property of alpha_c 2}
	For $\mathscr{S}_0\neq \mathscr{S}$, $\alpha_c(\delta)$ is strictly decreasing in $\delta>0$. If there exist some $E_0\in \mathscr{S}_0$ and some nonempty set $E'\subseteq E_0$ such that $E'$ is the support of the first marginal measure of some invariant measure on $V_S$ and $E'\cap \partial E_0=\emptyset$, then
	$
	\lim_{\delta\to0}\alpha_c(\delta)=\inf_{(\mu,E_0):\,E_0\in\mathscr{S}_0,\text{\rm supp}(\mu|_E)\subseteq E_0\setminus \partial E_0}\Lambda_1(\mu)<\infty.
	$
	Otherwise, $\lim\limits_{\delta\to 0}\alpha_c(\delta)=\infty$.
\end{proposition}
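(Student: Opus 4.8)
The plan is to work entirely from Proposition~\ref{another expression of alpha_c 1}(a), which lets us write, for each $\delta>0$, $\alpha_c(\delta)=\inf_{(\mu,E_0)}F_\delta(\mu,E_0)$ with $F_\delta(\mu,E_0):=\Lambda_1(\mu)+\int_{\{\overrightarrow{uv}:\,uv\in\partial E_0\}}\psi_\delta(z)\,{\rm d}\mu$, $\psi_\delta(z):=\log\frac{d(z)-k(z)+k(z)\delta}{d(z)\delta}$, the infimum running over $E_0\in\mathscr{S}_0$ and $\mu\in\mathscr{P}(V_S)$ with ${\rm supp}(\mu|_E)\subseteq E_0$ (only $\mu$ with $\Lambda_1(\mu)<\infty$ matter). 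A one-line computation gives $\partial_\delta\psi_\delta(z)=-\frac{d(z)-k(z)}{\delta(d(z)-k(z)+k(z)\delta)}$, which is strictly negative when $d(z)>k(z)$ and vanishes -- as does $\psi_\delta(z)$ itself -- when $d(z)=k(z)$; moreover, for $z$ with $z|_E\in\partial E_0$ and $d(z)>k(z)$ one has $\psi_\delta(z)\ge\log\frac1{d(z)\delta}\ge\log\frac1{\Delta\delta}\to\infty$ as $\delta\to0$, where $\Delta:=\max_v\deg_G(v)$. Put $B(E_0):=\{z:\,z|_E\in\partial E_0,\ d(z)>k(z)\}$. Since $\mathscr{S}_0$ is decreasing and $\mathscr{S}_0\neq\mathscr{S}$ we have $E\notin\mathscr{S}_0$, so every $E_0\in\mathscr{S}_0$ is a proper connected subgraph; hence $\partial E_0\neq\emptyset$ and, since one orientation of a boundary edge always points out of $E_0$, also $B(E_0)\neq\emptyset$. (When $\mathscr{S}_0=\mathscr{S}$ one has $\alpha_c\equiv0$ and everything is trivial, so we assume $\mathscr{S}_0\neq\mathscr{S}$.)

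The first real step is a structural lemma: \emph{if $\mu$ is stationary for some $q\in\mathscr{T}_S$, ${\rm supp}(\mu|_E)\subseteq E_0$, and $\mu(B(E_0))=0$, then ${\rm supp}(\mu|_E)\cap\partial E_0=\emptyset$.} On the finite set $V_S$, ${\rm supp}(\mu)$ is a union of recurrent classes of $q$, each strongly connected and -- as $S$ has no loops -- with at least two states. If some $uv\in{\rm supp}(\mu|_E)\cap\partial E_0$, pick an orientation $\overrightarrow{ab}$ of $uv$ charged by $\mu$, lying in a recurrent class $C$, and an edge $e'\notin E_0$ sharing a vertex with $uv$. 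If $e'$ meets the head $b$, then $\overrightarrow{ab}$ has an out-neighbour outside $\{\overrightarrow{st}:\,st\in E_0\}$, so $\overrightarrow{ab}\in B(E_0)$ is charged, a contradiction. If $e'$ meets the tail $a$, take a predecessor $\overrightarrow{sa}\in C$ of $\overrightarrow{ab}$; then $sa\in{\rm supp}(\mu|_E)\subseteq E_0$ is adjacent to $e'$, hence $sa\in\partial E_0$, while $\overrightarrow{sa}$ has $e'$ (oriented out of $a$) as an out-neighbour leaving $E_0$, so $\overrightarrow{sa}\in B(E_0)$ is charged, again a contradiction. Pushing this through the decomposition $\mu=\sum_k r_k\mu_k$ of Definition~\ref{Def Lambda} (each $\mu_k$ with $r_k>0$ being stationary with ${\rm supp}(\mu_k|_E)\subseteq{\rm supp}(\mu|_E)$) upgrades it to: for any $\mu$ with $\Lambda_1(\mu)<\infty$ and ${\rm supp}(\mu|_E)\subseteq E_0$, one has $\mu(B(E_0))=0$ if and only if ${\rm supp}(\mu|_E)\subseteq E_0\setminus\partial E_0$. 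Splitting the infimum for $\alpha_c(\delta)$ into \emph{good} pairs ($\mu(B(E_0))=0$, on which $F_\delta(\mu,E_0)=\Lambda_1(\mu)$ is independent of $\delta$) and \emph{bad} pairs ($\mu(B(E_0))>0$) then gives $\alpha_c(\delta)=\min\{A^\ast,\ \inf_{\rm bad}F_\delta\}$, where $A^\ast:=\inf\{\Lambda_1(\mu):E_0\in\mathscr{S}_0,\ {\rm supp}(\mu|_E)\subseteq E_0\setminus\partial E_0\}$ is exactly the right-hand side of the claim; by the upgraded lemma, $A^\ast<\infty$ precisely when the good family is nonempty, which unwinds to the stated hypothesis that some $E_0\in\mathscr{S}_0$ and nonempty $E'\subseteq E_0$ with $E'\cap\partial E_0=\emptyset$ occur as the (edge-marginal) support of an invariant measure on $V_S$; otherwise $A^\ast=+\infty$.

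For the strict monotonicity, Proposition~\ref{another expression of alpha_c 1}(b) furnishes a minimizing pair $(\mu^\ast,E_0^\ast)$ for $\alpha_c(\delta)$ with $\mu^\ast|_E(\partial E_0^\ast)>0$, which by the lemma forces $\mu^\ast(B(E_0^\ast))>0$; hence $\delta\mapsto F_\delta(\mu^\ast,E_0^\ast)=\Lambda_1(\mu^\ast)+\sum_{z\in B(E_0^\ast)}\mu^\ast(z)\psi_\delta(z)$ has strictly negative derivative and is strictly decreasing, so for $\delta_1<\delta_2$, $\alpha_c(\delta_1)=F_{\delta_1}(\mu^\ast,E_0^\ast)>F_{\delta_2}(\mu^\ast,E_0^\ast)\ge\alpha_c(\delta_2)$. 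For the limit $\delta\to0$: good pairs give $\alpha_c(\delta)\le A^\ast$ for all $\delta$, hence $\limsup_{\delta\to0}\alpha_c(\delta)\le A^\ast$. For the matching lower bound, for $\delta<1/\Delta$ every bad pair satisfies $F_\delta(\mu,E_0)\ge\Lambda_1(\mu)+\mu(B(E_0))\log\frac1{\Delta\delta}$, so $\inf_{\rm bad}F_\delta\ge g(M_\delta)$, where $M_\delta:=\log\frac1{\Delta\delta}\to\infty$ and $g(M):=\inf_{(\mu,E_0)}\{\Lambda_1(\mu)+\mu(B(E_0))M\}$ (over all admissible pairs, so $g(M)\le A^\ast$). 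A compactness argument -- $\mathscr{P}(V_S)$ is compact, $\mu\mapsto\mu(B(E_0))$ is continuous, there are finitely many $E_0$, and $\Lambda_1$ is lower semicontinuous -- shows that near-minimizers of $g(M_\delta)$ have $\mu(B(E_0))\to0$, hence subconverge to a good pair, whence $g(M_\delta)\to A^\ast$ as $\delta\to0$ (with $g(M_\delta)\to\infty$ when $A^\ast=\infty$, by the same argument run as a contradiction). Together with $\alpha_c(\delta)=\min\{A^\ast,\inf_{\rm bad}F_\delta\}$ this gives $\lim_{\delta\to0}\alpha_c(\delta)=A^\ast$ in all cases, which is the asserted dichotomy.

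The hard part will be the structural lemma and, through it, the identification of $A^\ast$ with the stated infimum: the subtle issue is that a directed edge $z$ with $z|_E\in\partial E_0$ but $d(z)=k(z)$ contributes nothing to $\int\psi_\delta\,{\rm d}\mu$ yet is not literally excluded by ``${\rm supp}(\mu|_E)\subseteq E_0\setminus\partial E_0$'', so one must show that no admissible stationary measure can charge such a $z$ while avoiding $B(E_0)$ -- this is where the recurrent-class structure on the lifted graph $S$ is essential. The Gamma-convergence-type estimate $\liminf_{\delta\to0}\inf_{\rm bad}F_\delta\ge A^\ast$ is the other point requiring care, since individual bad pairs may have $\mu(B(E_0))$ arbitrarily small, so no pair-by-pair bound will do.
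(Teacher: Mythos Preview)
Your argument is correct and rests on the same two pillars as the paper's proof: the structural observation that a stationary $\mu$ with ${\rm supp}(\mu|_E)\subseteq E_0$ that avoids the ``active'' boundary $B(E_0)=\partial(\{\overrightarrow{uv}:uv\in E_0\})$ must in fact avoid all of $\{\overrightarrow{uv}:uv\in\partial E_0\}$ (the paper states this as property \eqref{prop-partial-2}, with the inclusion \eqref{prop-partial-1} alongside), and the use of Proposition~\ref{another expression of alpha_c 1}(b) to force the minimizer to charge the boundary, giving strict monotonicity. Your proof of the structural lemma via recurrent classes is essentially the paper's argument for \eqref{prop-partial-2}, recast.

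Where your route differs is in organization. The paper splits into two cases and, in the ``otherwise'' case, goes back to the original representation $\alpha_c(\delta)=\inf_{\mu\in\mathcal{C}(\mathscr{S}_0)}\Lambda_{\delta,\mathscr{S}_0}(\mu)$ with the full decomposition $(\mu_k,r_k,E_k)_k$, bounding $\Lambda_1(\mu)-\Lambda_{\delta,\mathscr{S}_0}(\mu)$ and then running a compactness/lower-semicontinuity argument on $\Lambda_{\delta_n,\mathscr{S}_0}$. You instead stay entirely within the simplified representation of Proposition~\ref{another expression of alpha_c 1}(a), write $\alpha_c(\delta)=\min\{A^\ast,\inf_{\rm bad}F_\delta\}$, and handle both cases at once by the single $\Gamma$-convergence-type argument on $g(M)$. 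This is cleaner: the compactness/l.s.c.\ step is the same in spirit, but you avoid touching the $(\mu_k,r_k,E_k)$ machinery, which is the point of Proposition~\ref{another expression of alpha_c 1}(a) in the first place. One small remark: your aside about ``pushing this through the decomposition $\mu=\sum_k r_k\mu_k$'' is unnecessary, since $\Lambda_1(\mu)<\infty$ already gives $\mu q=\mu$ for some $q\in\mathscr{T}_S$ (by the identification $\Lambda_1(\mu)=\inf_{q:\mu q=\mu}\int R(q\|\mathbf p)\,{\rm d}\mu$), so the structural lemma applies directly to $\mu$.
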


 We will prove Proposition \ref{another expression of alpha_c 1} in Appendix \ref{sec-pf-prop-4.1} and Proposition \ref{property of alpha_c 2} in Appendix \ref{sec-pf-prop-4.2}. According to these two propositions, we finish the proof of Theorem \ref{thm_prop_alpha_c} as follows.

	\begin{proof}[Proof of Theorem \ref{thm_prop_alpha_c} (a)]

\noindent {\bf \emph{Monotonicity}}. For all $0<\delta_1<\delta_2$,
\begin{align*}	
R\left(\qq(x,\cdot)\|\pp_{E_0}(x,\cdot;\delta_2)\right)-R\left(\qq(x,\cdot)\|\pp_{E_0}(x,\cdot;\delta_1)\right)=\int_{V_S} \log{\frac{\pp_{E_0}(x,y;\delta_1)}{\pp_{E_0}(x,y;\delta_2)}}\ \qq(x,{\rm d}y).	
\end{align*}
Since $\mathcal{C}(\mathscr{S}_0)$ is closed, we may assume that the infimum in the expression of $\alpha_c(\delta_1)$ in \eqref{exprs1 alpha_c} is attained at $\mu_1\in \mathcal{C}(\mathscr{S}_0)$, $E_0\in\mathscr{S}_0$, $\qq_1\in\mathscr{T}_S$, where $\mu_1\qq_1=\mu_1$, and $\text{supp}(\mu|_E)\subseteq E_0$.
Noting $\mu_1(\partial (\{\overrightarrow{uv}: uv\in E_{0}\}))>0$ by Proposition \ref{another expression of alpha_c 1} (b), and that for $x\in\partial ( E_0\times\{1,1\})$ and $y\in \{\overrightarrow{uv}: uv\in E_{0}\}$ with $x\to y$,
\[
\frac{\pp_{E_0}(x,y;\delta_1)}{\pp_{E_0}(x,y;\delta_2)}\le\frac{\delta_1}{\delta_2}\cdot\frac{(\b-1)\delta_2+1}{(\b-1)\delta_1+1}<1.
\]
By Proposition \ref{another expression of alpha_c}, we obtain that
\begin{align}
&\alpha_c(\delta_2)-\alpha_c(\delta_1)\nonumber\\
&\le\int_{V_S}\left[R\left(\qq_1(x,\cdot)\|\pp_{E_0}(x,\cdot;\delta_2)\right)-R\left(\qq_1(x,\cdot)\|\pp_{E_0}(x,\cdot;\delta_1)\right)\right]
\ {\rm d}\mu_1\nonumber\\
&=\int_{V_S}\left[\int_{V_S} \log{\frac{\pp_{E_0}(x,y;\delta_1)}{\pp_{E_0}(x,y;\delta_2)}}\qq_1(x,{\rm d}y)\right]{\rm d}\mu_1\nonumber\\
&\le\log{\left(\frac{\delta_1}{\delta_2}\cdot\frac{(\b-1)\delta_2+1}{(\b-1)\delta_1+1}\right)}\mu_1(\partial (\{\overrightarrow{uv}: uv\in E_{0}\}))<0.
\label{proof of monotonicity of alpha_c}
\end{align}
Namely $\alpha_c(\delta_2)<\alpha_c(\delta_1).$

\vskip 2mm

\noindent {\bf \ \emph{Continuity}}.
When considering continuity, we dominate $\sup\limits_{\mu:\,\Lambda_{1,\mathscr{S}_0}(\mu)<\infty}|\Lambda_{\delta_1,\mathscr{S}_0}(\mu)-\Lambda_{\delta_2,\mathscr{S}_0}(\mu)|$ with some continuous function of $\delta_1,\delta_2$.

Fix some $\mu\in \mathscr{P}_\Lambda(S):=\{\mu\in\mathscr{P}(V_S):\ \Lambda_{1,\mathscr{S}_0}(\mu)<\infty\}$. For $\delta_1<\delta_2$, and  $x\notin\partial (\{\overrightarrow{uv}: uv\in E_{l}\})$,
\[
R(\qq_{l}(x,\cdot)\|\pp_{E_l}(x,\cdot;\delta_2))-R(\qq_{l}(x,\cdot)\|\pp_{E_l}(x,\cdot;\delta_1))=\int_{V_S} \log{\frac{\pp_{E_l}(x,y;\delta_1)}{\pp_{E_l}(x,y;\delta_2)}}\ \qq_{l}(x,{\rm d}y).
\]
Otherwise, we can obtain
$
\frac{\pp_{E_l}(x,y;\delta_1)}{\pp_{E_l}(x,y;\delta_2)}=\frac{k(x)\delta_2+d(x)-k(x)}{\delta_2}\cdot\frac{\delta_1}{k(x)\delta_1+d(x)-k(x)}\ge {\frac{\delta_1}{\delta_2}}.
$
This implies
\[
R(\qq_{l}(x,\cdot)\|\pp_{E_l}(x,\cdot;\delta_2))-R(\qq_{l}(x,\cdot)\|\pp_{E_l}(x,\cdot;\delta_1))\ge \log{\frac{\delta_1}{\delta_2}}.
\]
Choose some $\mu_{k},r_k,E_k,\qq_{k}$ such that
$
\Lambda_{\delta_2,\mathscr{S}_0}(\mu)=\sum_{k=1}^{\b}r_k\int_{V_S}R(\qq_{k}(x,\cdot)\|\pp_{E_k}(x,\cdot;\delta_2))\ {\rm d}\mu_{k}.
$
By the definition of $\Lambda_{\delta_1,\mathscr{S}_0}(\mu)$ and the monotonicity of $\Lambda_\delta$,
\begin{align*}
	0&\ge \Lambda_{\delta_2,\mathscr{S}_0}(\mu)-\Lambda_{\delta_1,\mathscr{S}_0}(\mu)\\
	&\ge \sum_{k=1}^{\b}r_k\int_{S}\left[R(\qq_{k}(x,\cdot)\|\pp_{E_k}(x,\cdot;\delta_2))-R(\qq_{k}(x,\cdot)\|\pp_{E_k}(x,\cdot;\delta_1))\right]\ {\rm d}\mu_{k}
	\ge\log{\frac{\delta_1}{\delta_2}}.
\end{align*}
Since $\mu$ is arbitrary, we demonstrate
$
0\le\sup_{\mu\in\mathscr{P}_\Lambda(S)}|\Lambda_{\delta_1,\mathscr{S}_0}(\mu)-\Lambda_{\delta_2,\mathscr{S}_0}(\mu)|\le\log{\frac{\delta_2}{\delta_1}}.
$
It immediately shows that
$
0\le|\alpha_c(\delta_2)-\alpha_c(\delta_1)|\le \Lambda_{\delta_1,\mathscr{S}_0}(\tilde{\mu})-\Lambda_{\delta_2,\mathscr{S}_0}(\tilde{\mu})\le\log{\frac{\delta_2}{\delta_1}},
$
where $\tilde{\mu}\in\mathcal{C}(\mathscr{S}_0)$, and $\alpha_c(\delta_2)$ is attained at $\tilde{\mu}$ (Since $\mathcal{C}(\mathscr{S}_0)$ is closed, the infimum can be reached at some $\tilde{\mu}\in\mathcal{C}(\mathscr{S}_0)$).
By the squeeze theorem, for any positive constant $\delta_0$ and $\delta_1,\delta_2$ with $\delta_0\le\delta_1<\delta_2$, we have $\lim\limits_{|\delta_2-\delta_1|\to 0}|\alpha_c(\delta_2)-\alpha_c(\delta_1)|=0$.
\end{proof}

\begin{proof}[Proof of Theorem \ref{thm_prop_alpha_c} (b)]

Take an edge $e_0$ containing the starting point $x_0$ with $\{e_0\}\in\mathscr{S}_0$. Recalling (\ref{def of renewal time}) for the definition of renewal times, we obtain
$
\mathbb{P}_{x_0}\left(\mathcal{L}^n\in\mathcal{C}(\mathscr{S}_0)\right)\ge \mathbb{P}_{x_0}\left(\tau_2>n,\mathcal{Z}_0|_E=e_0\right),
$
which deduces that (by Theorem \ref{m-exp decay})
\[
-\lim_{n\to\infty}\frac{1}{n}\log\mathbb{P}_{x_0}\left(\tau_2>n,\mathcal{Z}_0|_E=e_0\right)\ge-\lim_{n\to\infty}\frac{1}{n}\log\mathbb{P}_{x_0}
\left(\mathcal{L}^n\in\mathcal{C}(\mathscr{S}_0)\right)=\alpha_c(\delta).
\]
Noting
$\mathbb{P}_{x_0}\left(\tau_2>n,\mathcal{Z}_0|_E=e_0\right)
\ge\frac{1}{\b}\left(\frac{\delta}{\b-1+\delta}\right)^{n-1},$
we have that as $\delta\to \infty,$
$$-\lim_{n\to\infty}\frac{1}{n}\log\mathbb{P}_{x_0}\left(\tau_2>n,\mathcal{Z}_0|_E=e_0\right)\le \log\frac{\b-1+\delta}{\delta}\to 0.$$
By the squeeze theorem, $\lim\limits_{\delta\to\infty}\alpha_c(\delta)=0$.\end{proof}

\begin{proof}[Proof of Theorem \ref{thm_prop_alpha_c} (c)]
By Proposition \ref{property of alpha_c 2}, we only need to prove
\begin{itemize}
	\item[(i)] The statement
	\begin{center}\emph{There are some $E_0\in\mathscr{S}_0$ and some nonempty set $E'\subseteq E_0$ with $E'\cap \partial E_0=\emptyset$ such that $E'$ is the support of the first marginal measure of some invariant measure on $V_S$.}
	\end{center}
       is  equivalent to the following statement
	\begin{center}\emph{
		For some edge $e\in E$ and some $E_0\in\mathscr{S}_0$, $E_e=\{e'\in E:e'\cap e\neq\emptyset \}\subseteq E_0$.}
	\end{center}

	\item[(ii)] \[
	\inf_{(\mu,E_0\in\mathscr{S}_0):\,\text{\rm supp}(\mu|_E)\subseteq E_0\setminus \partial E_0}\Lambda_1(\mu)=
	\inf_{\stackrel{(\nu,\E_0\in\mathscr{S}_0,\hat{q}):\ \nu\in\mathscr{P}(V), \nu\hat{q}=\nu}{ E({\rm supp}(\nu))\subseteq E_0\setminus \partial E_0 }}\int_V R(\hat{q}\|\hat{p})\ {\rm d}\nu.
	\]
\end{itemize}

\noindent{\bf (i)} If the first statement holds, choose an edge $e\in E'$. Since $E'\cap \partial E_0=\emptyset$, $e\notin \partial E_0$, i.e., there is no edge in $E\setminus E_0$ adjacent to $e$. Therefore, $E_e\subseteq E_0$.

If the second statement holds, set $E'=\{uv\}$. We may see that $E'$ is the support of invariant measure $\mu$ with $\mu(\overrightarrow{uv})=\mu(\overrightarrow{vu})=\frac{1}{2}$. Since there is no edge in $E\setminus E_0$ adjacent to $uv$, we obtain $uv\notin \partial E_0$, which implies $E'\cap \partial E_0=\emptyset$.

\noindent{\bf (ii)} By Lemma \ref{inf attainment},
\begin{align}
&\inf_{(\mu,E_0\in\mathscr{S}_0):\,\text{\rm supp}(\mu|_E)\subseteq E_0\setminus \partial E_0}\Lambda_1(\mu)\nonumber\\
&=
\inf_{(\mu,E_0\in\mathscr{S}_0):\,\text{\rm supp}(\mu|_E)\subseteq E_0\setminus \partial E_0}\left(\inf_{\hat{q}\in\mathscr{T}_G:\nu(z^-)\hat{q}(z^-,z^+)=\mu(z)}\int_{V}R(\hat{q}\|\hat{p})\ {\rm d}\nu\right),
\label{eq-lim-alpha-c-1}
\end{align}
where $\nu=T(\mu)$ for any $\mu\in\mathscr{P}(V_S)$. Since for any invariant $\nu\in\mathscr{P}(V)$ and transition kernel $\hat{q}$ with $\nu\hat{q}=\nu$, $\mu(z):=v(z^-)\hat{q}(z^-,z^+)$ is an invariant measure for transition kernel $q(z,z'):=\hat{q}(z^+,(z')^+)$. We simplify
\eqref{eq-lim-alpha-c-1} as
\[
\inf_{\stackrel{(\nu,\E_0\in\mathscr{S}_0,\hat{q}):\ \nu\in\mathscr{P}(V), \nu\hat{q}=\nu}{ \nu(z^-)\hat{q}(z^-,z^+)>0 \text{ only if } z|_E\in E_0\setminus \partial E_0 }}\int_V R(\hat{q}\|\hat{p})\ {\rm d}\nu.
\]
Next we prove that under the condition $\nu\hat{q}=\nu$,
\begin{equation}\label{4.2-longlrarrow}
 E({\rm supp}(\nu))\subseteq E_0\setminus \partial E_0 \Longleftrightarrow
\nu(z^-)\hat{q}(z^-,z^+)>0 \text{ only if } z|_E\in E_0\setminus \partial E_0.
\end{equation}

Firstly, we verify  that the left hand side (LHS) of (\ref{4.2-longlrarrow}) implies the RHS (\ref{4.2-longlrarrow}).

For $z$ with $\nu(z^-)\hat{q}(z^-,z^+)>0$, then $\nu(z^-)>0$ and $\nu(z^+)>0$ by $\nu \hat{q}=\nu$, which implies $z|_E\in E({\rm supp}(\nu))$. Hence  $z|_E\in E_0\setminus \partial E_0$.

Secondly,  by contradiction, we show  that the RHS of (\ref{4.2-longlrarrow}) implies the LHS (\ref{4.2-longlrarrow}). If the first condition does not hold, choose some $uv\in E({\rm supp}(\nu))$ with $uv\notin E_0\setminus \partial E_0$. Set $z=\overrightarrow{uv}$ for convenience. Note that $\nu(z^-),\nu(z^+)>0$, and that $\nu(z^-)\hat{q}(z^-,z^+)=\nu(z^+)\hat{q}(z^+,z^-)=0$ because of $uv\notin E_0\setminus \partial E_0$ and the second condition. We deduce that $\hat{q}(z^-,z^+)=\hat{q}(z^+,z^-)=0$. By $\nu\hat{q}=\nu$, there exist some $z_1,z_2$ with $z_1^+=z^-, z_2^+=z^+$ such that $\nu(z_i^-)\hat{q}(z_i^-,z_i^+)>0$ for $i=1,2$. Thus by the second condition, $z_1|_E,z_2|_E\in E_0\setminus \partial E_0$. Note that $uv=z|_E$ is adjacent to $z_1|_E,z_2|_E$. This contradicts to $uv\notin E_0\setminus \partial E_0$: for $uv\notin E_0$, $z_1|_E,z_2|_E\in \partial E_0$ if $z_1|_E,z_2|_E\in E_0$; for $uv\in \partial E_0$, there is an edge $u'v'\notin E_0$ adjacent to $uv$, assuming $u'v'\cap uv=z^-=z_1^+$, which implies $z_1|_E\in \partial E_0$ if $z_1|_E\in E_0$.

By (\ref{4.2-longlrarrow}), we finally simplify the RHS of \eqref{eq-lim-alpha-c-1} as
$
\inf_{\stackrel{(\nu,\E_0\in\mathscr{S}_0,\hat{q}):\ \nu\in\mathscr{P}(V), \nu\hat{q}=\nu}{ E({\rm supp}(\nu))\subseteq E_0\setminus \partial E_0 }}\int_V R(\hat{q}\|\hat{p})\ {\rm d}\nu.
$
\end{proof}


\begin{appendix}\label{appendices}



	

  \section{Proofs of Proposition \ref{another expression of alpha_c}} \label{sec-pf-prop-3.1}

\begin{proof}

First, we show (\ref{exprs1 alpha_c}).
 Noting that
	\begin{equation}\label{prop4.2-1}
	  \inf_{\mu\in\mathcal{C}(\mathscr{S}_0)}\Lambda_{\delta,\mathscr{S}_0}(\mu)= \inf_{(\mu_k,r_k,E_k)_k\in\bigcup_{\mu\in\mathcal{C}(\mathscr{S}_0)}\mathscr{A}(\mu,\mathscr{S}_0)}\ \left(\inf_{\qq_k\in\mathscr{T}_S:\,\mu_k\qq_k=\mu_k}\sum_{k=1}^\b r_k \int_{V_S} R(\qq_k\|\pp_{E_k})\ {\rm d}\mu_k\right).	
	\end{equation}

	On the one hand,  for any $E_0\in\mathscr{S}_0$, transition kernel $q$ and invariant measure $\mu$ with ${\rm supp}(\mu|_E)\subseteq E_0$, $\mu \qq=\mu$, we set
		\[
	r_k=1 \text{ if } k=|E_0|, \text{ and }
	0 { \rm otherwise.}
	\]
Due to $E_0\in\mathscr{S}_0$, there exists a $v\in V$ such that $\{x, v\}\in E_0$. Thus, by induction for $k\ge |E_0|$ and $k\le |E_0|$ respectively, we can construct a sequence of $\{E_k\}_{1\le k\le \b}\in\mathscr{E}$ such that $E_{|E_0|}=E_0$. Choosing some $\{\mu_k\}_{1\le k\le \b}$ such that $\mu_{|E_0|}=\mu$, ${\rm supp}(\mu_k|_E)\subseteq E_k$ for $1\le k\le \b$, we may see that \[(\mu_k,r_k,E_k)_k\in\cup_{\mu\in\mathcal{C}(\mathscr{S}_0)}\mathscr{A}(\mu,\mathscr{S}_0).\]
	Therefore, $\inf_{\mu\in\mathcal{C}(\mathscr{S}_0)}\Lambda_{\delta,\mathscr{S}_0}(\mu)\le \sum_{k=1}^\b r_k \int_{V_S} R(\qq_k\|\pp_{E_k})\ {\rm d}\mu_k=\int_{V_S} R(\qq\|\pp_{E_0})\ {\rm d}\mu$. By the arbitrariness of $(\mu, E_0, \qq)$, we have
	\begin{equation}\label{prop4.2-3}
	\inf_{\mu\in\mathcal{C}(\mathscr{S}_0)}\Lambda_{\delta,\mathscr{S}_0}(\mu)\le \inf_{(\mu,E_0\in\mathscr{S}_0,\qq):\,\text{\rm supp}(\mu|_E)\subseteq E_0,\mu \qq=\mu}\int_{V_S} R(\qq\|\pp_{E_0})\ {\rm d}\mu.
	\end{equation}

	On the other hand,  using (\ref{prop4.2-1}), we have
	\begin{align}
	&\inf_{\mu\in\mathcal{C}(\mathscr{S}_0)}\Lambda_{\delta,\mathscr{S}_0}(\mu)\nonumber\\
	&\ge \inf_{(\mu_k,r_k,E_k)_k\in\bigcup_{\mu\in\mathcal{C}(\mathscr{S}_0)}\mathscr{A}(\mu,\mathscr{S}_0)}\ \left[\inf_{\qq_k\in\mathscr{T}_S:\,\mu_k\qq_k=\mu_k}\left(\min_{1\le k\le \b: E_k\in\mathscr{S}_0} \int_{V_S} R(\qq_k\|\pp_{E_k})\ {\rm d}\mu_k\right)\right].\label{prop4.2-2}
	\end{align}	
	Noting that if $(\mu_k,r_k,E_k)_k\in\cup_{\mu\in\mathcal{C}(\mathscr{S}_0)}\mathscr{A}(\mu,\mathscr{S}_0)$ with $\mu_k\qq_k=\mu_k$, then  ${\rm supp}(\mu_k|_E)\subseteq E_k$. Therefore, we deduce that for all $k$ with $E_k\in \mathscr{S}_0$,
	\[
	\int_{V_S} R(\qq_k\|\pp_{E_k})\ {\rm d}\mu_k\ge \inf_{(\mu,E_0,\qq):\,E_0\in\mathscr{S}_0,\,\text{\rm supp}(\mu|_E)\subseteq E_0,\,\mu \qq=\mu}\int_{V_S} R(\qq\|\pp_{E_0})\ {\rm d}\mu.
	\]
	Combining with (\ref{prop4.2-2}), we obtain
	\[
	\inf_{\mu\in\mathcal{C}(\mathscr{S}_0)}\Lambda_{\delta,\mathscr{S}_0}(\mu)\ge \inf_{(\mu,E_0\in\mathscr{S}_0,q):\,\text{\rm supp}(\mu|_E)\subseteq E_0,\mu \qq=\mu}\int_{V_S} R(\qq\|\pp_{E_0})\ {\rm d}\mu.
	\]
	(\ref{prop4.2-3}) and (\ref{prop4.2-2}) imply the expression (\ref{exprs1 alpha_c}) of  $\inf_{\mu\in\mathcal{C}(\mathscr{S}_0)}\Lambda_{\delta,\mathscr{S}_0}(\mu)$.\end{proof}

\section{Proof of Lemma \ref{order of Markov process}}\label{apdx-mrkproc}

\begin{proof}
	Without loss of generality, we may consider another Markov process $\{\xi'_n\}_{n\ge0}$ by modifying the transition probability from states in $A$ such that $A$ is an absorbing subset of $\{\xi'_n\}_{n\ge0}$. Then $\P(\tau> n)=\P(\tau'> n)$, where $\tau':=\inf\{n: \xi'_n\in A \}$. Meanwhile, we can assume that each state in $A^c$ is accessible for $\{\xi'_n\}_{n\ge0}$ starting at any state in $A^c$. Otherwise, for fixed starting state, we may just consider those accessible states in $A^c$.
	
	By the proof of part (e) of
	\cite[p.\,72, Theorem 3.1.1]{DZ1998} (the Perron-Frobenius theorem), choosing irreducible $B=(p_{ij})_{A^c\times A^c}$, we obtain that
	\[
	\sum_{j\in A^c} B^n(i,j)\phi_j=\Theta\left(p_0^n\right),\ i\in A^c,
	\]
	where $\phi$ is some measure on $A^c$ with $\phi_j>0$ for each $j\in A^c$, and $p_0\in (0,1)$ is the maximal eigenvalue of $B$,
	and $B^n(i,j)$ is the $(i,j)$-entry of $B^n.$ This implies
	$\P(\tau'>n\vert\xi'_0=i)=\Theta\left(p_0^n\right),\ i\in A^c,$
	which deduces that $\P(\tau>n)=\sum_{i\in A^c}\P(\xi'_0=i)\cdot\P(\tau'>n\vert\xi'_0=i)=\Theta\left(p_0^n\right)$. \end{proof}

\section{Proofs of Proposition \ref{another expression of alpha_c 1}} \label{sec-pf-prop-4.1}

\begin{proof}
	
\noindent\emph{(a).}	Now, it is time to show the expression (\ref{exprs2 alpha_c}) of  $\inf_{\mu\in\mathcal{C}(\mathscr{S}_0)}\Lambda_{\delta,\mathscr{S}_0}(\mu)$. 	By (\ref{exprs1 alpha_c}),
 note that
\begin{align}
\int_{V_S} R(\qq\|\pp_{E_0})\ {\rm d}\mu &=\int_{V_S} R(\qq\|\pp)\ {\rm d}\mu +\int_{V_S} \log{\frac{d(z)-k(z)+k(z)\delta}{d(z)\delta}}\ {\rm d}\mu\nonumber\\
&=\int_{V_S} R(\qq\|\pp)\ {\rm d}\mu +\int_{\{\overrightarrow{uv}: uv\in \partial E_{0}\}} \log{\frac{d(z)-k(z)+k(z)\delta}{d(z)\delta}}\ {\rm d}\mu.\label{(4.21)}
\end{align}
Due to  \cite[Theorem 8.4.3]{DE1997}, $\Lambda_1(\mu)=\inf_{\qq\in\mathscr{T}_S:\,\mu \qq=\mu}\int_{V_S} R(\qq\|\pp)\ {\rm d}\mu$.

Hence we obtain (\ref{exprs2 alpha_c}) by taking $\inf_{(\mu,E_0\in\mathscr{S}_0,\qq):\,\text{\rm supp}(\mu|_E)\subseteq E_0,\mu \qq=\mu}\{\cdot\}$ on both sides of \eqref{(4.21)}.\\

\noindent\emph{(b).}  Set $g(\mu):=\Lambda_{1}(\mu)+\int_{\{\overrightarrow{uv}: uv\in \partial E_{0}\}}\log{\frac{d(z)-k(z)+k(z)\delta}{d(z)\delta}}\ {\rm d}\mu,\ \mu\in\mathscr{P}(V_S).$ By \eqref{exprs2 alpha_c},
\[
\inf_{\mu\in\mathcal{C}(\mathscr{S}_0)}\Lambda_{\delta,\mathscr{S}_0}(\mu)=\inf_{(\mu,E_0):E_0\in\mathscr{S}_0,\,\text{\rm supp}(\mu|_E)\subseteq E_0}g(\mu).
\]
We prove that the infimum of $g(\mu)$ is attained at some $(\mu',E_0)$ with ${\rm supp}(\mu|_E)=E_0$.

The  strategy of proof is to show  for fixed $E_0\in\mathscr{S}_0$ and any $\mu'$ with ${\rm supp}(\mu'|_E)\subsetneq E_0$, there exists $\tilde{\mu}$ with ${\rm supp}(\mu'|_E)\subsetneq{\rm supp}{(\tilde{\mu}|_E})$ such that
\[
g(\tilde{\mu})<g(\mu').
\]
Thus, by recursion,  the finiteness of $E_0$ implies the result that we need. Specifically, we construct a family of probability measure $\{\mu^\varepsilon, 0\le \varepsilon< 1\}\in \mathscr{P}(V_S)$ satisfying:
\begin{itemize}
  \item $\mu^0=\mu'$;
  \item ${\rm supp}(\mu'|_E)\subsetneq{\rm supp}{(\mu^\varepsilon|_E})$, for $0<\varepsilon<1$;
  \item $\lim\limits_{\varepsilon\to 0}\mu^\varepsilon(z)= \mu^0(z), z\in V_S$.
\end{itemize}
$\{\mu^\varepsilon\}$ can be regarded as some perturbation of  $\mu'$. Then, by analyzing the positive and negative properties of the derivative of $g(\mu^\varepsilon)$ with respect to $\varepsilon$ near $\varepsilon=0$, we get  there exists a small $\varepsilon>0$ such that $g(\mu^\varepsilon)<g(\mu')$. For this purpose, we need to a precise expression of $\mu^\varepsilon$ as follows:

Note that \[g(\mu')=\inf_{\qq\in\mathscr{T}_S:\,\mu'\qq=\mu'}\int_{V_S}R(\qq\|\pp)\ {\rm d}\mu'+\int_{\{\overrightarrow{uv}: uv\in \partial E_{0}\}} \log{\frac{d(z)-k(z)+k(z)\delta}{d(z)\delta}}\ {\rm d}\mu',\]
and assume that the infimum of $\inf_{\{\qq\in\mathscr{T}_S:\,\mu'\qq=\mu'\}}\int_{V_S}R(\qq\|\pp)\ {\rm d}\mu'$ is attained at some $\qq'\in\mathscr{T}_S$.  Denote by $V_1',\dots, V_r'$ the absorbing sets of the transition kernel $\qq'$. Set $c_k:=\mu'(V_k')>0$ for $k=1,\cdots, r$, where $\sum_{k=1}^rc_k=1$.   Because  ${\rm supp}(\mu'|_E)\subsetneq E_0$, there must be  an $uv\in E_0$ with $\mu'|_E(uv)=0$ such that $\overrightarrow{uv},\overrightarrow{vu}$ do not belong to all $V'_k,\, k=1,\cdots, r$.

Choose an absorbing set, assumed to be $V_1'$, such that there exists some $u_1v_1\in E_0$ with $\mu'|_E(u_1v_1)=0$ and some $z_0=\overrightarrow{u_0v_0}\in V_1'$ with $z_0\to \overrightarrow{u_1v_1}$ or $z_0\to\overrightarrow{v_1u_1}$ by an edge in $S$. Without loss of generality, assume $z_0\to \overrightarrow{u_1v_1}$.
Let $k'(x)=|\{ y: \qq'(x,y)>0 \}|$. We then construct a  transition probability $\qq^\varepsilon\in\mathscr{T}_S$ for $\varepsilon \in [0,1)$. For convenience, sometimes, we also write $\qq(x,y)$ as $\qq_{xy}$ for some $\qq\in\mathscr{T}_S$.

\begin{itemize}
\item[(a)] If $z_0^*:=\overrightarrow{v_0u_0}\notin V_1'$, set
\[
\qq^\varepsilon_{xy}=\left\{
\begin{array}{ll}
\qq'_{xy}-\varepsilon/k'(x), & x=z_0,\, \qq'_{xy}>0,\\
\varepsilon, & x=z_0,\, y=z_0^*,\\
1, & x=z_0^*,\, y=z_0,\\
\qq'_{xy}, & x\neq z_0,\, z_0^*.
\end{array}
\right.
\]
\item[(b)] If $z_0^*\in V_1'$, set
\[
\qq^\varepsilon_{xy}=\left\{
\begin{array}{ll}
\qq'_{xy}-\varepsilon/k'(x), & x=z_0,\, \qq'_{xy}>0,\\
\varepsilon, & x=z_0,\, y= \overrightarrow{u_1v_1},\\
1, & x=\overrightarrow{u_1v_1},y=\overrightarrow{v_1u_1},\, \text{ or }x=\overrightarrow{v_1u_1},\, y=z_0^*,\\
\qq'_{xy}, & x\neq z_0,\, \overrightarrow{u_1v_1},\overrightarrow{v_1u_1}.
\end{array}
\right.
\]
\end{itemize}

\begin{figure}
		\subfigure[]{\begin{minipage}[t]{0.5\textwidth}
				\centering{\includegraphics[scale=1]{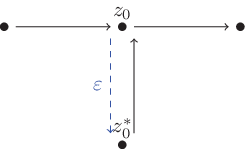}}
		\end{minipage}}
		\subfigure[]{\begin{minipage}[t]{0.5\textwidth}
				\centering{\includegraphics[scale=1]{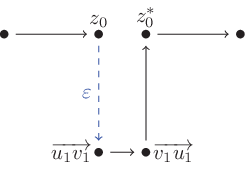}}
		\end{minipage}}
		\caption{\label{lemma_4_9_graph} \small{(a) and (b) stand for the cases (a) and (b) of $\qq^\varepsilon$ respectively.}}
\end{figure}
\noindent (See Figure \ref{lemma_4_9_graph} for an intuition of $\qq^\varepsilon$.)  Let
\[
\ V_1'':=\left\{
\begin{aligned}
&V_1'\cup \{z_0^*\}, &z_0^*\notin V_1',\\
&V_1'\cup \{\overrightarrow{u_1v_1},\overrightarrow{v_1u_1}\}, &z_0^*\in V_1'.
\end{aligned}
\right.
\]

Now, we construct a new probability measure $\mu^\varepsilon\in\mathscr{P}(V_S)$ such that
\begin{equation}\label{4-mu}
  \left\{ \begin{aligned}
    \mu^\varepsilon \qq^\varepsilon=&\,\mu^\varepsilon,  \\
     \mu^\varepsilon(V_1'')=&\, c_1, \\
     \cdots &   \\
     \mu^\varepsilon(V_k')=&\,c_k\ {\rm for}\ k=2,\dots,r.
  \end{aligned}
  \right.
\end{equation}
That is to say, $\mu^\varepsilon$ is an invariant distribution  of $\qq^\varepsilon$ and supports on $V_1''$ with probability $c_1$ and on $V_k^\varepsilon$ with probability $c_r$,  $r=2,\cdots,r$.  By the communicating classes and the property of invariant distribution of Markov chains with finite state spaces,
the above equation (\ref{4-mu}) has a unique solution. In particular,
\begin{itemize}
  \item  if $z\in V_S\setminus V_1''$, $\qq^\varepsilon(z, \cdot)=
  \qq^0(z,\cdot)$, then $\mu^\varepsilon(z)=\mu'(z)$ for any $\varepsilon\ge 0$; 
  \item if $\varepsilon>0$,  then $\mu^\varepsilon(V_1''\setminus V_1')>0$;
  \item if $\varepsilon=0$,  then $\mu^0(V_1''\setminus V_1')=0$ and if $z\in V_1'$, then
$\mu'(z)=\mu^0(z)$.
\end{itemize}
These properties imply
\begin{equation}\label{4-mu-1}
  \mu'=\mu^0\ \  \text{and}\ \  {\rm supp}(\mu^0|_E)\subsetneq{\rm supp}{(\mu^\varepsilon|_E}),\  0<\varepsilon<1.
\end{equation}
Note that for any $k=1,\cdots, r$,  if $x \in V_k'$, $\qq'(x,\cdot)=\qq^0(x,\cdot)$, we have
\begin{equation}\label{4-mu-2}
  \int_{V_S}R(\qq'\|\pp)\ {\rm d}\mu'=\int_{V_S}R(\qq^0\|\pp)\ {\rm d}\mu^0.
\end{equation}
Besides, we only need to consider those vertices in $V_0'$ when proving $g(\mu')>g(\mu^\varepsilon)$.

Moreover,  we have the following three claims:

\begin{itemize}
    \item[{\bf (a)}] $\lim\limits_{\varepsilon\to 0}\mu^\varepsilon(z)= \mu^0(z), z\in V_1''$.
     \item[{\bf (b)}] $\frac{{\rm d} \mu^\varepsilon(z)}{{\rm d}\varepsilon}$ is uniformly bounded for all $z\in V_1''$ and $\varepsilon\in[0,\frac{1}{2}\max_{\{y:\qq_{z_0y}'>0\}}\{\qq_{z_0y}'\cdot k'(z_0)\}]$.
    \item[{\bf (c)}]
    $
    \frac{{\rm d}\int_{V_S} R(\qq^\varepsilon\|\pp)\ {\rm d}\mu^\varepsilon}{{\rm d} \varepsilon} \to-\infty\ \mbox{for}\ \varepsilon\to 0.
    $
\end{itemize}

Set
\[
\tilde{g}(\qq^\varepsilon,\mu^\varepsilon):=\int_{V_S}R(\qq^\varepsilon\|\pp)\ {\rm d}\mu^\varepsilon+\int_{\{\overrightarrow{uv}: uv\in \partial E_{0}\}} \log{\frac{d(z)-k(z)+k(z)\delta}{d(z)\delta}}\ {\rm d}\mu^\varepsilon.
\]
Assume temporarily the above three claims holds.   Claims {\bf (a)}, {\bf (b)} and {\bf (c)} imply that  $\tilde{g}(\qq^\varepsilon,\mu^\varepsilon)$ is not only continuous, but also strictly decreasing in small $\varepsilon$. Therefore, due to  (\ref{4-mu-1}) and  (\ref{4-mu-2}), we obtain that there is an $\varepsilon>0$ such that
\[
g(\mu')=g(\mu^0)=\tilde{g}(\qq^0,\mu^0)> \tilde{g}(\qq^\varepsilon,\mu^\varepsilon) \ge g(\mu^\varepsilon).
\]
This illustrates that the infimum of $g$ must be attained at some $\mu\in\mathscr{P}(V_S)$ with $\mu|_E(\partial E_0)>0$.

Now, let us prove the three claims.\ 

Due to the irreducibility of $\qq^\varepsilon$ on $V_1''$ for any $\varepsilon\in [0,1)$, 
\begin{equation}\label{4-mu-5}
\left\{
 \begin{aligned}
   \sum_{x\in V_1''}\mu^\varepsilon(x)\qq^\varepsilon_{xy}=&\, \mu^\varepsilon(y),\ y\in V_1'',   \\
   \sum_{x\in V_1''}\mu^\varepsilon(x)=&\, c_1.
 \end{aligned}\right.
\end{equation}
has a unique solution which is non-negative. That is to say, for any $\varepsilon\in [0,1)$, $\{\mu^\varepsilon(z)\}_{z\in V_1''}$, which is the solution to Eq. (\ref{4-mu}) restricted on $V_1''$, is uniquely determined by $\{\qq^\varepsilon_{xy}\}_{x,y\in V_1''}$.

In order to verify Claim {\bf (a)}, {\bf (b)} and {\bf(c)}, we have to give the explicit solution to Eq. (\ref{4-mu-5}) through Cr\'amer's rule.  Set matrix $Q^\varepsilon=(\tilde{\qq}_{xy}^\varepsilon)_{x,y\in V_1''}$ with entries
\[
\tilde{\qq}_{xy}^\varepsilon=\left\{\begin{array}{ll}
\qq^\varepsilon_{xy},\ &x\neq y,\,y\neq z_0,\\
-1,\ &x=y\neq z_0,\\
1,\ &y=z_0,
\end{array}
\right.
\]
and $Q_x^\varepsilon$ is the matrix induced from $Q^\varepsilon$ by replacing $\tilde{\qq}_{xy}^\varepsilon$ by $0$ for all $y\neq z_0$ and replacing $\tilde{\qq}_{xz_0}^\varepsilon$ by $c_1$.  Thus,
\[
\mu^\varepsilon(x)=\frac{\text{det}(Q_x^\varepsilon)}{\text{det}(Q^\varepsilon)},\ x\in V_1'',
\]
where $\text{det}(\cdot)$ denotes the determinant of a matrix.
Furthermore,
$$
\mu^\varepsilon(x)=
\left\{ \begin{aligned}
  \frac{c_1\,b}{\sum_{y\in V_0'\setminus\{z_0\}}a_y(\varepsilon)+b}, &\ \ \  x=z_0,\\
\frac{c_1\,a_x(\varepsilon)}{\sum_{y\in V_0'\setminus\{z_0\}}a_y(\varepsilon)+b}, &\ \ \  x\neq z_0.
  \end{aligned}
   \right.
$$
Here $a_x(\varepsilon)$ is the algebraic cofactor of  the entry $\tilde{\qq}_{xz_0}^\varepsilon$ of $Q^\varepsilon$, and $b$ is the algebraic cofactor of the entry $\tilde{\qq}_{z_0z_0}^\varepsilon$ of  $Q^\varepsilon$.  For each $x$,  $a_x(\varepsilon)$ is a linear function of $\varepsilon$;  $b$ is independent of $\varepsilon$.

Moreover,
\[
\sum_{y\in V_0'\setminus\{z_0\}}a_y(0)+b=\text{det}(Q^0)\neq 0
\]
 because of uniqueness and existence of the solution to the following equation:
 $$
  \left\{
 \begin{aligned}
   \sum_{x\in V_1''}\mu^0(x)\qq^0_{xy}=&\, \mu^0(y),\ y\in V_1'',   \\
   \sum_{x\in V_1''}\mu^0(x)=&\, c_1.
 \end{aligned}\right.
 $$

 Hence we demonstrate that $\mu^\varepsilon$ is not only continuous, but also differentiable in $\varepsilon$ with a uniformly bounded derivative for all $z\in V_0'$ and $\varepsilon\in[0,\frac{1}{2}\max_{y\leftarrow z_0}\qq_{z_0y}'\cdot k'(z_0)]$.
This verifies  Claims {\bf (a)} and {\bf (b)}.

Note that
\begin{align*}
\int_{V_S} R(\qq^\varepsilon\|\pp)\ {\rm d}\mu^\varepsilon &=\sum_{x\in V_S}\mu^\varepsilon(x)\sum_{y\in V_S}\log{\frac{\qq^\varepsilon_{xy}}{\pp_{xy}}}\qq^\varepsilon_{xy}\\
&=\sum_{x\in \{\overrightarrow{uv}: uv\in E_{0}\}}\mu^\varepsilon(x)\sum_{y\in \{\overrightarrow{uv}: uv\in E_{0}\}}\left(\qq^\varepsilon_{xy}\log{\qq^\varepsilon_{xy}}-\qq^\varepsilon_{xy}\log{\pp_{xy}}\right),
\end{align*}
and the following facts:
\begin{itemize}
\item $0\le\sum_{y\in V_S}\log{\frac{\qq^\varepsilon_{xy}}{\pp_{xy}}}\qq^\varepsilon_{xy}\le \sup_{x\in V_S}\sum_{y\in V_S, y\leftarrow x}\log\frac{1}{\pp_{xy}}<\infty$.
\item For $x\neq z_0$, $\frac{{\rm d} \qq^\varepsilon_{x\cdot}}{{\rm d} \varepsilon}=0$, and hence
$
\frac{{\rm d}(\qq^\varepsilon_{xy}\log{\qq^\varepsilon_{xy}}-\qq^\varepsilon_{xy}\log{\pp_{xy}})}{{\rm d}\varepsilon}=0.
$
\item For $x=z_0$, $y\in V_1'$ with $z_0\to y$,
$$
\frac{{\rm d} (\qq^\varepsilon_{xy}\log{\qq^\varepsilon_{xy}})}{{\rm d}\varepsilon}=-\frac{1}{k'(z_0)}\cdot\left(\log{\left(\qq'_{xy}-\varepsilon/k'(z_0)\right)}+1\right),
\text{ and } \frac{{\rm d}(\qq^\varepsilon_{xy}\log{\pp^\varepsilon_{xy}})}{{\rm d}\varepsilon}=-\frac{1}{k'(z_0)}\log{\pp_{xy}}.$$
Thus,
$
\lim_{\varepsilon\to 0}\frac{{\rm d}(\qq^\varepsilon_{xy}\log{\qq^\varepsilon_{xy}}-\qq^\varepsilon_{xy}\log{\pp_{xy}})}{{\rm d}\varepsilon}
\ \mbox{exists}.
$
\item For $x=z_0$, $y\in V_1''\setminus V_1'$ with $z_0\to y$,
$$
\frac{{\rm d} (\qq^\varepsilon_{xy}\log{\qq^\varepsilon_{xy}})}{{\rm d}\varepsilon}
=1+\log{\varepsilon}\to -\infty\ \mbox{as}\ \varepsilon\to 0.
$$
Since $\frac{{\rm d}(\qq^\varepsilon_{xy}\log{\pp^\varepsilon_{xy}})}{{\rm d}\varepsilon}=\log{\pp_{xy}}$, we have that
$
\frac{{\rm d}(\qq^\varepsilon_{xy}\log{\qq^\varepsilon_{xy}}-\qq^\varepsilon_{xy}\log{p_{xy}})}{{\rm d}\varepsilon} \to -\infty\ \mbox{as}\ \varepsilon\to 0.
$
\end{itemize}
Combining $\mu^0(z_0)>0$ with  Claim {\bf (b)}, we deduce Claim {\bf (c)} immediately. 	
\end{proof}


\section{Proofs of Proposition \ref{property of alpha_c 2}} \label{sec-pf-prop-4.2}

Before our proof of Proposition \ref{property of alpha_c 2}, for any $\mu\in\mathscr{P}(V_S)$ and some $E_0\in\mathscr{S}_0$, we show two properties:
\begin{align}
&\partial (\{\overrightarrow{uv}: uv\in E_{0}\})\subseteq\{\overrightarrow{uv}: uv\in \partial E_{0}\},\label{prop-partial-1}\\
&\mu(\{\overrightarrow{uv}: uv\in \partial E_{0}\})>0 \text{ implies } \mu(\partial (\{\overrightarrow{uv}: uv\in E_{0}\}))>0.\label{prop-partial-2}
\end{align}

Firstly, we verify property \eqref{prop-partial-1}. For $z=\overrightarrow{u_0v_0}\in\partial (\{\overrightarrow{uv}: uv\in E_{0}\})$, there is some $\tilde{z}\notin \{\overrightarrow{uv}: uv\in E_{0}\}$ such that $z\to \tilde{z}$. Note that $\tilde{z}|_E\notin E_0$, and that $\tilde{z}|_E$ is adjacent to $z|_E=u_0v_0$. Therefore, $u_0v_0\in\partial E_0$, which implies $z\in\{\overrightarrow{uv}: uv\in \partial E_{0}\}$. Since we select $z$ arbitrarily, we obtain \eqref{prop-partial-1}.

Secondly, we verify property \eqref{prop-partial-2}. We claim that for any $z=\overrightarrow{u_0v_0}\in \{\overrightarrow{uv}: uv\in \partial E_{0}\}$,
\[z\in \partial (\{\overrightarrow{uv}: uv\in E_{0}\})\mbox{, or } z^*:=\overrightarrow{v_0u_0}\in\partial (\{\overrightarrow{uv}: uv\in E_{0}\}).\]
This is because $u_0v_0\in \partial E_0$, i.e., there is some $u'v'\notin E_0$ adjacent to $u_0v_0$. Note that $\overrightarrow{u'v'},\overrightarrow{v'u'}\notin \{\overrightarrow{uv}: uv\in E_{0}\}$, and that vertex $u_0v_0\cap u'v'$ belongs to $\{z^+,z^-\}$. If $u_0v_0\cap u'v'=z^+$, then $z\to\overrightarrow{u'v'}$ or $z\to\overrightarrow{v'u'}$, which deduces $z\in \partial (\{\overrightarrow{uv}: uv\in E_{0}\})$. Otherwise, $z^*\to\overrightarrow{u'v'}$ or $z^*\to\overrightarrow{v'u'}$, which deduces $z^*\in \partial (\{\overrightarrow{uv}: uv\in E_{0}\})$.

Here we prove $\mu(\partial( \{\overrightarrow{uv}: uv\in E_{0}\}))>0$ by contradiction. If $\mu(\partial( \{\overrightarrow{uv}: uv\in E_{0}\}))=0$, then there is some $z=\overrightarrow{u_0v_0}\in \{\overrightarrow{uv}: uv\in \partial E_{0}\}$ with $\mu(z)>0$ such that $z^*=\overrightarrow{v_0u_0}\in \partial( \{\overrightarrow{uv}: uv\in E_{0}\})$, i.e., there is some $\tilde{z}\notin \{\overrightarrow{uv}: uv\in E_{0}\} $ with $z^*\to \tilde{z}$. Since $\mu$ is an invariant measure, there exists some $z'\to z$ such that $\mu(z')>0$. Noting that $(z')^+=z^-=(z^*)^+=\tilde{z}^-$, we deduce $z'\to \tilde{z}$, i.e., $z'\in \partial( \{\overrightarrow{uv}: uv\in E_{0}\})$, which contradicts to $\mu(\partial( \{\overrightarrow{uv}: uv\in E_{0}\}))=0$.

With properties \eqref{prop-partial-1} and \eqref{prop-partial-2}, we start the proof of Proposition \ref{property of alpha_c 2}.


\begin{proof}[Proof of Proposition \ref{property of alpha_c 2}]
Suppose firstly that for all nonempty sets $E',E_0$ with $E'\subseteq E_0\in\mathscr{S}_0$ and $E'\cap \partial E_0\neq \emptyset$, there is some invariant measure $\nu$ with $\text{supp}(\nu\vert_1)=E'$. In this case, for any invariant measure $\mu$ and $E_0\in\mathscr{S}_0$ with $\text{supp}(\mu|_E)\subseteq E_0$, we have $\mu(\{\overrightarrow{uv}: uv\in \partial E_{0}\})>0$. This implies $\mu(\partial( \{\overrightarrow{uv}: uv\in E_{0}\}))>0$ by \eqref{prop-partial-2}.

\noindent {\bf (a)\ \boldmath $\lim\limits_{\delta\to 0}\alpha_c(\delta)=+\infty$ }
Here we use the expression $\alpha_c(\delta)=\inf_{\mu\in\mathcal{C}(\mathscr{S}_0)}\Lambda_{\delta,\mathscr{S}_0}(\mu)$. Consider $\delta<1$. For each $\mu\in\mathcal{C}(\mathscr{S}_0)$, we assume $\Lambda_{\delta,\mathscr{S}_0}(\mu)$ is attained at $(\mu_{k},r_k,E_k)_{1\le k\le \b}$. Note that
\begin{align*}	
R\left(\qq_{k}(x,\cdot)\|\pp(x,\cdot)\right)-R\left(\qq_{k}(x,\cdot)\|\pp_{E_k}(x,\cdot)\right)
=\int_{V_S} \log{\frac{\pp_{E_k}(x,y)}{\pp(x,y)}}\qq_{k}(x,{\rm d}y),	
\end{align*}
where $\log{\frac{\pp_{E_k}(x,y)}{\pp(x,y)}}\le \log{\frac{d\delta}{(d-1)\delta+1}}$ for $\delta<1$ and $x\in\partial (\{\overrightarrow{uv}:uv\in E_k\})$. Meanwhile, for each measurable set $A$ and all $k\le N:=\sup\{k:\ E_k\in\mathscr{S}_0\}$, then
\begin{align*}
\sum_{k=1}^{N}r_k\mu_{k}(A)&=\mu(A),\\
\mu(\{\overrightarrow{uv}: uv\in \partial E_{N}\})&\ge\min_{z\in\text{supp}(\mu)}\mu(z)>0,\\
\mu_k(\partial(\{\overrightarrow{uv}: uv\in E_{k}\}))&\ge\mu_k(\partial(\{\overrightarrow{uv}: uv\in \partial E_{N}\})).
\end{align*}
The third inequality above follows from that \[\partial(\{\overrightarrow{uv}: uv\in E_{N}\})\subseteq \partial(\{\overrightarrow{uv}: uv\in E_{k}\})\cup\left(\{\overrightarrow{uv}: uv\in E_{k}\}\right)^c,\]
and ${\rm{supp}}(\mu_k)=\{\overrightarrow{uv}: uv\in E_{k}\}.$ Thus
\begin{align*}
\Lambda_1(\mu)-\Lambda_{\delta,\mathscr{S}_0}(\mu)&\le\sum_{k=1}^{N}r_k\mu_{k}(\partial(\{\overrightarrow{uv}:uv\in E_k  \}))\log{\frac{\b\delta}{(\b-1)\delta+1}}\\
&\le\mu(\partial (\{\overrightarrow{uv}: uv\in E_{N}\}))\log{\frac{\b\delta}{(\b-1)\delta+1}}\\
&\le \min_{z\in\text{supp}(\mu)}\mu(z)\log{\frac{\b\delta}{(\b-1)\delta+1}}.
\end{align*}
Since $\Lambda_1(\mu)\ge 0$ and $\log{\frac{\b\delta}{(\b-1)\delta+1}}\to-\infty\ (\delta\to 0),$ we get $\lim\limits_{\delta\to 0}\Lambda_{\delta,\mathscr{S}_0}(\mu)=+\infty.$

By the monotonicity of $\alpha_c(\delta)$, $\lim\limits_{\delta\to0}\alpha_c(\delta)$ exists. Assume $\lim\limits_{\delta\to0}\alpha_c(\delta)<+\infty.$ Then there are some sequence $(\delta_n,\mu_n)_{n\geq 1}\subset (0,1)\times\mathscr{P}(V_S)$ and some $M>0$ with $\Lambda_{\delta_n,\mathscr{S}_0}(\mu_n)\le M$ and $\delta_n\downarrow 0.$ Since $\mathscr{P}(V_S)$ is compact, and $\mathcal{C}(\mathscr{S}_0)$ is closed, we know that $\mathcal{C}(\mathscr{S}_0)$ is also compact, and that there are a subsequence $(\mu_{n_k})_{k\geq 1}$ of $(\mu_n)_{n\geq 1}$ and a $\mu\in\mathcal{C}(\mathscr{S}_0)$ with $\mu_{n_k}\Rightarrow \mu$.
For all $\varepsilon>0$ and $j\geq 1$, by the lower semicontinuity of $\Lambda_{\delta_{n_j},\mathscr{S}_0}$, there exists some $K_j>j$ such that for all $i>K_j$, $\Lambda_{\delta_{n_j},\mathscr{S}_0}(\mu_{n_i})\ge\Lambda_{\delta_{n_j},\mathscr{S}_0}(\mu)-\varepsilon$. Hence, by the decreasing of $\Lambda_{\delta,\mathscr{S}_0}$ in $\delta$,
$$
\Lambda_{\delta_{n_i},\mathscr{S}_0}(\mu_{n_i})\ge\Lambda_{\delta_{n_j},\mathscr{S}_0}(\mu_{n_i})
\ge\Lambda_{\delta_{n_j},\mathscr{S}_0}(\mu)-\varepsilon.
$$
By taking $\liminf\limits_{i\to\infty}$ and then letting $\epsilon\to 0$, we obtain that \[\Lambda_{\delta_{n_j},\mathscr{S}_0}(\mu)\le\liminf_{i\to\infty}\Lambda_{\delta_{n_i},\mathscr{S}_0}(\mu_{\delta_{n_i}})\le M,\ j\geq 1,\]
which contradicts to $\lim\limits_{\delta\to0}\Lambda_{\delta,\mathscr{S}_0}(\mu)=+\infty$ due to $\delta_{n_j}\downarrow 0\ (j\uparrow\infty).$

Now we turn to the opposite case.

\noindent {\bf (b)\ \boldmath $\lim\limits_{\delta\to 0}\alpha_c(\delta)<+\infty$ }
Choose $E_0\in \mathscr{S}_0$ and any $\mu\in\mathscr{P}(V_S)$ such that
$${\rm supp}(\mu\vert_1)=E',\ E'\subseteq E_0,\ E'\cap \partial E_0=\emptyset.$$
Then for any transition probability $\qq$ with $\mu \qq=\mu$,
\begin{equation}
\int_{V_S} R(\qq\|\pp_{E_0})\ {\rm d}\mu = \int_{V_S} R(\qq\|\pp)\ {\rm d}\mu. \label{(4.15)}
\end{equation}
This implies $\alpha_c(\delta)\le \inf_{\qq:\,\mu \qq=\mu}\int_{V_S} R(\qq\|\pp)\ {\rm d}\mu=\Lambda_1(\mu)<\infty$. Therefore,
\[\alpha_c(\delta)\le b:=\inf_{(\mu,E_0\in\mathscr{S}_0):\,\text{\rm supp}(\mu|_E)\subseteq E_0\setminus \partial E_0}\Lambda_1(\mu)<\infty.\]
By the expression (\ref{exprs1 alpha_c}) in Proposition \ref{another expression of alpha_c}, the infimum is attained at some $\mu_1$ with $\mu_1|_E(\partial E_0)>0$.
In this case, (\ref{proof of monotonicity of alpha_c}) still holds, namely $\alpha_c(\delta)$ is strictly decreasing in $\delta.$
Clearly $\lim_{\delta\to0}\alpha_c(\delta)\le b$. For any integer $n\geq 2$, assume the infimum of $\alpha_c(1/n)$ is attained at $\mu_n$ and $E_n\in\mathscr{S}_0$. Set $f_n(z)=\log{\frac{d(z)-k(z)+k(z)/n}{d(z)/n}}$. Note that $f_n(z)=0$ if $z\notin \partial(\{\overrightarrow{uv}: uv\in E_{0}\})$, and that $\partial(\{\overrightarrow{uv}: uv\in E_{0}\})\subseteq \{\overrightarrow{uv}: uv\in \partial E_{0}\}$ by \eqref{prop-partial-1}.
By (\ref{exprs2 alpha_c}), \[\alpha_c(1/n)=\Lambda_1(\mu_n)+\int_{\{\overrightarrow{uv}: uv\in \partial E_{n}\}}f_n\ {\rm d}\mu_n=\Lambda_1(\mu_n)+\int_{\partial( \{\overrightarrow{uv}: uv\in E_{n}\})}f_n\ {\rm d}\mu_n,\]
where $\mu_n|_E(\partial E_n)>0$, and hence $\mu_n(\partial (\{\overrightarrow{uv}: uv\in E_{n}\}))>0$. Since $S$ is finite, there is an strictly increasing subsequence $\{n_k\}_{k\geq 1}$ of natural numbers such that $\mu_{n_k}\Rightarrow \mu$ for some $\mu\in\mathscr{P}(V_S)$ and $E_{n_k}\equiv E_0$ for some $E_0\in \mathscr{S}_0$, where $\text{supp}(\mu|_E)\subseteq E_0.$ Note for all $z\in\partial (\{\overrightarrow{uv}: uv\in E_{0}\})$, $\lim_{j\uparrow\infty}f_{n_j}(z)=\infty.$ Then, for any $M>0$, when $k$ is large enough, $f_{n_k}(z)\geq M,\ z\in\partial (\{\overrightarrow{uv}: uv\in E_{0}\});$ and further
\begin{align*}
\lim_{\delta\to 0}\alpha_c(\delta)&=\lim_{k\to\infty}\alpha_c(1/{n_k})=\lim_{k\to\infty}\left\{\Lambda_1(\mu_{n_k})+\int_{\partial (\{\overrightarrow{uv}: uv\in E_{0}\})}f_{n_k}\ {\rm d}\mu_{n_k}\right\}\\
&\ge\liminf_{k\to\infty}\left\{\Lambda_1(\mu_{n_k})+M\mu_{n_k}(\partial (\{\overrightarrow{uv}: uv\in E_{0}\}))\right\}\\
&\ge\Lambda_1(\mu)+M\mu(\partial (\{\overrightarrow{uv}: uv\in E_{0}\})).
\end{align*}
This forces $\mu(\{\overrightarrow{uv}: uv\in \partial E_{0}\})=0$. Otherwise $\mu(\partial (\{\overrightarrow{uv}: uv\in E_{0}\}))>0$ by \eqref{prop-partial-2}, which implies $b\geq \lim\limits_{\delta\to 0}\alpha_c(\delta)=\infty.$ Therefore, $\text{supp}(\mu|_E)\subseteq E_0,\text{supp}(\mu|_E)\cap \partial E_0=\emptyset,$ and $\lim\limits_{\delta\to 0}\alpha_c(\delta)\ge\Lambda_1(\mu)\geq b.$ This completes the proof.\end{proof}

\end{appendix}

	{}

\end{document}